\newcommand{\blue}{}
\newcommand{\rd}{{\mathrm{d}}}
\newtheorem{theorem}{Theorem}%
\newtheorem{prop}[theorem]{Proposition}%
\newtheorem{lemma}[theorem]{Lemma}%
\newtheorem{cor}[theorem]{Corollary}%
\newtheorem{remark}{Remark}%
\newtheorem{definition}{Definition}%
\begin{document}

\title[Measure-Theoretic Time-Delay Embedding]{Measure-Theoretic Time-Delay Embedding}

\author[1]{\fnm{Jonah} \sur{Botvinick-Greenhouse}}\email{jrb482@cornell.edu}
\equalcont{These authors contributed equally to this work.}

\author[1]{\fnm{Maria} \sur{Oprea}}\email{mao237@cornell.edu}
\equalcont{These authors contributed equally to this work.}

\author[2]{\fnm{Romit} \sur{Maulik}}\email{rmaulik@psu.edu}

\author*[3]{\fnm{Yunan} \sur{Yang}}\email{yunan.yang@cornell.edu}

\affil[1]{\orgdiv{Center for Applied Mathematics}, \orgname{Cornell University}, \orgaddress{\street{657 Frank H.T.~Rhodes Hall}, \city{Ithaca}, \postcode{14850}, \state{NY}, \country{USA}}}

\affil[2]{\orgdiv{College of Information Sciences and Technology}, \orgname{Pennsylvania State University}, \orgaddress{\street{E397 Westgate Building}, \city{University Park}, \postcode{16802}, \state{PA}, \country{USA}}}

\affil*[3]{\orgdiv{Department of Mathematics}, \orgname{Cornell University}, \orgaddress{\street{310 Malott Hall}, \city{Ithaca}, \postcode{14850}, \state{NY}, \country{USA}}}

\abstract{
The celebrated Takens' embedding theorem provides a theoretical foundation for reconstructing the full state of a dynamical system from partial observations. However, the classical theorem assumes that the underlying system is deterministic and that observations are noise-free, limiting its applicability in real-world scenarios. Motivated by these limitations, we formulate a measure-theoretic generalization that adopts an Eulerian description of the dynamics and recasts the embedding as a pushforward map between spaces of probability measures. Our mathematical results leverage recent advances in optimal transport. Building on the proposed measure-theoretic time-delay embedding theory, we develop a computational procedure that aims to reconstruct the full state of a dynamical system from time-lagged partial observations, engineered with robustness to handle sparse and noisy data. We evaluate our measure-based approach across several numerical examples, ranging from the classic Lorenz-63 system to real-world applications such as NOAA sea surface temperature reconstruction and ERA5 wind field reconstruction.
}

\keywords{dynamical systems,  time-delay embedding, optimal transport ,  state reconstruction}

\maketitle

\section{Introduction}
Dynamical systems provide a universal language for modeling the temporal evolution of complex systems, appearing across a diverse range of scientific disciplines, including physics, biology, chemistry, ecology, and social sciences. A dynamical system comprises of a space (denoted by $M$) defining the possible states ($x$) of the system and a rule describing the evolution of these states over time ($t$). Understanding the behavior of complex dynamics allows for accurate predictions of future states based on historical data, which is crucial in fields such as weather prediction, financial market analysis, and epidemiology \citep{schneider2010water,ghil2020physics,lyneis2000system,ye2015distinguishing,botvinick2023learning,anderson2004epidemiology,khyar2020global}.

In practice, the exact equations governing a system's behavior are often unknown, and one may have to study the system's evolution through empirically collected time series data. This indirect interaction with the system's full dynamics is frequently complicated by experimental limitations preventing complete measurement of the full state. For instance, only the first coordinate of a $d$-dimensional state $x$ may be observed. Such partial and potentially limited observational data makes it challenging to accurately model the system's dynamics.

In situations where the full state is not directly observable, time-delay embedding has become a fundamental technique in the analysis of dynamical systems. This method involves concatenating time-lagged versions of a scalar time-dependent observation into a high-dimensional state vector, resulting in a system that is topologically equivalent to the full, unobserved dynamics.  The celebrated Takens' embedding theorem supplies the theoretical foundation for time-delay embedding and has inspired a myriad of works over the last four decades that perform data-driven analysis on partially observed nonlinear systems~\citep{Takens1981DetectingSA, kim1999nonlinear}. Notable applications of time-delay embedding include forecasting~\citep{bakarji2022discovering, young2023deep}, noise reduction~\citep{grassberger1993noise, kirtland2023unstructured}, control~\citep{nitsche1992controlling}, as well as various biological studies~\citep{ye2015distinguishing, raut2023arousal, smith2011introduction}. The theory of time-delay embeddings has been extended beyond classical reconstruction to address problems of prediction and modeling of dynamical evolution~\cite{takens2002reconstruction}. It has been shown that reliable short-term forecasts can be achieved even when embeddings contain self-intersections~\cite{schroer1998predicting}, while subsequent work established a rigorous mathematical framework for prediction from such non-injective embeddings~\cite{baranski2024prediction}. More recently, practical tests for assessing topological conjugacy directly from time series have been proposed~\cite{dlotko2024testing}.

However, Takens' embedding theorem assumes that the underlying system follows precise, predictable rules without random perturbations and that observables are measured perfectly without errors. In practical scenarios, both the underlying system and the observables are subject to intrinsic and extrinsic noise. Intrinsic noise refers to the inherent unpredictability within the system, such as thermal fluctuations or quantum effects, while extrinsic noise includes measurement errors, environmental disturbances, and observational limitations--external factors that can affect the data. These sources of randomness challenge the idealized assumptions of Takens' theorem when modeling partially observed dynamical systems.

Given these practical challenges, it is necessary to consider variants of time-delay embedding that incorporate assumptions of randomness. Notably, it was shown in \citep{sauer1991embedology}  that the time-delay map is an embedding almost surely in the space of observation functions, and in \citep{probabilistic1} that the delay map formed by a polynomially perturbed observation function is an embedding at almost all points. However, these works do not address intrinsic or extrinsic noise in the state itself. On the other hand, \cite{casdagli1991state} leveraged statistical techniques to quantify the effect of i.i.d.~extrinsic noise on state reconstruction and time-series prediction. Moreover, the works \cite{stoch_takens, noise2} focus on the embedding properties of individual Lagrangian trajectories subject to randomness and forcing.

\begin{figure}
\centering
\subfloat[Pointwise time-delay embedding]{ \includegraphics[width = .8\linewidth]{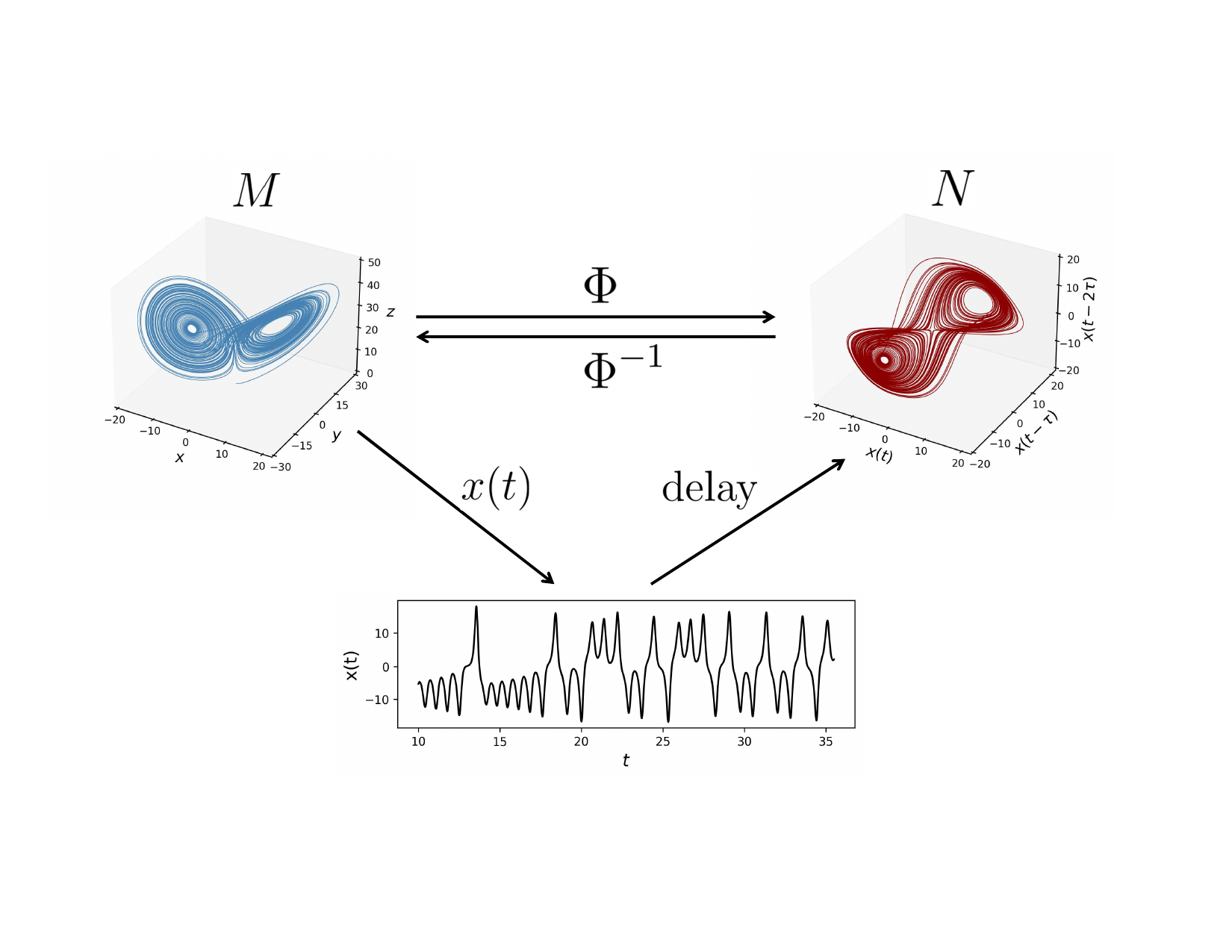}}\label{fig:point_embedding}\\
\subfloat[Measure-theoretic time-delay embedding]{ \includegraphics[width = .95\linewidth]{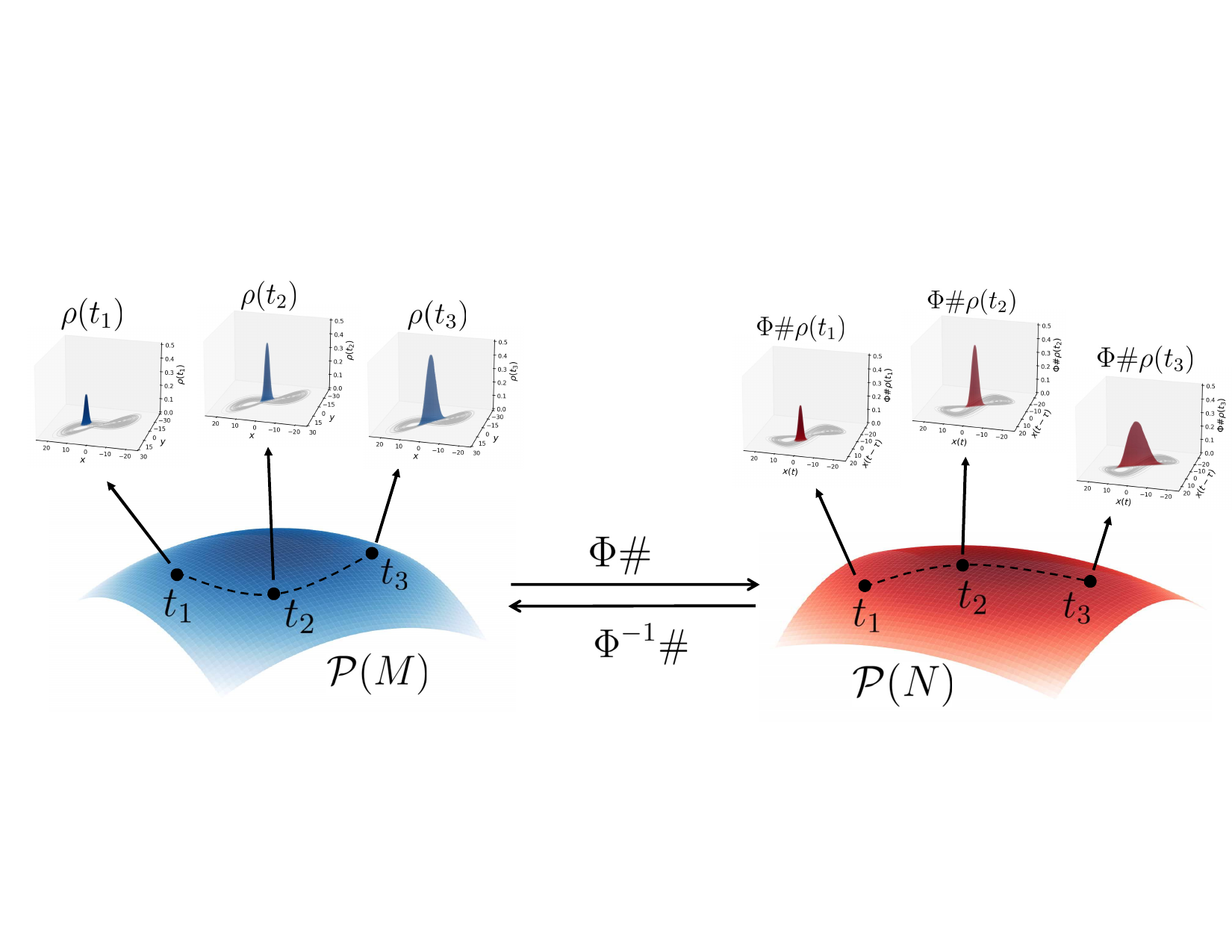}}
    \caption{Illustration of the differences between the classical pointwise time-delay embedding (a) and the proposed measure-theoretic time-delay embedding (b).\label{fig:main}}
    \label{fig:prob_embedding}
\end{figure}

In this work, we take a different approach by \textbf{lifting} both the domain ($M$) and the co-domain ($N$) of an embedding map $\Phi:M\to N$ to the space of probability measures over $M$ and $N$, respectively, denoted by $\mathcal{P}(M)$ and $\mathcal{P}(N)$. These two infinite-dimensional probability spaces are connected by pushforward maps acting on probability measures (see~Fig.~\ref{fig:main}). One main contribution of our work is to rigorously study the embedding property between $\mathcal{P}(M)$ and $\mathcal{P}(N)$ under an Eulerian description of the dynamics. We establish the existence of a \textbf{smooth, one-to-one, and structure-preserving} map that translates fluid/gas flows represented as probability distributions over the state coordinates into their counterparts characterized in the time-delay coordinates. Moreover, this probabilistic embedding map is precisely the pushforward action of the original Lagrangian embedding map $\Phi$. The theory of optimal transport~\citep{vilani_topics} plays a critical role in the analysis, supplying essential mathematical tools such as the differentiability of maps between probability spaces and tangent space structures.

Our second main contribution is leveraging the measure-theoretic time-delay embedding theory to establish a robust computational framework for learning the inverse embedding function ($\Phi^{-1}$) from data. This function, known as the full-state reconstruction map, typically has no analytical form but is crucial for {reconstructing} the complete state of a nonlinear system from partially observed data. To learn the reconstruction map, we first extract empirical measures in both the full-state space ($M$) and the delay space ($N$) based on a single time trajectory. We then select a suitable metric over the probability space as the objective function and enforce that the pushforwards of the empirical measures in the delay space match the corresponding measures in the full-state space. In contrast with our approach, existing methods commonly learn the reconstruction map using the pointwise mean-squared error loss~\citep{bakarji2022discovering, raut2023arousal, young2023deep}. When the observed data is sparse and noisy, the accuracy of such pointwise methods can be compromised, while our measure-theoretic approach {demonstrates better robustness}. In Section~\ref{sec:experiments}, we showcase the effectiveness of our proposed approach through various numerical tests on classic synthetic examples, such as the Lorenz-63 and Lotka--Volterra systems, as well as large-scale real-world applications, including the reconstruction of NOAA Sea Surface Temperature~\citep{AnImprovedInSituandSatelliteSSTAnalysisforClimate} and ERA5 wind speed datasets from partial observations~\citep{https://doi.org/10.1002/qj.3803}.

{There have been recent advances in leveraging delay-coordinate liftings to apply linear tools to nonlinear dynamics, including kernel integral operators that concentrate the Koopman point spectrum, the forced-linear HAVOK decomposition, and a Koopman--Takens theorem for linear least-squares prediction in the infinite-delay limit \cite{das2019delay,brunton2017chaos,koltai2024koopman}. In parallel, measure-theoretic approaches employ optimal transport to construct transfer operators and develop reduced-order models via displacement interpolation and augmentation \cite{koltai2021transfer,khamlich2025optimal}. Our framework links these efforts by \emph{lifting delay embedding at the functional level}: we pass from trajectories to probability measures, model evolution as pushforwards on the probability space, and obtain a smooth, one-to-one distributional embedding. This Eulerian lifting complements Koopman/transfer-operator analyses and yields stable reconstruction and forecasting frameworks under finite, noisy, and sparsely sampled time-lagged observations.
}

The rest of the paper is organized as follows. Section~\ref{sec:background} reviews the essential background on Takens' theorem and optimal transport. In Section \ref{sec:measure_embedding}, we present our main theoretical result, which generalizes Takens' theorem to the space of probability distributions. In Section~\ref{sec:experiments}, we introduce our computational framework for learning the inverse embedding map from data and demonstrate its robustness across several synthetic and real data examples. Conclusions follow in Section~\ref{sec:conclusions}.

\section{Background and Overview}\label{sec:background}

Essential mathematical notations 
are summarized in Table \ref{tabel:notation}.

\subsection{Takens' Embedding Theorem}
Many physical processes can be modeled by systems of ordinary differential equations (ODEs), which can be represented in the form $\dot{x} = v(x)$, where $x \in M$ is the state, $M$ is a smooth compact $d$-dimensional manifold and $v$ is a $\mathcal{C}^2$ vector field on $M$.  Given an initial condition $x\in M$, we denote the solution to the ODE by $\{\phi_t(x)\}_{t\geq 0}$, which is often referred to as the trajectory. Moreover, $\phi_t:M\to M$ is known as the time-$t$ flow map. The trajectory provides critical information about the underlying dynamical system and is useful in various engineering and data science applications, such as parameter identification and model reduction \citep{benner2015survey,brunton2016discovering}. However, when the state dimension $d$ is large, it is often the case that experimentalists are unable to directly access the trajectory $\{\phi_t(x)\}_{t\geq 0}$, but instead have access to time-series projections of the form $\{h(\phi_t(x))\}_{t \geq 0},$ where $h\in \mathcal{C}^2(M,\mathbb{R})$ is an observation function. Thus, it is essential to understand to what extent the time series $\{h(\phi_t(x))\}_{t  \geq 0}$ can provide information on the full trajectory $\{\phi_t(x)\}_{t \geq 0}$. 

Takens' embedding theorem (1981) establishes criteria under which the partial observations of a dynamical system can be used to reconstruct the full state. This reconstruction is possible because the original trajectory $\{\phi_t(x)\}_{t \geq 0}$ and the time-lags of the partially observed trajectory $\{h(\phi_t(x))\}_{t \geq 0}$ are related through a structure-preserving diffeomorphism, known as an embedding. A precise definition is given in Definition \ref{def:embeddin} below.
\begin{definition}[Embedding]\label{def:embeddin}
    Let $M, N$ be two differentiable manifolds. Then a function $f:M \to N$ is an embedding if $f$ is a diffeomorphism such that the derivative operator $df_x:T_xM \to T_{f(x)}N$ is injective $\forall x \in M$.
\end{definition}
The statement of Takens' theorem is given in Theorem~\ref{thm:classic_takens}.

\begin{theorem}[Takens' Embedding Theorem]\label{thm:classic_takens}
    Let $\tau > 0$, choose $m \geq 2d+1$, and suppose that $v$ satisfies the following:
    \begin{enumerate}
        \item[(i)] If $v(x) = 0$, then the {eigenvalues of $(d\phi_\tau)_x:T_xM \rightarrow T_xM$ are all different, and none of them equals $1$.}
        \item[(ii)] No periodic integral curve of $v$ has period equal to $k\tau$ for $k \in \{1, \dots, m\}$.
    \end{enumerate}
    Then, it is a generic property that the delay coordinate map given by
\begin{equation}\label{eq:delay}
    \Phi_{h, \phi_{\tau}}(x):= \left(h(x),h(\phi_{\tau}(x)),\dots ,h\left(\phi_{(m-1)\tau}(x)\right) \right)  \in \mathbb{R}^m\,,
\end{equation}
is an embedding.
\end{theorem}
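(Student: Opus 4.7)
The plan is to show that the set of observation functions $h \in \mathcal{C}^2(M,\mathbb{R})$ for which $\Phi_{h,\phi_\tau}$ is an embedding contains a residual set (countable intersection of open dense sets) in the Whitney $\mathcal{C}^2$ topology. Since $M$ is compact, any injective immersion into $\mathbb{R}^m$ is automatically an embedding in the sense of Definition~\ref{def:embeddin}, so the task reduces to two genericity statements: (a) for generic $h$, the derivative $(D\Phi_{h,\phi_\tau})_x$ is injective at every $x \in M$, and (b) for generic $h$, the map $\Phi_{h,\phi_\tau}$ itself is injective on $M$. Openness of each of these conditions follows from compactness of $M$ together with $\mathcal{C}^2$ continuity of the operations $h \mapsto h\circ\phi_{k\tau}$; the substantive work is density.

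For the immersion property (a), I would partition $M$ into three regimes and handle each with a jet-transversality argument. On the open set of \emph{regular} points $x$ where $x, \phi_\tau(x), \dots, \phi_{(m-1)\tau}(x)$ are all distinct, condition (ii) guarantees that these $m$ points are pairwise distinct, so one can independently perturb the $1$-jet of $h$ at each of them; the rows of the $m\times d$ Jacobian of $\Phi_{h,\phi_\tau}$ can therefore be driven to a generic position of rank $d$, which is achievable because $m \geq 2d+1 \geq d$. At \emph{fixed points} of $\phi_\tau$, the rows of the Jacobian have the structured form $Dh_x \circ (d\phi_\tau)_x^k$ for $k = 0,\dots,m-1$, and one cannot perturb $Dh_x$ separately along different orbit iterates. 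The spectral hypothesis (i), namely that $(d\phi_\tau)_x$ has distinct eigenvalues none of which equals $1$, lets us diagonalize and reduce to a Vandermonde-type condition: a generic choice of $Dh_x$ places the initial covector nontrivially in each eigendirection, and the resulting Krylov sequence spans $T_x^*M$. On \emph{short periodic orbits} of period $k\tau$ with $k \leq m-1$, condition (ii) simply rules them out, so this case is vacuous.

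For the injectivity property (b), I would apply Thom multijet transversality to the evaluation map on $M\times M \setminus \Delta_M$ sending $(x,y)$ to $(\Phi_{h,\phi_\tau}(x), \Phi_{h,\phi_\tau}(y)) \in \mathbb{R}^m \times \mathbb{R}^m$, and ask that it be transverse to the diagonal $\Delta_{\mathbb{R}^m}$. The source has dimension $2d$, while $\Delta_{\mathbb{R}^m}$ has codimension $m$ in $\mathbb{R}^{2m}$, so the condition $m \geq 2d+1$ forces the generic preimage to be empty, yielding global injectivity off of the diagonal. To push this to the diagonal itself, one needs local injectivity, which is supplied by the immersion property (a) away from fixed points, and near fixed points by the spectral condition in (i) that excludes the eigenvalue $1$, thereby guaranteeing that $\phi_\tau$ has no trivial linearization and that $\Phi_{h,\phi_\tau}$ locally separates points.

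The main obstacle, in my view, is the careful bookkeeping around fixed points and short periodic orbits: at these points the iterates $\phi_{k\tau}(x)$ are not distinct, so one loses the freedom to perturb the jet of $h$ independently along the orbit, and standard transversality does not apply directly. Hypotheses (i) and (ii) are engineered precisely to restore enough algebraic independence (via distinct eigenvalues in the Krylov sense) and to exclude the remaining degenerate orbits, respectively. Once density is established on each stratum, a standard Baire-category intersection combined with the openness of the embedding condition in the $\mathcal{C}^2$ topology on the compact manifold $M$ completes the proof.
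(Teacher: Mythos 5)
First, a point of comparison: the paper does not prove this statement. Theorem~\ref{thm:classic_takens} is quoted as the classical result of Takens (1981) with a citation, and the paper's original contributions (Theorem~\ref{thm:main_embedding}, Corollary~\ref{cor:1}) are built on top of it. So there is no in-paper proof to measure your sketch against; I can only assess it against the known arguments in the literature.

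Your overall architecture --- openness from compactness of $M$, density split into an immersion statement and an injectivity statement, and a Vandermonde/Krylov argument at fixed points exploiting the distinct-eigenvalue hypothesis (i) --- is the standard strategy and is sound in outline. The genuine gap is in step (b). The multijet transversality argument you invoke for the two-point evaluation map $(x,y)\mapsto(\Phi_{h,\phi_\tau}(x),\Phi_{h,\phi_\tau}(y))$ presupposes that the two delay vectors can be perturbed independently by perturbing $h$. But $\Phi_{h,\phi_\tau}(x)$ reads $h$ at the $m$ points $x,\phi_\tau(x),\dots,\phi_{(m-1)\tau}(x)$, and the delay windows of $x$ and $y$ can overlap even when $x\neq y$ and neither point is fixed or periodic: the basic case is $y=\phi_{j\tau}(x)$ for some $1\le j\le m-1$, where the two vectors share $m-j$ of the sampled $h$-values and any perturbation of $h$ moves them in a correlated way. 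The set of such pairs is $d$-dimensional, is not excluded by hypotheses (i) or (ii), and is precisely where your parenthetical "bookkeeping" concern should be located --- not only at fixed and periodic points. Closing this case (showing that the telescoping structure of $\Phi(x)-\Phi(\phi_{j\tau}(x))$ along the single orbit segment of $m+j$ points still provides enough freedom to achieve transversality to the diagonal) is the crux of Takens' actual proof and of the detailed write-ups of it; the bare codimension count $m>2d$ applied to an unconstrained evaluation map does not deliver it.

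Two smaller remarks. In (a), your claim that condition (ii) makes the short-periodic-orbit stratum vacuous holds only under the reading in which a closed orbit of minimal period $T$ "has period" $jT$ for every $j\ge 1$; under a minimal-period reading, an orbit of minimal period $\tau/2$ satisfies $\phi_{\tau}(x)=x$ yet is not excluded, and it would defeat the independent-perturbation argument. You should fix the reading or strengthen the hypothesis. Second, the role of "no eigenvalue equals $1$" in (i) is not, as you suggest, to give local injectivity near fixed points; it is to make the zeros of $v$ nondegenerate as fixed points of $\phi_\tau$, hence isolated and (by compactness) finite in number, which is what licenses treating them as a finite exceptional set in the density argument.
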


\begin{table}
    \centering
\begin{tabular}{ll}
\hline
    Notation & Meaning\\ \hline
    $M$ & Compact $d$ dimensional manifold \\ \hline
    $\phi_t$ & The flow map of a dynamical system on $M$ \\ \hline
    $h$ & The observation function $h: M \rightarrow \mathbb{R}$\\ \hline
    $\tau$ & The delay parameter belonging to $\mathbb{R}_{> 0}$ \\ \hline
   $m$ & Dimension of the delay embedding space\\ \hline
   $\rho$, $\rho_0$ &  Measures on $M$ \\ \hline
   $\Phi_{h, \phi_{\tau}}$ & Takens' delay embedding map \\ \hline
   $\Psi_{h, \phi_{\tau}}$ & The delay embedding map for distributions \\ \hline
   $\rho_t$ & Curve of measures starting at  $\rho_0 = \rho$ \\ \hline
   $|\rho_t'|$ & Metric derivative of $\rho_t$ according to Definition \ref{def:metric_derivative}\\ \hline
   $\frac{d}{dt}\rho_t = v_t$ & Tangent vector field along the curve $\rho_t$ as in Definition \ref{def:vector_field}\\ \hline
   $\mathcal{F}(X, Y)$ & The space of measurable functions from $X$ taking values in $Y$\\ 
   \hline
\end{tabular}
\caption{A list of mathematical notations in Sections~\ref{sec:background} and~\ref{sec:measure_embedding}.}\label{tabel:notation}
\end{table}

In \eqref{eq:delay}, $m\in \mathbb{N}$ is known as the embedding dimension, and $\tau > 0$ is the so-called time-delay. By ``generic'' we mean that the collection of observation functions $h\in \mathcal{C}^2(M,\mathbb{R})$ for which~\eqref{eq:delay} defines an embedding is open and dense in the $\mathcal{C}^2$ topology. In particular, when \eqref{eq:delay} is an embedding, the delayed trajectory $\{\Phi_{h, \phi_{\tau}}(\phi_t(x))\}_{t\geq 0}$ is topologically equivalent to the original orbit $\{\phi_t(x)\}_{t\geq 0}\subseteq M$. %
This perspective has motivated the use of delay coordinates in various applications, such as time-series prediction~\citep{farmer1987predicting}, attractor reconstruction~\citep{pecora2007unified}, causality detection~\citep{sugihara2012detecting}, and noise reduction~\citep{grassberger1993noise,kirtland2023unstructured}.

Often, dynamical systems asymptotically approach a compact attractor $A$ with fractal dimension $d_A \ll d$. Ideally, the embedding dimension should depend on $d_A$ rather than $d$, in order to ensure that the attractor $A$ is embedded using the delay map. In~\citep{sauer1991embedology}, Takens' theorem is generalized along these lines when the flow map $\phi_t$ is defined on an open subset $U$ of Euclidean space.

In a practical setting, only the time-series projection $\{h(\phi_t(x))\}_{t\geq 0}$ is available, and neither $d$ nor $d_A$ is known a priori. Additionally, the time series may be corrupted by noise. Therefore, optimally selecting the embedding dimension $m\in \mathbb{N}$ and the time delay $\tau > 0$ from this limited information is crucial  for ensuring the usefulness of the delay map in applications. Various data-driven techniques have been explored to determine these parameters numerically from the time series $\{h(\phi_t(x))\}_{t\geq 0}$, including False Nearest Neighbors~\citep{rhodes1997false}, Cao's method~\citep{cao1997practical}, mutual information~\citep{fraser1986independent,martin2024robust}, and approaches based on persistent homology~\citep{tan2023selecting}. In this work, we will assume that the time delay $\tau$ and the embedding dimension $m$ have already been chosen using these techniques.

\subsection{Optimal Transport and the Wasserstein Geometry}

A central goal of this work is to lift the statement of Takens' embedding theorem (Theorem~\ref{thm:classic_takens}) to the space of probability measures over the underlying manifold $M$. Using tools from optimal transport theory, we will rigorously define the equivalent notions, in a measure-theoretic setting, to those presented in Theorem~\ref{thm:classic_takens}. 

The space of probability measures on $M$ is given by $\mathcal{P}(M) = \{\mu \in \mathcal{B}(M) : \mu(M) = 1\}$, where by $\mathcal{B}(M)$ we denote the space of all Borel measures over $M$. Any map $h:M \rightarrow N$ can be lifted to a map from $\mathcal{P}(M) $ to $\mathcal{P}(N)$ by the pushforward operator defined below.
    \begin{definition}[The pushforward operator {\cite[Sec.~5.2]{gradient_flows}}]
        The pushforward operator lifts maps from $M$ to $N$ to maps between the equivalent spaces of probability measures and is defined by
        \begin{equation}
            h \# \mu (B) = \mu(h^{-1}(B)), \ \forall h \in \mathcal{F}(M , N), \,\, \forall B \in \mathcal{B}(N)\,.
        \end{equation}
        Equivalently, 
         $
             \int_M r(h(x))\, \rd \mu(x) = \int_N r(x) \, \rd(h\#\mu) (x)$,
         for every bounded, Borel measurable function $r:N \rightarrow \mathbb{R}$. %
    \end{definition}
    
  \subsubsection{The Continuity Equation}
    For the rest of this paper, we will restrict our study to the space of probability measures that have finite second-order moments, $\mathcal{P}_2(M) = \{\mu \in \mathcal{P}(M) \ : \ \int |x|^2 \,\rd\mu < \infty\}$. In this space, we can use the differential structure of the quadratic Wasserstein metric.  Let us consider a curve through $\mathcal{P}_2(M)$, which will be the Eulerian equivalent to the Lagrangian flow $\phi_t$ on $M$ in Theorem~\ref{thm:classic_takens}. 
    
    A first requirement for $\phi_t$ to be a flow is continuity. In the context of flows on $\mathcal{P}_2(M)$, the equivalent notion is absolute continuity. To achieve this, we require $M$ to be a metric space and assume $\mathfrak{D}:\mathcal{P}_2(M) \times \mathcal{P}_2(M) \rightarrow \mathbb{R}_{\geq 0}$ is a distance between probability measures. In particular, if $M$ is a smooth compact manifold, there always exists a metric for $M$ and we can view $M$ as a metric space. 

    \begin{definition}[Absolute continuity of curves and maps~{\cite[Definition~1.1.1]{gradient_flows}}]\ \vspace{-0.4cm}
    \begin{enumerate}
        \item We say a curve $\rho_t:[0, 1]\rightarrow \mathcal{P}_2(M)$ is absolutely continuous if $\mathfrak{D}(\rho_t, \rho_s) \leq \int_t^s f(x)\,\rd x$, where $f$ is an $L^1$ function from $[0, 1]$ to $\mathbb{R}$. %
        \item A map $F: \mathcal{P}_2(M) \rightarrow \mathcal{P}_2(N)$ is absolutely continuous if for every absolutely continuous curve $\rho_t \in \mathcal{P}_2(M)$, $F(\rho_t)$ is absolutely continuous in $\mathcal{P}_2(N)$, up to redefining $t \mapsto \rho_t$ on a set of zero Lebesgue measure on $[0,1]$. 
    \end{enumerate}
    \end{definition}
    We will use the shorthand notation ``AC'' for ``absolute continuous'' hereafter. A useful property of AC curves is that they are metric differentiable. Later, we will employ the metric derivative to bound the norm of tangent vectors (see Proposition \ref{prop:vectors}). 
    \begin{definition}[Metric derivative~{\cite[Theorem~1.1.2]{gradient_flows}}]\label{def:metric_derivative}
         For any AC curve $\rho_t \in \mathcal{P}_2(M)$, the limit
         \begin{equation}\label{eq:metric_derivative}
             |\rho_t'| = \lim_{s \to t}\frac{\mathfrak{D}(\rho_t, \rho_s)}{|t - s|}
         \end{equation}
    exists a.e.~in $t$, and the map $t\mapsto|\rho_t'|$ is $L^1$. We call $|\rho_t'|$ the metric derivative of $\rho_t$.
    \end{definition}

   In Theorem~\ref{thm:classic_takens}, the flow $\phi_t$ is generated by a smooth vector field $v:M \rightarrow TM$, such that at every point, $\frac{d}{dt}\phi_t(x) = v(x)$. In contrast, in the Wasserstein space,  the curve $\rho_t$ is generated by a square-integrable vector field
       \begin{align*}
              v_t &\in L^2(TM, \rho_t) = \Big\{v_t: M \rightarrow TM,\, t \in [0, T],\, \\ &\|v_t\|_{L^2(\rho_t)}:= \sqrt{\int_M g_x(v_t(x), v_t(x))\rd \rho_t(x) } < \infty\Big\},
       \end{align*}
    where $g_x$ is the Riemannian metric on $M$. Note that $v_t(x) \in T_xM$ and $g_x: T_xM \times T_xM\rightarrow \mathbb{R}$. 
   This vector field $v_t$ has to additionally satisfy the continuity equation
\begin{equation}\label{eq:continuity}
   \frac{\partial \rho_t}{\partial t} + \nabla\cdot (v_t\rho_t) = 0\,,
   \end{equation}
in the distributional sense, i.e.,
\begin{equation}\label{eq:continuity_dist}
       \int_0^T  \int_M \left(\frac{\partial \varphi(x, t)}{\partial t} + v_t(x)\cdot \nabla  \varphi(x, t)\right) \rd \rho_t(x) \rd t = 0,
   \end{equation}
   for all $\varphi \in \mathcal{D}(M \times [0, T])\,$
  where $\mathcal{D}(M \times [0, T])$ denotes the space of test functions on $M \times [0, T]$ (see Remark \ref{rmk:test_functions}).  This continuity equation comes from the conservation of mass. In the rest of the paper, when we say that the continuity equation is satisfied, we mean~\eqref{eq:continuity_dist} holds. Theorem~\ref{thm:ODE to PDE} below shows that any AC curve is a solution to~\eqref{eq:continuity_dist} for an appropriate choice of $v_t$~\cite[Chapter 8]{gradient_flows}. %

   \begin{theorem}\label{thm:ODE to PDE}
        Let {$v_t\in L^2(TM, \mu_t)$} be a Borel vector field such that: 
    \begin{enumerate}
        \item[(i)] $\int_0^T \int_M |v_t(x)| d\mu_t(x) \,\rd t < \infty$\,,
        \item[(ii)] $\int_0^T \sup\limits_{{B\subset M}} \left( |v_t| + \text{Lip}(v_t, B)\right)\, \rd t < \infty$\,,
    \end{enumerate}
    where $\text{Lip}(v_t, B)$ denotes the Lipschitz constant of $v_t$ on the set {$B \subset M$}. Let $\phi:M \times [0, T]\rightarrow M$  be the solution to the ODE 
\begin{equation}\label{eq:ODE}
        \phi(x, 0) = x, \qquad \frac{d}{dt}\phi(x, t) = v_t\left(\phi(x, t)\right).    \end{equation}
We will denote $\phi_t(x):= \phi(x, t) $. Let $I(x)\subset \mathbb{R}$ be the time interval on which solutions to~\eqref{eq:ODE} starting with initial position $x$ exist. Suppose that $[0, T] \subseteq I(x)$. Then,  
    \begin{enumerate}
        \item[a.] (Existence) $\rho_t = \phi_t \# \rho_0$ solves the continuity equation~\eqref{eq:continuity} with initial condition $\rho_0$. 
    \item[b.] (Uniqueness) Given any $\rho \in \mathcal{P}_2(M)$, $\rho_t = \phi_t \#\rho$  is the unique solution to the continuity equation with the initial condition $\rho$. 
    \end{enumerate}
   \end{theorem}
\begin{remark}
    This theorem establishes the connection between the deterministic dynamical system on $M$ and its lifted version on $\mathcal{P}_2(M)$. Thus, it enables us to lift the differential structure to $\mathcal{P}_2(M)$ and show that tangent vectors indeed exist. For more details see Section \ref{sec:tangent_space}.
\end{remark}
    \begin{remark}\label{rmk:test_functions}
        Without loss of generality, we will work with test functions that are compactly supported and continuously differentiable on $M \times [0, T]$, i.e., $\mathcal{D}(M\times [0, T]) = \mathcal{C}^1_c(M \times [0, T])$. This gives us sufficient regularity to define all the distributional {derivatives of $\rho$ and $v_t$ (see~\cite[Remark 8.1.1]{gradient_flows}). No generality is lost because, } if \eqref{eq:continuity_dist} holds for any $\varphi \in \mathcal{D}(M \times [0, T])$, it also holds for $\varphi \in \mathcal{C}^\infty_c(M \times [0, T])$. Similarly, for time-independent test functions, we can choose $\mathcal{D}(M) = \mathcal{C}^1_c(M)$. The same extends to test functions for $N$ and $N \times [0, T]$. 
    \end{remark}
    \subsubsection{The Tangent Space of $\mathcal{P}_2(M)$}\label{sec:tangent_space}
    Hereafter, we assume $(M, g)$ is a Riemannian manifold and $g$ is the Riemannian metric. The tangent space to a manifold at a specific point is comprised of all possible infinitesimal directions of motion starting from that point. The continuity equation~\eqref{eq:continuity} tells us how a measure evolves in time along the direction specified by a vector field $v_t$, and Theorem \ref{thm:ODE to PDE} verifies the existence of such directions.
    Hence, intuitively, we may define $T_\rho\mathcal{P}_2(M)=L^2(TM, \rho)$ as the tangent space to $\mathcal{P}_2(M)$ at $\rho$. However, not every element in $L^2(TM, \mu)$ generates a different flow in $\mathcal{P}_2(M)$. 
\begin{remark}[Non-uniqueness]\label{rmk:non uniqueness}
    Fix an AC curve $\rho_t$ and consider $v_t, w_t \in L^2(TM, \rho_t)$ where $\nabla \cdot (w_t \rho_t) = 0$ and $v_t$ satisfies  $\partial_t \rho_t + \nabla \cdot (v_t \rho_t) = 0$ (the existence of $v_t$ is guaranteed by~Theorem \ref{thm:ODE to PDE}). Then one can check that $\rho_t$ also satisfies $\partial_t \rho_t + \nabla \cdot ((v_t + w_t) \rho_t) = 0$. Hence, any two vector fields that differ by a $\rho_t$-weighted divergence-free component produce the same solution to the continuity equation. We say a vector field $v$ is  $\rho$-weighted divergence-free if $\nabla \cdot (v \rho) = 0$ holds in the distributional sense, i.e., $\forall \varphi \in \mathcal{D}(M): \int_M g_x \left( \nabla\varphi(x),  v(x) \right)\, \rd \rho(x) = 0$. A time-dependent vector field $v_t$ is divergence free if the above holds for a.e. $t$. 
\end{remark}
To choose a unique element that specifies the infinitesimal direction in $\mathcal{P}_2(M)$, we need to project out the $\rho$-weighted divergence-free component. This leads to the following definition of the Wasserstein tangent space.
\begin{definition}[Wasserstein tangent space]\label{def:tangent_space} The three definitions below are equivalent~\citep{vilani_old_and_new}.
\begin{enumerate}
    \item  The tangent space  $T_\rho
    \mathcal{P}_2(M) = L^2(TM, \rho)/\sim$, where $v\sim w \iff \forall \varphi \in \mathcal{D}(M)$,
    \begin{equation}\label{eq:equality2}
         \int_M g_x \left( \nabla\varphi(x),  v(x)-w(x) \right)\, \rd \rho(x) = 0.
    \end{equation}
    \item $ T_\rho\mathcal{P}_2(M) = \{v \in L^2(TM, \rho): \|v + w\|_{L^2(\rho)} \geq \|v\|_{L^2(\rho)}$, $\forall w\in L^2(TM, \rho) \text{ such that } \nabla\cdot (w\rho) = 0\}$.
    \item $T_\rho\mathcal{P}_2(M)$ is the closure of the space of all gradients of test functions on $M$ in $L^2_{\rho}$, i.e.,
    $$T_\rho \mathcal{P}_2(M) = \overline{\{\nabla \varphi, \ \varphi \in C_c^\infty(M)\}}^{L^2_{\rho}}.$$
\end{enumerate}
\end{definition}
\begin{remark}[Equality in $T_\rho\mathcal{P}_2(M)$]\label{rmk:equality}
    Based on Definition~\ref{def:tangent_space},  $v, w\in T_\rho \mathcal{P}_2(M)$ are equal if~\eqref{eq:equality2} holds. Equivalently, $v \neq w$  in $T_\rho \mathcal{P}_2(M)$ if
    $\exists \varphi \in \mathcal{D}(M)$ such that $$
    \int_M g_x(\nabla\varphi(x), v(x) - w(x)) \rd\rho(x) \neq 0\,.
    $$
\end{remark}

\subsubsection{The Projection Operator}
For any $w \in L^2(TM, \rho)$, one can obtain an element of $T_\rho\mathcal{P}_2(M)$ by projection: 
\begin{equation}\label{eq:projection}
w  = v + v^{\perp},\,\,\,  P^\rho(w):=v \in T_\rho \mathcal{P}_2(M),\,\,\, v^\perp \in T^\perp_\rho \mathcal{P}_2(M),\,
\end{equation}
where $T_\rho^\perp\mathcal{P}_2(M)$ is the orthogonal complement of $T_\rho \mathcal{P}_2(M)$ with respect to the $\rho$-weighted $L^2$ inner product, and $P^\rho: L^2(TM, \rho) \rightarrow T_\rho \mathcal{P}_2(M)$ is the projection operator. 
We will use this projection operator to obtain the unique vector field defining the derivative of a map between probability spaces. The kernel of $P^\rho$ comprises all $\rho$-weighted divergence-free vector fields $w$ such that $\nabla\cdot (\rho w) = 0$ in the distributional sense. One important property of $P^\rho$ is given in Proposition~\ref{prop:integral_not_changed}, whose complete proof is in Appendix~\ref{sec:proofs}.

\begin{prop}\label{prop:integral_not_changed}
    Let $(M, g)$ be a Riemannian manifold. Then for all $v\in L^2(TM,\rho)$ and any $\varphi \in\mathcal{D}(M)$,
    $$ \int _M g_x(\nabla\varphi(x) , P^\rho v(x))\rd \rho(x) = \int_M g_x(\nabla\varphi(x), v)\rd \rho(x).$$
\end{prop}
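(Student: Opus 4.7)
The plan is to decompose $v$ using the projection in~\eqref{eq:projection}, exploit linearity of the integral, and show that the orthogonal-complement piece contributes nothing because it is exactly $\rho$-weighted divergence-free in the distributional sense.

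Concretely, I would start by writing $v = P^\rho v + v^\perp$ with $P^\rho v \in T_\rho \mathcal{P}_2(M)$ and $v^\perp \in T_\rho^\perp \mathcal{P}_2(M)$, so that
\[
    \int_M g_x(\nabla \varphi(x), v(x))\, \rd \rho(x) = \int_M g_x(\nabla \varphi(x), P^\rho v(x))\, \rd \rho(x) + \int_M g_x(\nabla \varphi(x), v^\perp(x))\, \rd \rho(x).
\]
It then suffices to show that the last integral vanishes for every $\varphi \in \mathcal{D}(M)$. This is where I would invoke characterization (3) of Definition~\ref{def:tangent_space}: since $T_\rho \mathcal{P}_2(M)$ is the $L^2_\rho$-closure of $\{\nabla \psi : \psi \in C_c^\infty(M)\}$, any element $v^\perp$ of its orthogonal complement satisfies $\int_M g_x(\nabla \psi, v^\perp)\, \rd \rho = 0$ for every $\psi \in C_c^\infty(M)$. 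Equivalently, $\nabla \cdot (v^\perp \rho) = 0$ in the distributional sense against smooth compactly supported test functions.

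The one step requiring care is extending this vanishing from $C_c^\infty(M)$ to the slightly larger class $\mathcal{D}(M) = C_c^1(M)$ used in the statement (cf.~Remark~\ref{rmk:test_functions}). I would handle this by a standard mollification argument on the manifold: given $\varphi \in C_c^1(M)$, approximate it by $\varphi_n \in C_c^\infty(M)$ so that $\nabla \varphi_n \to \nabla \varphi$ uniformly on a common compact set containing all supports. Since $v^\perp \in L^2(TM, \rho)$ and $\rho(M) = 1$, Cauchy--Schwarz gives
\[
    \left| \int_M g_x(\nabla \varphi - \nabla \varphi_n, v^\perp)\, \rd\rho \right| \leq \|\nabla \varphi - \nabla \varphi_n\|_{L^2(\rho)} \, \|v^\perp\|_{L^2(\rho)} \longrightarrow 0,
\]
so passing to the limit in $\int_M g_x(\nabla \varphi_n, v^\perp)\, \rd \rho = 0$ yields the desired identity for all $\varphi \in \mathcal{D}(M)$.

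The main conceptual obstacle is not computational but definitional: one must be careful that ``$\rho$-weighted divergence-free'' (as stated in Remark~\ref{rmk:non uniqueness}) is the same as being orthogonal to gradients of test functions in $L^2_\rho$, which is exactly what characterization (3) guarantees. Once that identification is made, the proposition reduces to a one-line orthogonal decomposition plus the density extension above.
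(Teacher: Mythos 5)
Your proposal is correct and follows essentially the same route as the paper: decompose $v = P^\rho v + v^\perp$, use linearity, and observe that the orthogonal-complement piece is $\rho$-weighted divergence-free and hence pairs to zero with $\nabla\varphi$. The only difference is that you add an explicit mollification step to pass from $C_c^\infty(M)$ to $C_c^1(M)$ test functions, a point the paper absorbs into its convention (Remark~\ref{rmk:test_functions}) by defining divergence-freeness directly against $\mathcal{D}(M)$; your extra care is harmless and slightly more self-contained.
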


\subsubsection{Metric Change-of-Variables Formula}
We review the following result from Riemannian geometry, which will be used to show the injectivity of the metric derivative operator (see the proof of Theorem~\ref{thm:main_embedding}). A complete proof appears in Appendix~\ref{sec:proofs}. 
\begin{prop}\label{prop:change_of_vars}
    Let $(M, g)$ and $(N, q)$ be Riemannian manifolds, $f:M\rightarrow N$ be a differentiable map %
    and $\varphi \in \mathcal{D} (N)$. Then for all $X\in TM$,
    \begin{equation}\label{eq:change_of_vars}
     g_x(\nabla(\varphi\circ f)(x), X(x)) = q_{f(x)}\Big( (\nabla\varphi)(f(x)), df_x X(x) \Big),
    \end{equation}
where $df_x:T_xM \to T_{f(x)}N$ is the derivative operator of $f$ at position $x$.
\end{prop}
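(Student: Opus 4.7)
The plan is to unwind both sides of \eqref{eq:change_of_vars} using the Riemannian definition of the gradient and then invoke the chain rule to identify them. Recall that on a Riemannian manifold $(N,q)$, the gradient of a smooth function $\varphi$ is the unique vector field $\nabla\varphi$ that represents the differential $d\varphi$ through the metric: for every $y \in N$ and every $Y \in T_y N$,
\begin{equation*}
q_y\bigl((\nabla\varphi)(y),\,Y\bigr) \;=\; d\varphi_y(Y).
\end{equation*}
The analogous identity holds on $(M,g)$ for the pullback $\varphi\circ f \in \mathcal{D}(M)$ (which is differentiable because $f$ is differentiable and $\varphi \in \mathcal{D}(N)$).

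First, I would apply this defining property on the left-hand side of \eqref{eq:change_of_vars} with the test function $\varphi \circ f$ and the tangent vector $X(x) \in T_x M$, giving
\begin{equation*}
g_x\bigl(\nabla(\varphi\circ f)(x),\,X(x)\bigr) \;=\; d(\varphi\circ f)_x\bigl(X(x)\bigr).
\end{equation*}
Next I would use the chain rule for differentials of smooth maps between manifolds, namely $d(\varphi\circ f)_x = d\varphi_{f(x)} \circ df_x$, to rewrite the right-hand side as $d\varphi_{f(x)}\bigl(df_x X(x)\bigr)$. Finally, I would apply the gradient identity on $(N,q)$ at the point $y = f(x)$ with $Y = df_x X(x) \in T_{f(x)}N$ to convert the differential back into an inner product, yielding exactly $q_{f(x)}\bigl((\nabla\varphi)(f(x)),\,df_x X(x)\bigr)$. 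Chaining these three equalities proves the claim.

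There is essentially no obstacle here; the statement is a direct consequence of (i) the metric dual characterization of the gradient and (ii) the functoriality of the differential. The only thing to be careful about is that the statement makes sense: $\varphi$ need only be $\mathcal{C}^1$ (which is true for $\varphi \in \mathcal{D}(N) = \mathcal{C}^1_c(N)$ per Remark \ref{rmk:test_functions}), and $f$ need only be differentiable at $x$ for the chain rule to hold at $x$, so the hypotheses of the proposition are exactly enough. No global arguments, compactness, or approximation are required—this is a pointwise identity.
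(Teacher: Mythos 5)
Your proposal is correct and follows essentially the same route as the paper: apply the metric-dual characterization $g_x(\nabla r, X) = dr(X)$ to $r = \varphi\circ f$, use the chain rule $d(\varphi\circ f)_x = d\varphi_{f(x)}\circ df_x$, and then convert back via the gradient identity on $(N,q)$ at $y = f(x)$. No gaps; this matches the paper's argument step for step.
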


\section{Measure-Theoretic Time-Delay Embedding}\label{sec:measure_embedding}
In this section, we establish our main theoretical result on the measure-theoretic time-delay embedding.
In the classic Takens' embedding (Theorem~\ref{thm:classic_takens}), there are three key components: (1) the flow $\phi_t$ generated by the vector field $v$, (2) the observable $h$, and (3)~the notion of an embedding. To extend Takens' embedding to probability measures, we will find the equivalent objects to each of these three components in the space of probability distributions. 

\subsection{Differentiable Curves in $\mathcal{P}_2(M)$}
On the Lagrangian level, given an initial condition $x_0$, the flow $\phi_t(x_0)$ generates a differentiable curve $x(t) = \phi_t(x_0)$ in $M$.  Similarly, on the Eulerian level, we want to have  a differentiable curve $\rho_t$ and a vector field $v_t \in T_{\rho_t}\mathcal{P}_2(M)$ such that they satisfy the continuity equation~\eqref{eq:continuity}. To begin with, we define the notion of a vector field along a curve in $\mathcal{P}_2(M)$.

\begin{definition}[Vector fields along the curves]\label{def:vector_field}
Consider a curve $\rho_t \in \mathcal{P}_2(M)$. We say that $v_t:[0, 1] \rightarrow T_{\rho_t}\mathcal{P}_2(M)$ is a vector field along the curve $\rho_t$ if the tuple $(\rho_t, v_t)$ satisfies the continuity equation~\eqref{eq:continuity}.  If such a vector field exists, we denote it by $v_t:= \frac{d}{dt}\rho_t$.

\end{definition}

This allows us to define differentiable curves in $\mathcal{P}_2(M)$.

\begin{definition}[Differentiable curve]
    A curve $\rho_t$ in $\mathcal{P}_2(M)$ is differentiable if there exists a vector field $v_t \in T_{\rho_t}\mathcal{P}_2(M)$ along $\rho_t$ such that $\int_0^1\|v_t\|_{L^2(\rho_t)} \rd t< \infty$.  %
\end{definition}

Under this definition, the differentiable curve $\rho_t$ and the vector field $v_t$ become the analogous notions to the flow $\phi_t$ and vector field $v$ from the classical setting. We conclude this subsection by establishing its connection to AC curves. 

\begin{lemma}\label{lem:diff_curve}
    Any AC curve in $\mathcal{P}_2(M)$ is differentiable. 
\end{lemma}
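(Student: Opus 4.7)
The plan is to produce the required vector field by first obtaining \emph{some} vector field solving the continuity equation along $\rho_t$, and then projecting it onto the Wasserstein tangent space to get uniqueness and the right integrability.

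First I would establish the existence of a Borel vector field $u_t \in L^2(TM, \rho_t)$ such that $(\rho_t, u_t)$ satisfies \eqref{eq:continuity_dist} and $\|u_t\|_{L^2(\rho_t)} \leq |\rho_t'|$ for $L^1$-a.e.\ $t$. The natural route is duality: for a.e.\ $t$, define the linear functional $L_t$ on the subspace $\{\nabla \varphi : \varphi \in \mathcal{D}(M)\} \subset L^2(TM, \rho_t)$ by $L_t(\nabla\varphi) = \frac{d}{dt}\int_M \varphi\,\rd\rho_t$. Using that $\rho_t$ is AC and Definition~\ref{def:metric_derivative}, one can show that $|L_t(\nabla\varphi)| \leq |\rho_t'|\,\|\nabla\varphi\|_{L^2(\rho_t)}$, so $L_t$ is bounded on this subspace. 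An application of Hahn--Banach (to extend $L_t$ to all of $L^2(TM, \rho_t)$) followed by the Riesz representation theorem produces $u_t$ with the desired norm bound, and the identity $L_t(\nabla\varphi) = \int_M g_x(\nabla\varphi, u_t)\rd\rho_t$ is exactly the distributional continuity equation \eqref{eq:continuity_dist}.

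Next I would set $v_t := P^{\rho_t} u_t$. By Proposition~\ref{prop:integral_not_changed}, for every $\varphi \in \mathcal{D}(M)$,
\begin{equation*}
\int_M g_x(\nabla\varphi(x), v_t(x))\,\rd\rho_t(x) = \int_M g_x(\nabla\varphi(x), u_t(x))\,\rd\rho_t(x),
\end{equation*}
so replacing $u_t$ by $v_t$ preserves \eqref{eq:continuity_dist} (the equation only probes $v_t$ against gradients of test functions via integration by parts in space). Since $v_t$ lies in the image of $P^{\rho_t}$, we have $v_t \in T_{\rho_t}\mathcal{P}_2(M)$, so $v_t$ is a vector field along $\rho_t$ in the sense of Definition~\ref{def:vector_field}.

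Finally, because $P^{\rho_t}$ is the orthogonal projection onto $T_{\rho_t}\mathcal{P}_2(M)$ with respect to the $\rho_t$-weighted $L^2$ inner product (see \eqref{eq:projection}), it is norm-nonincreasing: $\|v_t\|_{L^2(\rho_t)} \leq \|u_t\|_{L^2(\rho_t)} \leq |\rho_t'|$. Since $\rho_t$ is AC, $|\rho_t'| \in L^1([0,1])$, and therefore
\begin{equation*}
\int_0^1 \|v_t\|_{L^2(\rho_t)}\,\rd t \;\leq\; \int_0^1 |\rho_t'|\,\rd t \;<\; \infty,
\end{equation*}
which verifies the integrability requirement and completes the argument. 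The main obstacle is the first step: Theorem~\ref{thm:ODE to PDE} supplies the converse implication (vector field $\Rightarrow$ AC curve), but not the direction I need. The Hahn--Banach/Riesz construction above fills this gap, and the AC-to-metric-derivative bound is what makes the linear functional $L_t$ bounded in the first place.
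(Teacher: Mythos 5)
Your argument is correct and follows essentially the same route as the paper: produce a vector field solving the continuity equation with $\|u_t\|_{L^2(\rho_t)} \leq |\rho_t'|$ and then project it onto $T_{\rho_t}\mathcal{P}_2(M)$ via $P^{\rho_t}$, using Proposition~\ref{prop:integral_not_changed} to see that the continuity equation is preserved. The only differences are that the paper simply cites Proposition~\ref{prop:vectors} for the existence step, which you instead re-derive via the standard Hahn--Banach/Riesz duality argument, and that you explicitly carry out the integrability check $\int_0^1\|v_t\|_{L^2(\rho_t)}\,\rd t \leq \int_0^1 |\rho_t'|\,\rd t < \infty$, which the paper leaves implicit.
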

The proof appears in Appendix~\ref{sec:proofs} and relies on the following Proposition:
\begin{prop}[Existence of vectors along AC curves~\citep{user_guide}]\label{prop:vectors}
    If the curve $\rho_t$ is AC, then there exists Borel vector field $v_t$ with $\|v_t\|_{L^2(\rho_t)} \leq |\rho_t'| < \infty$ a.e.~in $t$ such that~\eqref{eq:continuity} holds. %
\end{prop}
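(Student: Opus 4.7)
The plan is to start from Proposition \ref{prop:vectors}, which already supplies a Borel vector field $w_t \in L^2(TM,\rho_t)$ satisfying the continuity equation with $\|w_t\|_{L^2(\rho_t)} \leq |\rho_t'|$ almost everywhere in $t$. Given this, proving the lemma reduces to two tasks: (1) upgrading $w_t$ to an object $v_t$ that genuinely lies in the Wasserstein tangent space $T_{\rho_t}\mathcal{P}_2(M)$ while keeping the continuity equation intact, and (2) verifying the integrability $\int_0^1 \|v_t\|_{L^2(\rho_t)} \, \rd t < \infty$ demanded by the definition of a differentiable curve.

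For task (1), I would set $v_t := P^{\rho_t} w_t$ where $P^{\rho_t}$ is the projection from \eqref{eq:projection}. By the very definition of the projection operator, $v_t \in T_{\rho_t}\mathcal{P}_2(M)$, so the vector field along the curve belongs to the tangent space as required. The key fact that preserves the continuity equation is Proposition \ref{prop:integral_not_changed}: testing the distributional form \eqref{eq:continuity_dist} against any $\varphi \in \mathcal{D}(M \times [0,T])$ and freezing $t$, the spatial integral $\int_M g_x(\nabla\varphi(\cdot,t), w_t)\, \rd\rho_t$ is unchanged when $w_t$ is replaced by $P^{\rho_t} w_t$. Since the time-derivative term in \eqref{eq:continuity_dist} does not involve the vector field, $(\rho_t, v_t)$ still solves the continuity equation in the distributional sense.

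For task (2), I would use the fact that $P^{\rho_t}$ is a non-expansive linear projection onto a closed subspace of $L^2(TM,\rho_t)$, so $\|v_t\|_{L^2(\rho_t)} \leq \|w_t\|_{L^2(\rho_t)} \leq |\rho_t'|$ a.e.~in $t$. Absolute continuity of $\rho_t$ gives, via its definition, a dominating function $f \in L^1([0,1])$ with $\mathfrak{D}(\rho_t,\rho_s) \leq \int_t^s f\, \rd x$, and this forces $|\rho_t'| \leq f$ a.e.~by the metric derivative definition \eqref{eq:metric_derivative}. Hence
\begin{equation*}
\int_0^1 \|v_t\|_{L^2(\rho_t)} \, \rd t \;\leq\; \int_0^1 |\rho_t'|\, \rd t \;\leq\; \int_0^1 f(t)\, \rd t \;<\; \infty,
\end{equation*}
which is the integrability condition in the definition of a differentiable curve.

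The main obstacle I anticipate is ensuring that the projection step is carried out in a measurable way across $t$, so that $v_t$ is an actual Borel vector field along the curve and not merely a collection of pointwise-in-$t$ choices; this is what legitimizes interchanging the projection with the time integrals in \eqref{eq:continuity_dist}. I would handle this by invoking the characterization in Definition \ref{def:tangent_space}(3) of $T_{\rho_t}\mathcal{P}_2(M)$ as the $L^2_{\rho_t}$-closure of gradients of test functions, which makes the joint measurability of $(t,x)\mapsto v_t(x)$ inherited from that of $w_t$. With that subtlety resolved, the three properties above combine to give a differentiable curve whose driving field $v_t = \frac{d}{dt}\rho_t$ lies in the Wasserstein tangent space, completing the proof.
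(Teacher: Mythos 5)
Your proposal does not prove the stated proposition: its very first sentence invokes Proposition \ref{prop:vectors} as a given (``which already supplies a Borel vector field $w_t \in L^2(TM,\rho_t)$ satisfying the continuity equation with $\|w_t\|_{L^2(\rho_t)} \leq |\rho_t'|$''), and that is precisely the statement you are being asked to establish. Everything that follows --- projecting $w_t$ onto $T_{\rho_t}\mathcal{P}_2(M)$ via $P^{\rho_t}$, using Proposition \ref{prop:integral_not_changed} to preserve the continuity equation, and checking $\int_0^1\|v_t\|_{L^2(\rho_t)}\,\rd t<\infty$ --- is the argument for Lemma \ref{lem:diff_curve} (that an AC curve is differentiable), which is exactly how the paper proves that lemma in the supplement. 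You have in effect written a correct, somewhat more detailed proof of Lemma \ref{lem:diff_curve}, but a circular one for Proposition \ref{prop:vectors}.

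The actual content of Proposition \ref{prop:vectors} is the hard existence step, and the paper does not reprove it either: it is quoted from \cite{user_guide} (it is essentially Theorem 8.3.1 of Ambrosio--Gigli--Savar\'e). A genuine proof must manufacture $v_t$ from purely metric information about the curve. The standard route is a duality argument: for a.e.\ $t$ one shows that the linear functional $\nabla\varphi \mapsto \frac{d}{dt}\int_M \varphi\,\rd\rho_t$, defined on gradients of test functions, is bounded in $L^2(\rho_t)$ with operator norm at most $|\rho_t'|$ (this uses a Kantorovich-duality estimate comparing $\left|\int_M\varphi\,\rd\rho_s-\int_M\varphi\,\rd\rho_t\right|$ with $\mathfrak{D}(\rho_t,\rho_s)$ and the $L^2(\rho_t)$-norm of $\nabla\varphi$), and then applies the Riesz representation theorem in the Hilbert space $\overline{\{\nabla\varphi:\varphi\in C_c^\infty(M)\}}^{L^2_{\rho_t}}$ to produce $v_t$; Borel measurability in $t$ requires a separate approximation or measurable-selection argument. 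None of these ideas appear in your proposal, so the key step is missing.
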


\subsection{Differentiable Maps on $\mathcal{P}_2(M)$}
The next step is to define differentiability for a map $F:\mathcal{P}_2(M) \rightarrow \mathcal{P}_2(M)$, a necessary property for $F$ to be an embedding (see Definition~\ref{def:embeddin}). In the classical sense, a map $f$ between two vector spaces is differentiable if there exists a linear operator $df$ such that 
$$
\lim_{h \to 0} \frac{|f(u + h) - f(u) - df(u)|}{|h|} = 0, \ \forall  u\in M\,.
$$ 
Moreover, a map between two manifolds is differentiable if it is locally differentiable in any chart. 
This definition cannot be directly translated to $\mathcal{P}_2(M)$ since it involves a pointwise evaluation of the differential map in any given chart, whereas the tangent vectors in $\mathcal{P}_2(M)$ are only defined almost everywhere. Therefore, we use an equivalent definition of differentiability{~\cite[Chapter 3]{leemanifolds}.}
        \begin{definition}[Differentiable maps]\label{def:differentiable}
            An absolutely continuous map $F:\mathcal{P}_2(M) \rightarrow\mathcal{P}_2(N)$ is differentiable if for any $\rho \in \mathcal{P}_2(M)$ there exists a bounded linear map $dF_\rho : T_\rho\mathcal{P}_2(M) \rightarrow T_{F(\rho)}\mathcal{P}_2(N)$ such that for any differentiable curve $\rho_t$ through $\rho$, with $\frac{d}{dt}\rho_t = v_t$, the curve $F(\rho_t)$ is differentiable. Moreover, the derivative operator of $F$ is $dF_{\rho_t}: T_{\rho_t}\mathcal{P}_2(M) \to T_{F(\rho_t)}\mathcal{P}_2(N)$,  $dF_{\rho_t} (v_t) :=\frac{d}{dt}F(\rho_t)  $.
        \end{definition}
        In other words, Definition~\ref{def:differentiable} requires that the map $F$ takes differentiable curves to differentiable curves, and tangent vectors to the corresponding tangent vectors. 
   
   Next, we consider the particular situation where the map $F:\mathcal{P}_2(M) \rightarrow \mathcal{P}_2(N)$ is the pushforward of some $f:M \rightarrow N$. This is exactly the case for the measure-theoretic delay-embedding map $\Psi_{h, \phi_{\tau}} := \Phi_{h, \phi_{\tau}} \#$. Since the classic delay-embedding map $\Phi_{h, \phi_{\tau}}$ is invertible, we will specifically analyze invertible $f$. A generalization of Theorem~\ref{thm:push_forward_differentiable} below can be found in~\citep{diff_maps}. For completeness, we provide a full proof of Theorem \ref{thm:push_forward_differentiable} in Appendix~\ref{sec:proofs}.

    \begin{theorem}[The pushforward map is differentiable]\label{thm:push_forward_differentiable}
        Let $F = f\#$ as described above and assume $f:(M, g) \rightarrow (N, q)$ is continuously differentiable, invertible  and proper such that $\sup_{x\in M} \|df_x\|< \infty$. Then $F$ is differentiable (in the sense of Definition \ref{def:differentiable}) and $dF_\rho = P^{F(\rho)}\widetilde{dF}_{\rho}$, where
        \begin{equation}\label{eq:tilde_psi}
          \widetilde{dF}_{\rho} (v)(y) := df_{f^{-1}(y)}(v(f^{-1}(y))), \ \forall y \in N, \ \forall v \in T_\rho \mathcal{P}_2(M)\,.
        \end{equation}
    \end{theorem}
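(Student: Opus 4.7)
The plan is to verify the two requirements in Definition~\ref{def:differentiable} in turn: first that $F = f\#$ is absolutely continuous as a map $\mathcal{P}_2(M)\to\mathcal{P}_2(N)$, and then that for any differentiable curve $\rho_t$ through $\rho$ with tangent $v_t$, the curve $F(\rho_t)$ is differentiable with tangent $P^{F(\rho_t)}\widetilde{dF}_{\rho_t}(v_t)$. For absolute continuity, I would use the bound $\|df_x\|\le L := \sup_{x\in M}\|df_x\|<\infty$, which makes $f$ globally $L$-Lipschitz between the Riemannian metrics on $M$ and $N$. Pushing any optimal transport plan between $\rho_s$ and $\rho_t$ forward through $(f,f)$ then yields $W_2(F(\rho_s),F(\rho_t)) \le L\,W_2(\rho_s,\rho_t)$, so AC curves map to AC curves.

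The main calculation is to show that the pair $(\mu_t,\widetilde w_t)$ with $\mu_t := F(\rho_t)$ and $\widetilde w_t := \widetilde{dF}_{\rho_t}(v_t)$ satisfies the continuity equation on $N$ in the distributional sense~\eqref{eq:continuity_dist}. Given $\varphi \in \mathcal{D}(N)=\mathcal{C}^1_c(N\times[0,T])$, I would first check that $\varphi\circ f \in \mathcal{D}(M\times[0,T])$: $C^1$ regularity is immediate from the chain rule, and properness of $f$ ensures that $f^{-1}(\operatorname{supp}\varphi)$ is compact. Then, using the definition of pushforward and Proposition~\ref{prop:change_of_vars},
\begin{align*}
\int_0^T\!\!\int_N \bigl(\partial_t\varphi + q_y(\widetilde w_t,\nabla\varphi)\bigr)\rd\mu_t\rd t
&= \int_0^T\!\!\int_M \bigl(\partial_t(\varphi\circ f) + g_x(v_t,\nabla(\varphi\circ f))\bigr)\rd\rho_t\rd t = 0,
\end{align*}
where the last equality is the continuity equation satisfied by $(\rho_t,v_t)$ applied to the test function $\varphi\circ f$. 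This shows $\widetilde w_t$ is a valid (but possibly non-unique, cf.~Remark~\ref{rmk:non uniqueness}) vector field along $\mu_t$, and an $L^2$ bound $\|\widetilde w_t\|_{L^2(\mu_t)}\le L\|v_t\|_{L^2(\rho_t)}$ again follows from $\|df_x\|\le L$ and change of variables, establishing that $F(\rho_t)$ is a differentiable curve.

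To pin down the canonical tangent vector in $T_{\mu_t}\mathcal{P}_2(N)$, I would project: set $dF_{\rho_t}(v_t) := P^{\mu_t}\widetilde w_t$. Proposition~\ref{prop:integral_not_changed} guarantees that testing against $\nabla\varphi$ is unchanged by this projection, so $P^{\mu_t}\widetilde w_t$ still satisfies the continuity equation and thus genuinely represents $\tfrac{d}{dt}F(\rho_t)$ in the Wasserstein tangent space. Linearity of $dF_\rho$ follows from linearity of $df_x$ on each fiber and of the projection $P^{F(\rho)}$, while boundedness follows from the $L^2$ estimate above combined with the fact that $P^{F(\rho)}$ is nonexpansive.

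The main obstacle I anticipate is the interplay between three ``non-uniqueness'' issues: the non-uniqueness of $v_t$ along $\rho_t$ (Remark~\ref{rmk:non uniqueness}), the need to show that $dF_\rho$ is well-defined as a map on equivalence classes in $T_\rho\mathcal{P}_2(M)$, and the fact that $\widetilde{dF}_\rho(v)$ a priori sits in $L^2(TN,F(\rho))$ rather than the tangent space itself. All three are resolved by the same mechanism — projection followed by the identity in Proposition~\ref{prop:integral_not_changed} — but one must check that if $v\sim w$ in $T_\rho\mathcal{P}_2(M)$ via~\eqref{eq:equality2}, then $\widetilde{dF}_\rho v$ and $\widetilde{dF}_\rho w$ pair identically with every $\nabla\varphi$, $\varphi\in\mathcal{D}(N)$, under $F(\rho)$; this is again a direct application of Proposition~\ref{prop:change_of_vars} using $\nabla(\varphi\circ f)$ as the admissible test gradient on $M$.
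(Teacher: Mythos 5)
Your proposal is correct and follows essentially the same route as the paper's proof: verify that $(F(\rho_t),\widetilde{dF}_{\rho_t}(v_t))$ solves the continuity equation by pulling back test functions through $f$ (with properness and $C^1$ regularity guaranteeing $\varphi\circ f\in\mathcal{D}(M\times[0,T])$), bound $\|\widetilde{dF}_{\rho_t}(v_t)\|_{L^2(F(\rho_t))}$ by $\sup_x\|df_x\|\,\|v_t\|_{L^2(\rho_t)}$, and then project onto $T_{F(\rho)}\mathcal{P}_2(N)$ using Proposition~\ref{prop:integral_not_changed} to see that the continuity equation is preserved. The only cosmetic difference is that you establish absolute continuity of $F$ via the Wasserstein Lipschitz estimate $W_2(F(\rho_s),F(\rho_t))\le L\,W_2(\rho_s,\rho_t)$ from pushing forward couplings, whereas the paper deduces it from the integrability of the transported velocity field; both are standard and equivalent here.
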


    \subsection{The Embedding in $\mathcal{P}_2(M)$}
    Building on top of Definition~\ref{def:differentiable}, we turn to the notion of an embedding in the space of probability distributions. Intuitively, an embedding is a diffeomorphism which preserves the differential structure (see Definition~\ref{def:embeddin}). In our situation, this  structure is given by the geometry of $T\mathcal{P}_2(M)$ described in Section~\ref{sec:tangent_space}.
    \begin{definition}[Embedding in the probability space]\label{def:embedding}
    A map $F:\mathcal{P}_2(M) \rightarrow \mathcal{P}_2(N)$ is an embedding if the following conditions are satisfied:
    \begin{enumerate}
        \item[(i)] $F$ is a bijection onto its image, i.e., $\forall \rho, \eta \in \mathcal{P}_2(M)$ such that $\rho \neq \eta$ as probability distributions, $F(\rho) \neq F(\eta)$;
        \item[(ii)] $F$ is differentiable in the sense of Definition~\ref{def:differentiable};
        \item[(iii)] The derivative operator $dF$ is injective, i.e., for any $v, w \in T_\rho \mathcal{P}_2(M)$ such that $v \neq w$ (in the sense of~\eqref{eq:equality2}), $dF(v) \neq dF(w)$ as vectors in $T_{F(\rho)}\mathcal{P}_2(N)$.
    \end{enumerate}
\end{definition}
\subsection{Statement of the Main Theorem}
Having defined all the prerequisites, we are now ready to state the measure-theoretic version of time-delay embedding theorem.

\begin{theorem}\label{thm:main_embedding}
Let $f:M \to N$ be an embedding between two differentiable manifolds $M$ and $N$. Then the map $F := f\# :\mathcal{P}_2(M)\rightarrow \mathcal{P}_2(N)$ is an embedding between the spaces of probability distributions on $M$ and $N$, respectively (in the sense of Definition~\ref{def:embedding}).
\end{theorem}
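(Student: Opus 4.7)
The plan is to verify each of the three conditions in Definition~\ref{def:embedding} for the pushforward map $F = f\#$, exploiting the fact that $M$ is a compact differentiable manifold and that $f$ is a diffeomorphism onto its image $f(M) \subseteq N$ with everywhere injective derivative.

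For condition (i) (bijectivity onto its image), I use that $f$ is injective. Given two distinct measures $\rho \neq \eta$ in $\mathcal{P}_2(M)$, choose a Borel set $A \subset M$ with $\rho(A) \neq \eta(A)$. Since $f$ restricts to a homeomorphism onto $f(M)$, the set $B := f(A) \subset N$ is Borel, and injectivity of $f$ yields $f^{-1}(B) = A$. Hence $(F\rho)(B) = \rho(A) \neq \eta(A) = (F\eta)(B)$, so $F(\rho) \neq F(\eta)$.

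For condition (ii) (differentiability), I invoke Theorem~\ref{thm:push_forward_differentiable} directly. Its hypotheses on $f$ (continuous differentiability, invertibility onto its image, properness, and $\sup_{x \in M}\|df_x\| < \infty$) are all inherited from $f$ being an embedding of a compact manifold: compactness of $M$ makes $f$ automatically proper and $\|df_x\|$ uniformly bounded by continuity. The theorem then supplies that $F$ is differentiable with $dF_\rho = P^{F(\rho)}\widetilde{dF}_\rho$, where $\widetilde{dF}_\rho$ is given by~\eqref{eq:tilde_psi}.

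For condition (iii) (injectivity of the derivative), by linearity of $dF_\rho$ it suffices to show $\ker dF_\rho = \{0\}$. Fix $u \in T_\rho \mathcal{P}_2(M)$ with $u \neq 0$ in the sense of Remark~\ref{rmk:equality}; then there exists $\varphi \in \mathcal{D}(M)$ with $\int_M g_x(\nabla\varphi(x), u(x))\,\rd\rho(x) \neq 0$. I will produce a witness $\psi \in \mathcal{D}(N)$ showing $dF_\rho(u) \neq 0$ in $T_{F(\rho)} \mathcal{P}_2(N)$, by choosing $\psi$ so that $\psi \circ f = \varphi$ on $M$ (set $\psi := \varphi \circ f^{-1}$ on the compact embedded submanifold $f(M)$, then extend to a compactly supported $C^1$ function on $N$ using a tubular neighborhood of $f(M)$ together with a smooth cutoff). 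Then three successive identities give the result:
\begin{align*}
\int_N q_y\!\left(\nabla\psi(y),\, dF_\rho(u)(y)\right) \rd (F\rho)(y)
&= \int_N q_y\!\left(\nabla\psi(y),\, \widetilde{dF}_\rho(u)(y)\right) \rd (F\rho)(y) \\
&= \int_M q_{f(x)}\!\left((\nabla\psi)(f(x)),\, df_x(u(x))\right) \rd\rho(x) \\
&= \int_M g_x\!\left(\nabla(\psi\circ f)(x),\, u(x)\right) \rd\rho(x) \\
&= \int_M g_x\!\left(\nabla\varphi(x),\, u(x)\right) \rd\rho(x) \neq 0,
\end{align*}
where the first equality uses Proposition~\ref{prop:integral_not_changed} (the projection $P^{F(\rho)}$ is invisible against gradients under the $F(\rho)$-integral), the second is the pushforward change of variables, and the third is Proposition~\ref{prop:change_of_vars}.

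I expect the main obstacle to lie in step (iii), specifically in the construction of the extended test function $\psi \in C_c^1(N)$ realizing $\psi\circ f = \varphi$ on $M$. If one interprets the embedding $f$ strictly as a diffeomorphism onto all of $N$, this is trivial; but in the relevant applications (e.g.\ $f = \Phi_{h,\phi_\tau}:M \to \mathbb{R}^m$) the image $f(M)$ is a proper submanifold, and one must perform a smooth extension past $f(M)$ while preserving compact support and the $C^1$ regularity required by $\mathcal{D}(N)$. Once the extension is in hand, the chain of identities above is routine given the results already assembled in the paper.
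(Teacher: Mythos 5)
Your proof is correct and follows essentially the same route as the paper's: injectivity of $F$ from injectivity of $f$, differentiability via Theorem~\ref{thm:push_forward_differentiable}, and injectivity of $dF_\rho$ by testing against $\varsigma=\varphi\circ f^{-1}$ and chaining Proposition~\ref{prop:integral_not_changed}, the pushforward change of variables, and Proposition~\ref{prop:change_of_vars}. The one point where you go beyond the paper is in explicitly extending $\varphi\circ f^{-1}$ from the compact submanifold $f(M)$ to a compactly supported $C^1$ function on all of $N$; the paper simply writes $\varsigma=\varphi\circ f^{-1}:N\to\mathbb{R}$ and verifies compact support and regularity, so your tubular-neighborhood extension (harmless to the identities, since $\widetilde{dF}_\rho(u)$ is tangent to $f(M)$ and $F(\rho)$ is supported there) supplies a detail the paper leaves implicit.
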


A direct corollary of this theorem gives us the measure-theoretic time-delay embedding.
\begin{cor}\label{cor:1}
     Let $\phi_t:M \rightarrow M$ be a dynamical system on a compact $d$-dimensional manifold $M$ such that its vector field satisfies the conditions of Theorem~\ref{thm:classic_takens}. For an observable $h\in \mathcal{C}^2(M,\mathbb{R})$, define the delay embedding map $\Phi_{h, \phi_{\tau}}:M \rightarrow \mathbb{R}^m$ as in~\eqref{eq:delay} and let  $\Psi_{h, \phi_{\tau}}: \mathcal{P}_2(M) \rightarrow \mathcal{P}_2(\mathbb{R}^m)$ be its pushforward, i.e., $\Psi_{h, \phi_{\tau}} \rho = \Phi_{h, \phi_{\tau}}\# \rho$. Then, if $m \geq 2d + 1$, it is a generic property that $\Psi_{h, \phi_{\tau}}$ is an embedding of $\mathcal{P}_2(M)$ into $\mathcal{P}_2(\mathbb{R}^m)$ (in the sense of Definition~\ref{def:embedding}).
\end{cor}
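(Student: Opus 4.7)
The plan is to observe that Corollary \ref{cor:1} is essentially an immediate consequence of stacking two results already in hand: the classical Takens' embedding theorem (Theorem \ref{thm:classic_takens}) produces the Lagrangian-level embedding between manifolds, and the measure-theoretic lifting theorem (Theorem \ref{thm:main_embedding}) promotes this to the pushforward level. So the proof is essentially a composition argument, and the only care needed is to verify that the hypotheses line up and that the genericity is preserved under the lift.

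First I would apply Theorem \ref{thm:classic_takens} with the stated assumptions on $\phi_t$, $m\geq 2d+1$, and the delay parameter $\tau>0$. This guarantees that there is an open and dense set $\mathcal{H}\subseteq C^2(M,\mathbb{R})$ such that for every $h\in\mathcal{H}$, the delay map $\Phi_{h,\phi_\tau}:M\to\mathbb{R}^m$ is an embedding in the sense of Definition \ref{def:embeddin}. Fix any such $h\in\mathcal{H}$. Next I would check the regularity conditions needed for Theorem \ref{thm:main_embedding} (and the underlying Theorem \ref{thm:push_forward_differentiable}) to apply to $f:=\Phi_{h,\phi_\tau}:M\to\mathbb{R}^m$. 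Since $h\in C^2$ and $\phi_t$ is $C^2$, $f$ is continuously differentiable; being an embedding, $f$ is a diffeomorphism onto its image (hence invertible as a map onto $f(M)$); compactness of $M$ ensures that $f$ is proper and that $\sup_{x\in M}\|df_x\|<\infty$ by continuity of $df$. Thus the assumptions needed to apply the lifting machinery are met.

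Then I would invoke Theorem \ref{thm:main_embedding} directly with $N=\mathbb{R}^m$ and $f=\Phi_{h,\phi_\tau}$ to conclude that the pushforward $\Psi_{h,\phi_\tau}=\Phi_{h,\phi_\tau}\#:\mathcal{P}_2(M)\to\mathcal{P}_2(\mathbb{R}^m)$ satisfies the three requirements of Definition \ref{def:embedding}: bijectivity onto its image, differentiability in the sense of Definition \ref{def:differentiable}, and injectivity of the differential $D\Psi_{h,\phi_\tau}$. Because this conclusion holds for every $h\in\mathcal{H}$, and $\mathcal{H}$ is open and dense in $C^2(M,\mathbb{R})$ by Takens' theorem, the conclusion is itself generic in $h$.

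Honestly, the main ``obstacle'' here is not a deep one — Theorem \ref{thm:main_embedding} has already done the heavy lifting, proving the three nontrivial properties (injectivity on measures, differentiability via the projection $P^{F(\rho)}\widetilde{dF}_\rho$, and injectivity of the derivative) using optimal-transport tools. The only subtle bookkeeping item is confirming that the ``generic'' qualifier from the classical statement is inherited verbatim by the lifted statement, which it is because the set of admissible $h$ on which our argument runs is exactly the set produced by Theorem \ref{thm:classic_takens}. Thus Corollary \ref{cor:1} follows.
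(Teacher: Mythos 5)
Your proposal is correct and follows essentially the same route as the paper: apply Theorem~\ref{thm:classic_takens} to get that $\Phi_{h,\phi_\tau}$ is generically an embedding, then invoke Theorem~\ref{thm:main_embedding} to lift this to the pushforward $\Psi_{h,\phi_\tau}$. Your additional verification that $\Phi_{h,\phi_\tau}$ is $C^1$, proper, and has bounded differential (so that Theorem~\ref{thm:push_forward_differentiable} applies) is more explicit than the paper's two-line argument, but the underlying reasoning is identical.
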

\begin{proof}[Proof of Corollary~\ref{cor:1}]
    Since Theorem \ref{thm:classic_takens} holds, $\Phi_{h, \phi_{\tau}}$ is generically an embedding. Hence, Theorem \ref{thm:main_embedding} can be applied to deduce that $\Psi_{h, \phi_{\tau}} = \Phi_{h, \phi_{\tau}}\#$ is generically an embedding between $\mathcal{P}_2(M)$ and $\mathcal{P}_2(\mathbb{R}^m)$.
\end{proof}

\begin{proof}[Proof of Theorem~\ref{thm:main_embedding}]
We will show that the three conditions in Definition~\ref{def:embedding} are satisfied. We start by showing that $F$ is injective. Assume $\rho_0,\rho_1 \in \mathcal{P}_2(M)$ such that $F(\rho_0) = F(\rho_1)$. By the definition of $F$, we have
\begin{equation}\label{eq:injectivity}
    f \# \rho_0 = f\# \rho_1.
\end{equation}
Since $f$ is a bijection, there exists the inverse map $f^{-1}:f(M) \subset N \rightarrow M$  such that  $f^{-1} \circ f = \textit{Id}_M$. Consequently, $F$ is injective as
$$f^{-1}\#f \# \rho_0 = f^{-1}\#f \# \rho_1 \iff \rho_0 = \rho_1.$$

Differentiability of $F$ follows from Theorem~\ref{thm:push_forward_differentiable} because $F$ is the pushforward of $f$, with the latter being an invertible and proper map. Additionally, \eqref{eq:tilde_psi} gives an explicit formula for the derivative operator $dF : T\mathcal{P}_2(M) \rightarrow  T\mathcal{P}_2(N)$, 
\[
    dF_{\rho} = P^{F(\rho)}\widetilde{dF}_{\rho} , \text{ where } \widetilde{dF}_{\rho} (y) = df_{f^{-1}(y)}(v(f^{-1}(y))).
\]

Lastly, we show that the derivative operator is injective, i.e., if $v \neq w$ in $T_{\rho}\mathcal{P}_2(M)$, then $dF_{\rho}(v) \neq  dF_\rho(w)$ in $T_{F({\rho})} \mathcal{P}_2(N)$.
By Remark~\ref{rmk:equality}, $v \neq w$ implies $\exists \varphi \in \mathcal{D}(M)$ such that
\begin{equation}\label{eq:what_we_know}
    \int_M g_x(\nabla\varphi, v - w) \rd \rho_t(x) \neq 0.
\end{equation}
The goal is to find a test function $\varsigma \in \mathcal{D}(N)$ such that 
\[
    \int_{\mathbb{R}^d}q_y\Big( \nabla \varsigma (y), dF_{\rho}(v) (y) - dF_{\rho}(w) (y)\Big) \rd \nu(y)\neq 0,
\] 
where $\nu = F({\rho}) = f \#\rho$, and $q_y$ denotes the Riemannian inner product in $N$ at the point $y\in N$. Further derivation shows that
\begin{align*}
     &\int_{N}q_y\Big(  \nabla \varsigma (y), dF_{\rho}(v) (y) - dF_{\rho}(w) (y)\Big) \rd \nu(y)  &\text{ By the linearity of }dF_\rho\\
    = & \int_{f(M)}q_y\Big( \nabla \varsigma (y), dF_{\rho}(v - w) (y)\Big)  \rd \nu(y) &\text{ Since } dF_\rho - \widetilde{dF}_\rho \text{ is divergence free}\\
    =& \int_{f(M)}q_y\Big( \nabla \varsigma (y),\widetilde{dF}_{\rho} (v - w) (y)\Big)  \rd \nu(y)&\text{ Using Theorem \ref{thm:push_forward_differentiable}} \\
    =& \int_{f(M)}q_y\Big( \nabla \varsigma (y), df_{f^{-1}(y)} (v - w) (f^{-1}(y))\Big) \rd(f \#\rho)(y) \\
    =& \int_M q_{f(x)}\Big(\nabla\varsigma (f(x)), df_x (v - w)(x)\Big) \rd\rho(x) & \text{Using Proposition~\ref{prop:change_of_vars}}\\
    =& \int_M g_x(\nabla (\varsigma \circ f)(x), (v - w)(x)) \rd\rho(x). 
    \end{align*}
Since $f$ is an embedding,  it is differentiable and invertible. Plugging $\varsigma = \varphi \circ f^{-1} : N \rightarrow \mathbb{R}$ back into the last equation above, we get \eqref{eq:what_we_know}. 

The only claim left to show is that the $\varsigma$ defined above lies in $\mathcal{D}(N)$. To show $\varsigma$ is compactly supported, note that
\begin{align*}
\{x : \varsigma(x) \neq 0\}
&= \{x : \varphi(f^{-1}(x)) \neq 0\} 
\\
& = \{x : f^{-1}(x) \in \{y : \varphi(y) \neq 0\}\} \\
& = f\bigl(\{y : \varphi(y) \neq 0\}\bigr).
\end{align*}
Taking closures and using the fact that a homeomorphism preserves closures, we get $\text{supp}(\varsigma) = f(\text{supp}(\varphi))$. Also, since $f$ is a homeomorphism, it maps compact sets to compact sets, so $\text{supp}(\varsigma)$ is compact. Moreover, $\varsigma \in \mathcal{C}^1 (N)$ because $\varphi\in \mathcal{C}^1$ and $f^{-1} \in \mathcal{C}^1(M, N)$. We conclude our proof with $\varsigma \in \mathcal{D}(N)$.
\end{proof}

\section{Numerical Experiments}\label{sec:experiments}
In this section, we introduce a measure-theoretic computational framework for learning the full-state reconstruction map as a pushforward between probability spaces\footnote{Our code is available at \href{https://github.com/jrbotvinick/Measure-Theoretic-Time-Delay-Embedding}{https://github.com/jrbotvinick/Measure-Theoretic-Time-Delay-Embedding}. }.   In Section~\ref{subsec:motivation}, we leverage Theorem \ref{thm:main_embedding} to introduce and motivate our proposed methodology.  In Section \ref{subsec:noisydata}, we demonstrate the robustness of our learned reconstructions to extrinsic noise in the training data for synthetic test systems. In Section~\ref{subsec:realdata}, we combine our framework with POD-based model reduction to  reconstruct the NOAA Sea Surface Temperature (SST) dataset from partial measurement data. Finally, in Section~\ref{subsec:ERA5} we reconstruct the ERA5 wind-speed dataset from partially observed vector-valued data. Throughout, all experiments are conducted using an Intel i7-1165G7 CPU. 

\subsection{From Theory to Applications}\label{subsec:motivation}
\subsubsection{Motivation}
While the classical Takens' embedding theorem guarantees the existence of a reconstruction map from delay coordinates to the full state (see Theorem \ref{thm:classic_takens}), it provides no analytic form for the function.
In applications, the resulting coordinate transformation is often learned from data. However, the accuracy of these learning methods can be significantly compromised  when the available data is noisy and sparse. To address this issue, we develop a measure-theoretic approach to learning the reconstruction map, inspired by Corollary~\ref{cor:1}.

We begin  by considering samples $\{x_i\}$ along a trajectory of a smooth flow $\phi_t:M\to M$, where $M\subseteq \mathbb{R}^n$  is a smooth compact $d$-dimensional manifold. In applications, samples $\{x_i\}$ of the full state can rarely be observed directly, and instead, one may only have access to the values $\{h(x_i)\}$ of an observable along the trajectory. For suitably chosen time-delay parameters $m\in \mathbb{N}$ and $\tau> 0$, the map $\Phi = \Phi_{h,\phi_{\tau}}$ is an embedding of $M$, and one can form the time-delayed trajectory $\{\Phi(x_i)\}$. It also holds that $\{\Phi(x_i)\}$ and $\{x_i\}$ are  related pointwisely by the smooth deterministic map $\Phi^{-1}:\Phi(M) \to \mathbb{R}^{n}$. Thus, if one can learn the reconstruction map $\Phi^{-1}$ from the paired data $\{(x_i,\Phi(x_i))\}$, then the history of the one-dimensional time-series $\{h(x_i)\}$ can be used to {reconstruct} the entire $n$-dimensional trajectory $\{x_i\}.$ 

{This approach is useful in applications where only limited training data from the full state is available  \cite{lu2017reservoir}. In particular, it may be possible to access the full state of a high-dimensional system for a short period of time, but it may be prohibitively expensive or challenging to continue measuring it directly at later times. In such a setup, one may build a surrogate mapping from delay embeddings of a partially observed state variable to the full state. Thus, expensive measurements of the full state need not be recorded, and one can instead reconstruct the full state from easier-to-access partial observations. This problem is common in the surrogate modeling of fluid flows  where one builds a full-state reconstruction map using only a few sparsely distributed sensors \cite{maulik2020probabilistic,callaham2019robust}.}

In certain applications, no data from the full state can be observed. In these cases, one can still predict the evolution of the one-dimensional time series $\{h(x_i)\}$ by learning the map 
\begin{equation}\label{eq:delay_dynamics}
    \mathcal{T}:\Phi(M)\to \Phi(M),\qquad \mathcal{T}(\Phi(x_i)) = \Phi(x_{i+1}),
\end{equation}
where here we have assumed that the samples $\{x_i\}$ are obtained uniformly in time. Then, the delay state can be predicted by iterated composition with $\mathcal{T}$, and the forecasted time series can be recovered by projecting to a single axis.

\subsubsection{The Measure-Theoretic Loss}

We now recall the standard pointwise approach for learning the reconstruction map $\Phi^{-1}$, which is used in \citep{bakarji2022discovering, raut2023arousal, young2023deep}. Given paired data  $\{(x_i,\Phi(x_i))\}_{i=1}^{N} \subseteq \mathbb{R}^{n} \times \mathbb{R}^m$, one option is to learn the reconstruction map $\Phi^{-1}$ by parameterizing $\mathcal{R}
_{\theta}: \mathbb{R}^{m}\to \mathbb{R}^{n}$ in some function space $\mathcal{F}$ and optimizing the parameters $\theta\in \Theta\subseteq \mathbb{R}^p$ using the pointwise mean-squared error (MSE) reconstruction loss
\begin{equation}\label{eq:pointwise}
    \mathcal{L}_{\text{p}}(\theta)= \frac{1}{N}\sum_{i=1}^{N} |x_i - \mathcal{R}_{\theta}(\Phi(x_i))|^2\,.
\end{equation} While \eqref{eq:pointwise} is efficient and simple to implement, it is also prone to overfitting noise in the training data, especially when the available samples are sparse and limited. 

Different from \eqref{eq:pointwise}, we propose to consider data of the form $\{(\mu_i,\Phi\# \mu_i)\}_{i=1}^{K} \subseteq \mathcal{P}_2(\mathbb{R}^{n})\times \mathcal{P}_2(\mathbb{R}^m)$ and instead use the measure-theoretic objective
\begin{equation}\label{eq:distribution}
\mathcal{L}_{\text{m}}(\theta)= \frac{1}{K}\sum_{i=1}^{K} \mathfrak{D}\left( \mu_i,\mathcal{R}_{\theta}\# (\Phi\# \mu_i)\right),
\end{equation}
where $\mathfrak{D}:\mathcal{P}_2(\mathbb{R}^{n})\times \mathcal{P}_2(\mathbb{R}^{n})\to \mathbb{R}$ is a metric or divergence on the space of probability measures. 
 Theorem~\ref{thm:main_embedding} indicates that for a suitable parameterization of $\mathcal{R}_{\theta}$, the loss \eqref{eq:distribution} can indeed be reduced to zero in a noise-free setting. Moreover, while in the pointwise loss~\eqref{eq:pointwise} we seek to recover a map between $\mathbb{R}^m$ and $\mathbb{R}^n$, in \eqref{eq:distribution} we instead search for a map between the corresponding probability spaces $\mathcal{P}_2(\mathbb{R}^m)$ and $\mathcal{P}_2(\mathbb{R}^n),$ which is parameterized by the pushforward of some function that maps $\mathbb{R}^m$ to $\mathbb{R}^n$. Theorem~\ref{thm:main_embedding} guarantees the existence of such a map between the probability spaces $\mathcal{P}_2(\mathbb{R}^m)$ and $\mathcal{P}_2(\mathbb{R}^n)$ with suitable regularity properties, i.e., it is a smooth embedding in the measure-theoretic sense discussed in Section~\ref{sec:measure_embedding}. 
 
 In applications, one commonly only has access to the pointwise data $\{(x_i,\Phi(x_i))\}_{i=1}^{N}$, and thus the measure data $\{(\mu_i,\Phi\# \mu_i)\}_{i=1}^{K}$ must be constructed based upon the pointwise data. Similar to~\citep{kirtland2023unstructured}, we  use $\texttt{k}$-means clustering to partition the time-delayed trajectory $\{\Phi(x_i)\}_{i=1}^{N}$ into Voronoi cells $\{C_i\}_{i=1}^{K}$, and we define for each $1\leq i \leq K$ the discrete measure
\begin{equation}\label{eq:measure}
    \mu_i:= \frac{1}{N_i}\sum_{j=1}^{N_i} \delta_{x^{i}_j}\in \mathcal{P}_2(\mathbb{R}^n), 
\end{equation}
where $\,\,N_i:=|\{ 1\leq k \leq N :\Phi(x_k)\in C_i\}|$ and $\{x_{j}^{i}\}_{j=1}^{N_i}$ denotes the samples in $\mathbb{R}^n$ such that $\Phi(x_{j}^{i}) \in C_i$,  $j = 1,\ldots, N_i$. If $\{ x_i\}_{i=1}^{N}$ are samples from a long trajectory whose underlying flow admits a physical invariant measure (or Sinai--Ruelle--Bowen measure~\citep{young2002srb}) $\nu$, then the measure $\mu_i$ defined in~\eqref{eq:measure} approximates $\nu|_{\Phi^{-1}(C_i)}$. This is the restriction of the measure $\nu$ to the set $\Phi^{-1}(C_i)$, i.e., a conditional distribution, provided that $\nu(\Phi^{-1}(C_i)) > 0$.

We also remark that the loss functions \eqref{eq:pointwise} and 
\eqref{eq:distribution} can be modified to account for the case when one is interested in learning the dynamics $\Phi(x_i)\mapsto \Phi(x_{i+1})$ in delay coordinates. For this related problem, the pointwise loss becomes 
\begin{equation}\label{eq:pointwise2}
    \mathcal{L}_{\textrm{p}}(\theta) = \frac{1}{N}\sum_{i=1}^N|\Phi(x_{i+1}) - \mathcal{T}_{\theta}(\Phi(x_i))|^2,
\end{equation}
where $\mathcal{T}_{\theta}$ is a neural-network parameterized flow map representing the dynamics in time-delay coordinates. Similarly, the measure-theoretic analog is 
\begin{equation}\label{eq:distribution2}
\mathcal{L}_{\text{m}}(\theta)= \frac{1}{K}\sum_{i=1}^{K} \mathfrak{D}\left( \mathcal{T}\#(\Phi\#\mu_{i}),\mathcal{T}_{\theta}\# (\Phi\# \mu_i)\right),
\end{equation}
where $\mathcal{T}$ is defined in \eqref{eq:delay_dynamics}. Notably, both \eqref{eq:pointwise2} and \eqref{eq:distribution2} can be evaluated without having direct access to $\mathcal{T}$ and without observing the full state of the dynamical system. While not explored in this work, 
one can also leverage the inherent shift-based structure of the delay-coordinate dynamics $\mathcal{T}$ to restrict the hypothesis space of flow maps in learning $\mathcal{T}_{\theta}$.

\subsubsection{Discussion}

Enforcing the measure-theoretic objective \eqref{eq:distribution} bears similarities to the approaches in~\citep{kirtland2023unstructured,araki2021grid}, where the reconstruction map $\mathcal{R}_{\theta}$ is learned by averaging the full state over each cluster in the reconstruction space and then linearly interpolating between these averages in the delay coordinate. While these methods ensure that $\mathcal{R}_{\theta}\# (\Phi\# \mu_i)$ and $\mu_i$ agree in expectation, our measure-theoretic approach is designed to match not only the expectation but also all moments of the measures (see Eqn.~\eqref{eq:distribution}).

Here, we discuss the relationship between the pointwise and measure-theoretic loss functions in more detail. If the distributional loss $\mathcal{L}_{\text{m}}$ (see Eqn.~\eqref{eq:distribution}) is reduced to zero, then in general, the pointwise loss $\mathcal{L}_{\text{p}}$ (see Eqn.~\eqref{eq:pointwise}) may still be large. As the diameter of each partition element $C_i$ decreases, this discrepancy becomes small, and in the limit when $\mu_i = \delta_{x_i}$, the loss functions $\mathcal{L}_{\text{p}}$ and $\mathcal{L}_{\text{m}}$ are equivalent for a suitable choice of $\mathfrak{D}$, e.g., the squared Wasserstein distance $\mathfrak{D} = W_2^2$. Therefore, $\mathcal{L}_{\text{m}}$ should be viewed as a relaxation of $\mathcal{L}_{\text{p}}$, where the diameter of each partition element controls the extent to which pointwise errors in the measure-based reconstruction are permitted. In practice, the partition elements' diameter should be chosen according to the number of data points and the amount of noise present; see~\cite[Fig.~6]{kirtland2023unstructured} for a similar discussion. 

It is also worth noting that there may be several minimizers of $\mathcal{L}_{\text{m}}$, depending on how the measures $\{\mu_i\}_{i=1}^K$ are constructed. In general, any minimizer of $\mathcal{L}_{\text{p}}$ is a minimizer of $\mathcal{L}_{\text{m}}$.
However, when noise is present in the training data, any minimizer of $\mathcal{L}_{\text{p}}$ will be highly oscillatory and challenging to approximate. Thus, if $\mathcal{R}_{\theta}$ is a neural network, its spectral bias creates an implicit regularization during training which will favor smoother, less oscillatory, solutions~\citep{rahaman2019spectral}.  Furthermore, it is well-established that loss functions comparing probability measures, e.g., $f$-divergence and the Wasserstein metric, are less sensitive to oscillatory noise compared to pointwise metrics like MSE~\citep{engquist2020quadratic,ernst2022wasserstein}. Hence, the minimizers of~$\mathcal{L}_{\text{m}}$ tend to exhibit better generalization properties than those of $\mathcal{L}_{\text{p}}$.

{Both the diameter of the cells $C_i$ and the number of samples $N_i$ residing in each cell contribute potential sources of error when using the measure-based loss~\eqref{eq:distribution}. As the number of samples $N_i$ increases and the diameters of the cells $C_i$ decrease, the accuracy of the measure-based reconstruction is expected to increase in an ideal noise-free setting. When the available data is noise-free, the use of empirical measures supported on finite diameter cells may unnecessarily hinder accuracy and the pointwise loss \eqref{eq:pointwise} is generally preferred. However, when the data is corrupted by noise, the pointwise loss \eqref{eq:pointwise} tends to overfit the observed data, while we find the flexibility of the measure-based matching \eqref{eq:distribution} to be more robust. We also remark that the construction~\eqref{eq:measure} of empirical measures is only one possible way of forming measure-valued data $\mu_i$ for training the loss \eqref{eq:distribution}. In this work, we  choose to consider localized measures supported on Voronoi cells, as the cell diameters can be tuned to effectively balance approximation accuracy and noise-tolerance.}

Throughout our numerical experiments, $\mathcal{R}_{\theta}$ is parameterized as a standard feed forward neural network, and the weights and biases $\theta$ are optimized using Adam \citep{kingma2014adam}. {Similarly, the delay-coordinate dynamics $\mathcal{T}_{\theta}$ are defined by the flow map of a neural network-parameterized vector field.} We choose $\mathfrak{D}$ to be the Maximum Mean Discrepancy (MMD)~\citep{gretton2012kernel} based on either the kernel {$k_{1}(x,y)= -\|x-y\|_2$} or the Gaussian kernel {$k_{2}(x,y)= \exp(-\|x-y\|^2_2/2\sigma^2)$}. We note that the MMD based upon the kernel {$k_{1}(x,y)$} is also known as the Energy Distance MMD. We use the \texttt{Geomloss} library to compute $\mathfrak{D}$, which is fully compatible with PyTorch's autograd engine \citep{feydy2019interpolating}.  There are several times in our numerical experiments where the  chosen embedding dimension $m\in \mathbb{N}$ does not satisfy the inequality $m \geq 2d+1$ introduced in Theorem \ref{thm:classic_takens}. While Theorem \ref{thm:classic_takens} provides an upper bound on the minimum required embedding dimension, it is often the case that lower-dimensional embeddings can be achieved. Several works have studied the problem of estimating such an embedding dimension numerically from partially observed time-series data. In this work, we use \texttt{teaspoon}~\citep{myers2020teaspoon} to inform our selection of the embedding parameters $\tau > 0$ and $m \in \mathbb{N}$  using both the mutual information~\citep{fraser1986independent} and Cao's method~\citep{cao1997practical}. Notably, \cite{cao1997practical} estimates the minimum embedding dimension of the Lorenz-63 system to be $m = 3$, while the upper bound in Theorem \ref{thm:classic_takens} suggests taking $m \geq 7$.
\subsection{First Examples: Noisy Chaotic Attractors}\label{subsec:noisydata}
\subsubsection{{State Reconstruction}}\label{subsubsec:recon}
We begin by studying our measure-theoretic approach to state reconstruction on the Lorenz-63 system~\citep{tucker1999lorenz}, the R\"ossler system~\citep{rossler1976equation}, and a four-dimensional Lotka--Volterra model~\citep{vano2006chaos}. For these dynamical systems, we select standard values for the systems' parameters, which are known to produce chaotic trajectories; see Appendix~\ref{sec:exps} for the system equations and precise parameter choices.

The task is to reconstruct the full state of these systems using partial observations. {In particular, we aim to learn the inverse embedding map $\Phi^{-1}$ using input-output training data from both the delay state of a partial observation and the system's full state.} For each system, we first simulate a long trajectory, form the delayed state based on a scalar observable, and split the data into training and testing components. As explained in Section~\ref{subsec:motivation}, the measures $\{\Phi\#\mu_i\}_{i=1}^K$ are given by conditioning a long trajectory in time-delay coordinates on various regions of the attractor, which in practice is implemented by a \texttt{k}-means clustering algorithm. 

We remark that both the pointwise approach~\eqref{eq:pointwise} and the measure-based approach~\eqref{eq:distribution} can achieve accurate reconstructions when the data is noise-free; see Appendix~\ref{sec:exps} for an experiment demonstrating the measure-based approach applied to clean data. However, our method proves particularly advantageous when dealing with sparse and noisy data. To demonstrate this, we compare the performance of the measure-based method with pointwise matching in Fig.~\ref{fig:noisecompare} using imperfect data. In these tests, extrinsic noise is applied to the entire state, including the time series that forms the time-delay coordinates. From these corrupted inputs and outputs, we learn the full-state reconstruction map. Although neither method is expected to achieve perfect reconstruction, the measure-based approach yields smoother results, whereas the pointwise approach tends to overfit the noise.

\begin{figure}
    \centering
   \subfloat[Lorenz-63 system reconstructions]{ \includegraphics[width = .9\textwidth]{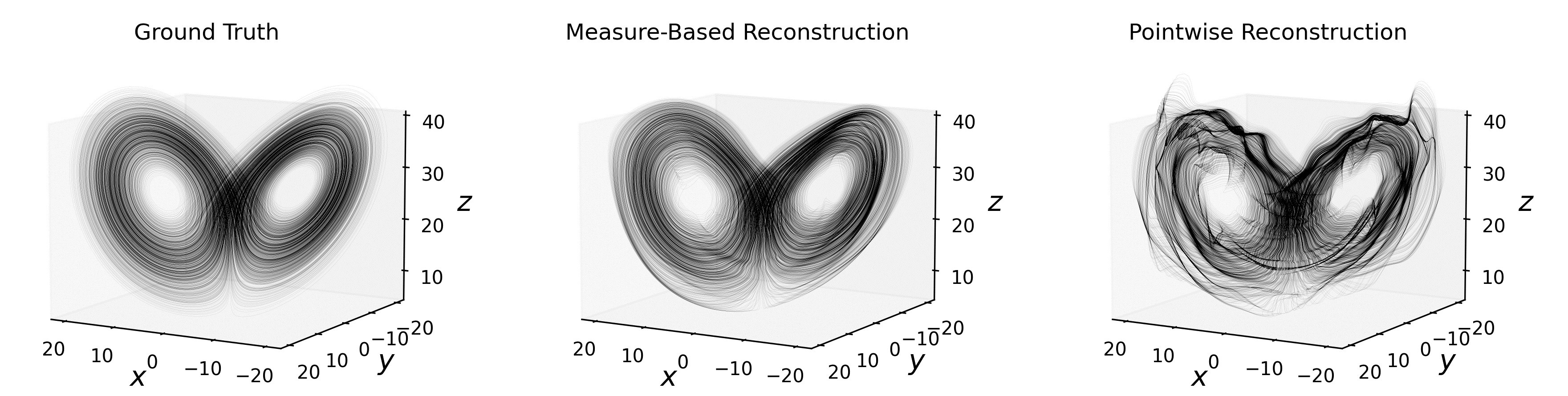}\label{fig:3a}}\\
   \vspace{.5cm}
    \subfloat[R\"ossler system reconstructions]{ \includegraphics[width = .9\textwidth]{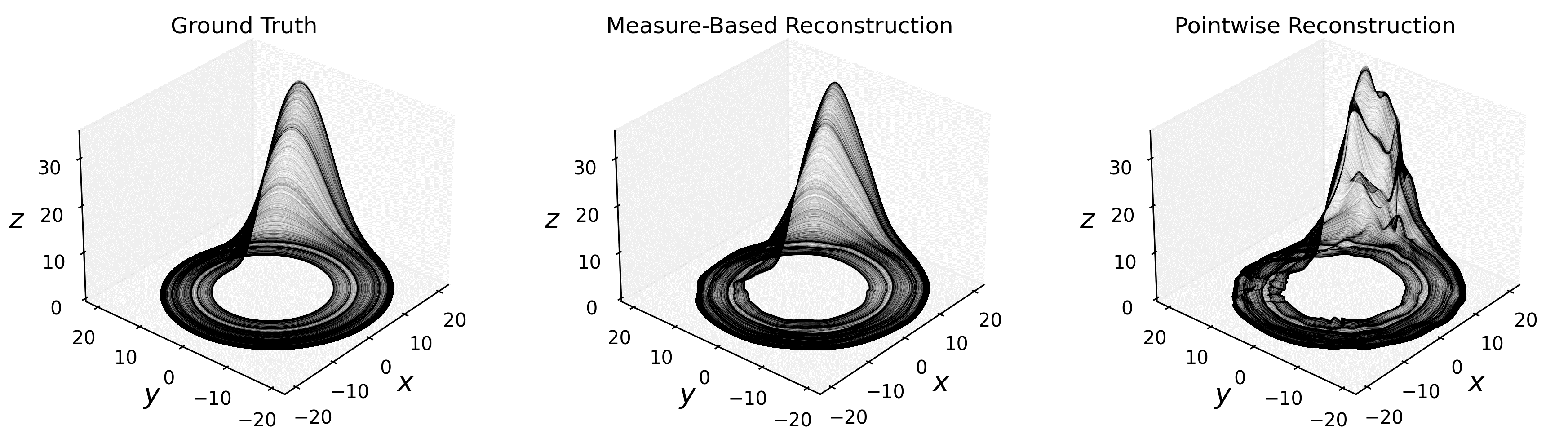}\label{fig:3b}}\\
       \vspace{.5cm}
       \subfloat[Lotka--Volterra reconstructions]{ \includegraphics[width = .9\textwidth]{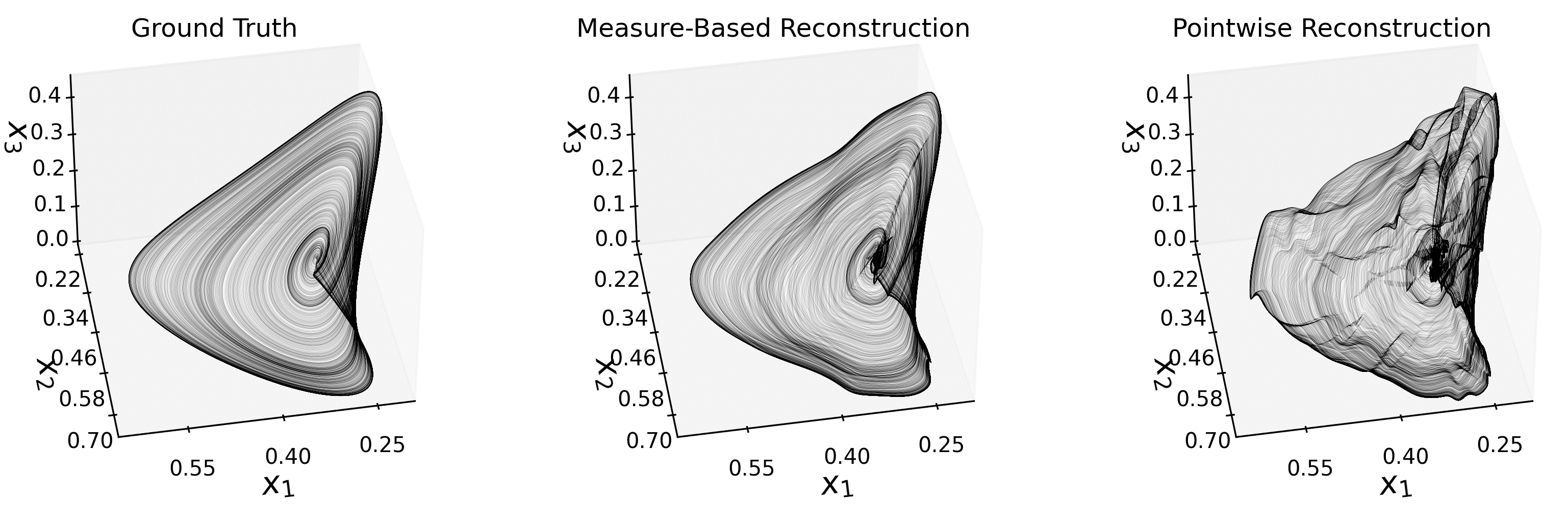}\label{fig:3c}}
    \caption{Visualizations of the learned full-state reconstruction map with sparse and noisy data for systems discussed in Section~\ref{subsec:noisydata}. The comparison includes both pointwise and measure-based approaches against the ground truth.}
    \label{fig:noisecompare}
\end{figure}

\begin{table}[h!]
    \centering
    \renewcommand{\arraystretch}{1.5}
      \begin{tabular}{|c|c|c|}
    \hline
      System & Pointwise MSE & Measure MSE \\
      \hline
     Lorenz & $7.15\times 10^{-1}$ & $2.84\times 10^{-1}$ \\ 
     R\"ossler & $3.99\times 10^{-1}$ & $8.34\times 10^{-2}$ \\
     Lotka--Volterra & $2.10\times 10^{-4}$ & $9.45\times 10^{-5}$ \\     \hline
    \end{tabular}
    \caption{Mean-squared error (MSE) for the reconstructions in Fig.~\ref{fig:noisecompare}. The measure-based reconstruction has lower error for all tests.\label{tab:1}}
\end{table}
For the experiments shown in Fig.~\ref{fig:noisecompare}, the training data consists of $2\times 10^3$ input-output pairs, which are obtained as random samples from a long trajectory. The data is corrupted with i.i.d.~extrinsic Gaussian noise samples with covariance matrices $\Sigma_{\texttt{Lorenz}} = 0.1 I$, $\Sigma_{\texttt{R\"ossler}} =0.1I $, and $\Sigma_{\texttt{Lotka-Volterra}}= 5\times 10^{-5} I.$ It is important to note that the noise in the time-delay coordinate may exhibit potential correlations, as the time series used to form these coordinates---taken as the projection of the dynamics onto the $x$-axis---is embedded after the extrinsic noise is applied. The noisy delay state is then partitioned evenly into $20$ cells via a constrained \texttt{k}-means routine~\citep{bradley2000constrained}, from which we then form noisy approximations to the measures following~\eqref{eq:measure}. Across all tests, the same four-layer neural network with hyperbolic tangent activation, $100$ nodes in each layer, and a learning rate of $10^{-3}$ is trained for $5\times 10^4$ steps. After training the networks on the noisy data, the accuracy of the learned reconstruction map is assessed by applying the network to a clean signal in the time-delay coordinate system. The MSE for the reconstructions visualized in Fig.~\ref{fig:noisecompare} is summarized in Table~\ref{tab:1}. For each experiment, the measure-based reconstruction achieves lower error.

{
For the Lorenz-63 experiment, we also estimate the Lipschitz constant of the learned inverse embedding map (parameterized by a neural network) in both cases when the data is clean and noisy. The Lipschitz constant is approximated as $\sup_{z_i}\|\nabla \mathcal{R}_{\theta}(z_i)\|_{2}$, where $\nabla \mathcal{R}_{\theta}$ is computed via automatic differentiation and $\{z_i\}_{i=1}^{10^5}$ are uniform samples from the unit cube, which contains the normalized training data for neural network training. We find that the Lipschitz constants of the pointwise approach when trained on clean and noisy data are 7.12 and 19.97, respectively. Moreover, the Lipschitz constants of the measure-theoretic approach are 7.56 and 7.09, respectively. The reported Lipschitz constants are the median values obtained from 10 neural network trainings each with different random initializations. While the Lipschitz constants of the measure-theoretic and pointwise approaches are comparable when the data is clean, the pointwise objective has a significantly larger Lipschitz constant when trained on noisy data. This observation is consistent with the oscillatory behavior of the pointwise reconstructions in Fig.~\ref{fig:3a}.}

 \subsubsection{Time-Series Prediction}\label{subsubsec:predict}
While in Section \ref{subsubsec:recon} we studied the problem of learning the reconstruction map from the delay state to the full state, we now focus on the problem of time-series prediction. In this setting, we first form the the delay-coordinates from a partially observed scalar-valued time-series. We then aim to learn the dynamics in delay coordinates using either the pointwise \eqref{eq:pointwise2} or measure-theoretic \eqref{eq:distribution2} objectives. After learning the map, we simulate the learned delay-coordinate dynamics forward in time to assess the model's accuracy. This can be turned into a scalar time-series prediction simply by projecting the modeled delay-coordinate trajectory onto a single coordinate axis.

\begin{figure}[h!]
  \centering
   \subfloat[Visualizing the 1-step prediction error in delay coordinates for both the pointwise and measure approaches when trained on noisy data.]{\includegraphics[width = .8\textwidth]{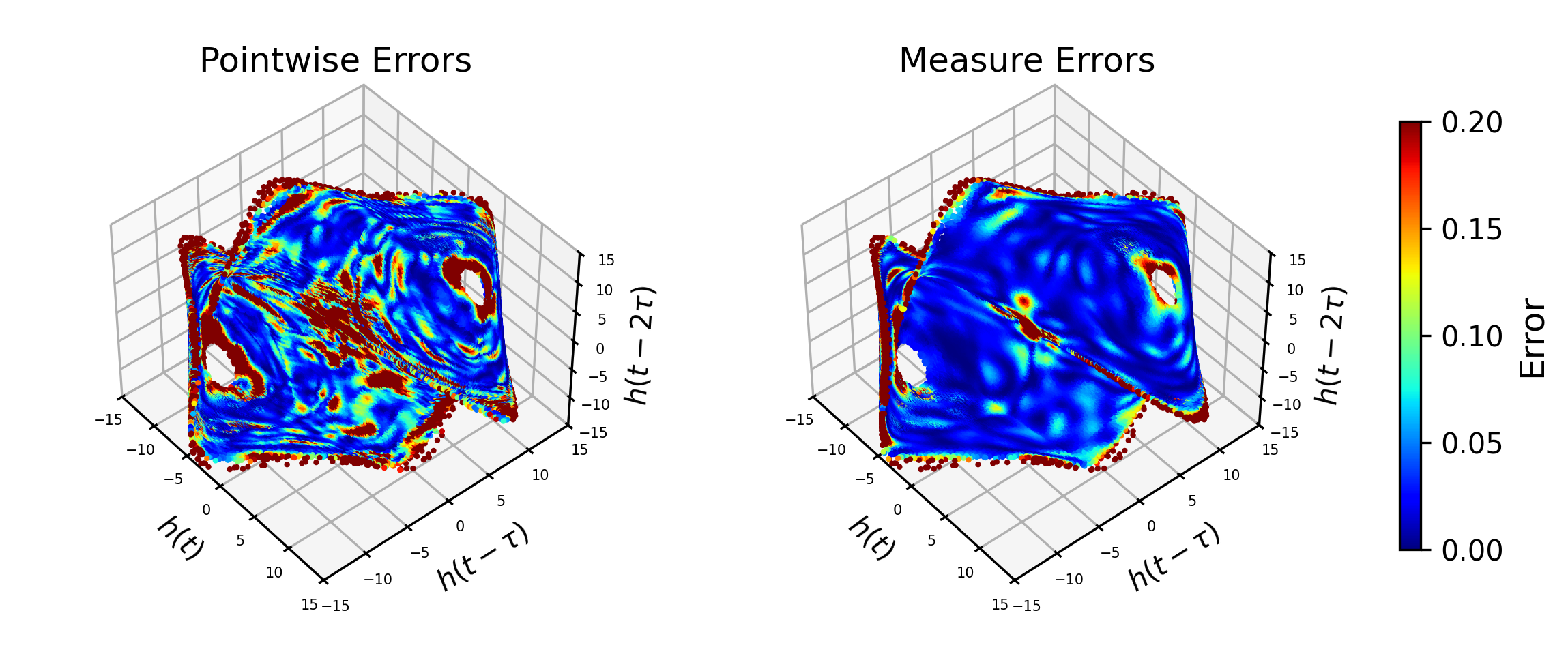}\label{fig:pred3}}\\
   \subfloat[\blue{Simulated delay-coordinate trajectories from the ground truth Lorenz-63 system (left) a model trained on noisy data using the pointwise objective (middle) and a model trained on noisy data using the measure-theoretic objective (right).}]{ \includegraphics[width = .8\textwidth]{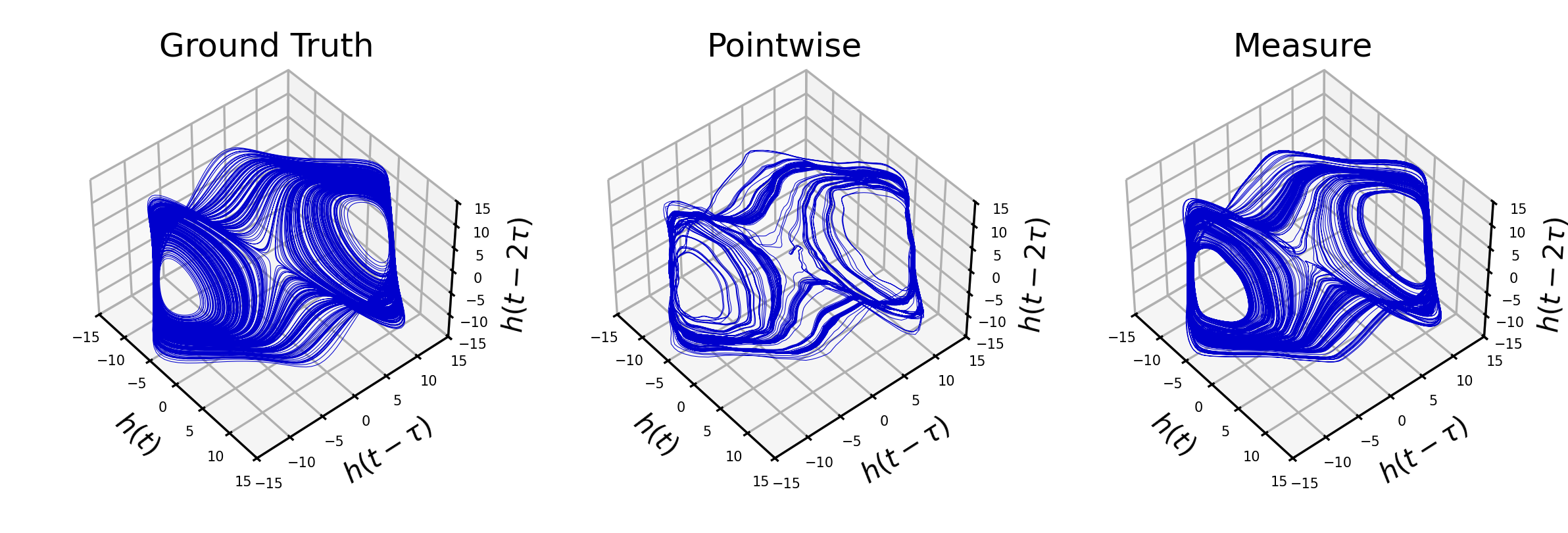}\label{fig:pred1}}\\
    \caption{\blue{Performing time-series prediction for the partially observed Lorenz-63 system from sparse and noisy data. Fig.~\ref{fig:pred3} shows the distribution of short-time prediction errors in delay coordinates and Fig.~\ref{fig:pred1} shows long trajectories of the learned delay-coordinate dynamics.}}
    \label{fig:lorenz_predict}
\end{figure}

 \blue{In Fig.~\ref{fig:lorenz_predict}, we visualize results for performing time-series prediction of the scalar-valued Lorenz-63 time-series $h_i = x_i\cdot e_1$, where $e_1\in \mathbb{R}^3$ is the first standard unit basis vector. After forming a long delay-coordinate trajectory in dimension $m = 4$ with $\tau = 0.2$, we assemble a sparse and noisy dataset by randomly sampling $N = 2\times 10^3$ initial conditions $\{\Phi(x_i)\}_{i=1}^N$ in the delay-coordinates. For each chosen initial condition, we observe its final position $\{\mathcal{T}(\Phi(x_i))\}_{i=1}^N$ in delay-coordinates after a time of $\Delta t = 0.1$ has elapsed. Thus, in this case, $\mathcal{T}$ corresponds to the time-$\Delta t$ (with $\Delta t = 0.1$) flow map of the delay-coordinate dynamics; see \eqref{eq:delay_dynamics}. Based on these input-ouput training pairs, we use a constrained \texttt{k}-means routine to partition the initial and final positions into $K = 10$ distinct empirical measures, i.e., we construct the measure-valued data $\{(\mathcal{T}\#(\Phi\#\mu_i), \Phi\#\mu_i)\}_{i=1}^K$. We then parameterize the delay-coordinate dynamics $\mathcal{T}_{\theta}$ as the time-$\Delta t$ flow map of a neural network-parameterized velocity with node sizes $4\rightarrow 100 \rightarrow 100 \rightarrow 100 \rightarrow 4$ and a hyperbolic tangent activation function. Throughout, all models are trained for $5\times 10^4$ steps using the Adam optimizer with an initial learning rate of $10^{-3}$ which decays on an exponential schedule to $10^{-5}$. We aim to learn $\mathcal{T}_{\theta}$ using both the pointwise and measure-theoretic objectives, as well as from clean data and noisy data. In the noisy case, the one-dimensional time series is corrupted by Gaussian noise with covariance $0.05$ before the delay coordinates are formed. For the measure-based learning \eqref{eq:distribution2} we choose $\mathfrak{D}$ as the MMD based on the kernel $k_1$.}

 \blue{The 1-step errors of the trained models, given by $|\mathcal{T}_{\theta}(\Phi(x_i)) - \mathcal{T}(\Phi(x_i))|^2$, are visualized in Fig.~\ref{fig:pred3} for the $\mathcal{T}_{\theta}$ trained under both the pointwise loss~\eqref{eq:pointwise2} and the measure-based loss~\eqref{eq:distribution2}. Fig.~\ref{fig:pred1} then displays simulated delay-coordinate trajectories of both the pointwise and measure-based models after training on the sparse and noisy dataset. While each objective only trains on a 1-step loss seeking to fit $\mathcal{T}_{\theta}$, we can integrate the learned vector field over arbitrary time horizons to generate longer trajectories. As shown in Fig.~\ref{fig:pred1}, the simulated trajectory from the measure-theoretic objective appears smoother and has better visual agreement with the ground truth trajectory.}

\begin{table}[h!]
    \centering
    \renewcommand{\arraystretch}{1.5}
      \begin{tabular}{|c|c|c|}
    \hline
       & Clean data & Noisy data \\
      \hline
     Pointwise & $(7.70  \pm 2.10) \times 10^{-4}$ & $(1.42 \pm 0.09)\times 10^{-1}$ \\
     Measure & $(2.66 \pm 0.26)\times 10^{-3}$ & $(4.83  \pm  0.27)\times 10^{-2}$ \\\hline
    \end{tabular}
    \caption{\blue{Comparing the prediction error between the pointwise \eqref{eq:pointwise2} and measure \eqref{eq:distribution2} approaches for forecasting the partially observed Lorenz-63 time-series. The error statistics are accumulated over 5 trials with different randomized neural network initializations.}}\label{tab:predict}
\end{table}

\subsection{NOAA Sea Surface Temperature Reconstruction}\label{subsec:realdata}

We now consider the problem of reconstructing the NOAA Sea Surface
Temperature (SST) from partial measurement data~\citep{AnImprovedInSituandSatelliteSSTAnalysisforClimate}. The SST dataset consists of weekly temperature measurements sampled at a geospatial resolution of $1\degree$; see Appendix~\ref{sec:exps} for a visualization. Our partial observation of the full SST dataset $\{\mathbf{z}(t_i)\}\subseteq \mathbb{R}^{44219}$ consists of the temperature time-series $\{x(t_i)\}\subseteq \mathbb{R}$ recorded at the location $(156 \degree, 40 \degree )$. Our goal is to use the delay state corresponding to $\{x(t_i)\}$ to learn a reconstruction map for the full state $\{\mathbf{z}(t_i)\}$. {That is, we aim to learn $\Phi^{-1}$ which maps $\{x(t_i)\}$ into  $\{\mathbf{z}(t_i))\}$. The success of the learned reconstruction map is then evaluated by applying it to unseen testing data in delay coordinates.} A similar problem is considered in~\citep{callaham2019robust,maulik2020probabilistic}. %

The dataset is partitioned into training and testing sets, where the training set contains 355 snapshots ($\approx 5$ years) and the testing set contains 1415 snapshots $(\approx 27$ years). Based on the time-series $\{x(t_i)\}$, we select a time delay of $\tau = 12$ (weeks) and an embedding dimension $m = 7$.  To reduce the computational cost in learning, similar to~\citep{maulik2020probabilistic,callaham2019robust}, we subtract off the temporal mean of the SST dataset and parameterize the full state of the system by the first $N_{\texttt{POD}}$ time-varying POD coefficients, $\{\alpha_{k}(t)\}_{k=1}^{N_{\texttt{POD}}}$, which are obtained via the method of snapshots; see \cite[Section 3.1]{maulik2020probabilistic}. That is, we perform the model reduction 
\begin{equation}\label{eq:POD}
    \mathbf{z}(t_i) - \overline{\mathbf{z}} = \sum_{k=1}^{N_{\texttt{POD}}} \alpha_k(t_i) \mathbf{m}_k ,\,\,\, \alpha_k(t_i) \in \mathbb{R}, \,\,\, \mathbf{m}_k\in \mathbb{R}^{44219},
\end{equation}
where $\overline{\mathbf{z}}\in \mathbb{R}^{44219}$ is the temporal mean of $\{ \mathbf{z}(t_i)\}$ and $\{\mathbf{m}_k\}_{k=1}^{N_{\texttt{POD}}}$ are the first $N_{\texttt{POD}}$ modes. We set $N_{\texttt{POD}} = 200$ and aim to learn the POD coefficient reconstruction map $\mathcal{R}_\theta:\mathbb{R}^{7}\to \mathbb{R}^{200}$ parameterized by $\theta$, given the paired data 
\begin{equation}\label{eq:POD_point}
    \left(x(t_i),x(t_i-\tau),\dots, x(t_i-6\tau)\right) \mapsto  \left(\alpha_1(t_i),\dots \alpha_{200}(t_i) \right).
\end{equation}

For this problem, all training samples are normalized via an affine transformation, such that the $L^{\infty}$ norm of the data vectors is at most 1.
We train the neural network parameterization $\mathcal{R}_\theta$ using both the pointwise (see Eqn.~\eqref{eq:pointwise}) and measure-based (see Eqn.~\eqref{eq:distribution}) approaches. The pointwise approach seeks to directly enforce the relationship~\eqref{eq:POD_point} through the mean-squared error, whereas the measure-based approach partitions the delay state into clusters and aims to push forward the empirical measure in each cluster into the corresponding measure in the POD space. We use a constrained \texttt{k}-means routine to evenly partition the delay state into $5$ clusters. A visualization of the 5 clusters can be found in Appendix~\ref{sec:exps}. We evaluate the performance of the learned $\mathcal{R}_{\theta}$ by {reconstructing} the time-varying POD coefficients $\{\alpha_k(t_i)\}$ for the testing set, which are then used to generate the full state $\{\mathbf{z}(t_i)\}$ according to~\eqref{eq:POD}. {More specifically, we reconstruct the $\{\alpha_k(t_i)\}$ by applying the trained model $\mathcal{R}_{\theta}$ to the delay state based on the time-series $\{x(t_i)\}.$}

We note that the challenge of learning the reconstruction map $\mathcal{R}_\theta:\mathbb{R}^7\to \mathbb{R}^{200}$ is exacerbated by the sparsity of the available training data. Specifically, we aim to learn a $200$-dimensional map using only $355$ training examples. Due to this data sparsity, we utilize MMD based on the Gaussian kernel $k_{2}(x,y) = \exp(-\|x-y\|_2^2/2\sigma^2)$ with $\sigma = 3$. The choice of a relatively large $\sigma$ acts as a form of regularization, helping to mitigate overfitting when dealing with sparse samples~\citep{feydy2020geometric}.

\begin{table}[h!]
\centering
\renewcommand{\arraystretch}{1.5}
      \begin{tabular}{|c|c|c|c|c|}
    \hline
      \backslashbox{Method}{MSE}& Initial & $10^3$ iters & $5\times 10^3$ iters & $2\times 10^4$ iters \\
      \hline
   Measure & $13.81 \pm 0.98$ & $0.72 \pm 0.01$ & $0.73 \pm 0.02$ & $0.80 \pm 0.02$ \\ 
   Pointwise & $13.95 \pm 1.39$ & $0.70 \pm 0.01$ & $0.82 \pm 0.01$ & $1.31 \pm 0.02$\\
    \hline
    \end{tabular}
    \vspace{.2cm}
    \caption{Reconstruction mean squared error (MSE) for the SST dataset after various numbers of training iterations. The measure-based approach (see Eqn.~\eqref{eq:distribution}) is less prone to overfitting than the pointwise approach (see Eqn.~\eqref{eq:pointwise}).}
    \label{tab:2}
\end{table}
   
In Fig.~\ref{fig:SST_RECON}, we visualize the pointwise and measure-based reconstructions of the SST example at testing weeks 425, 550, 675, and 800  after training both models for $25{,}000$ steps. One can observe that the measure-based results align more closely with the ground-truth snapshots, while the pointwise reconstructions exhibit numerous nonphysical oscillations.

\begin{figure}[h!]
  \centering
   \subfloat[SST Reconstruction at testing week 425]{ \includegraphics[width = .9\textwidth]{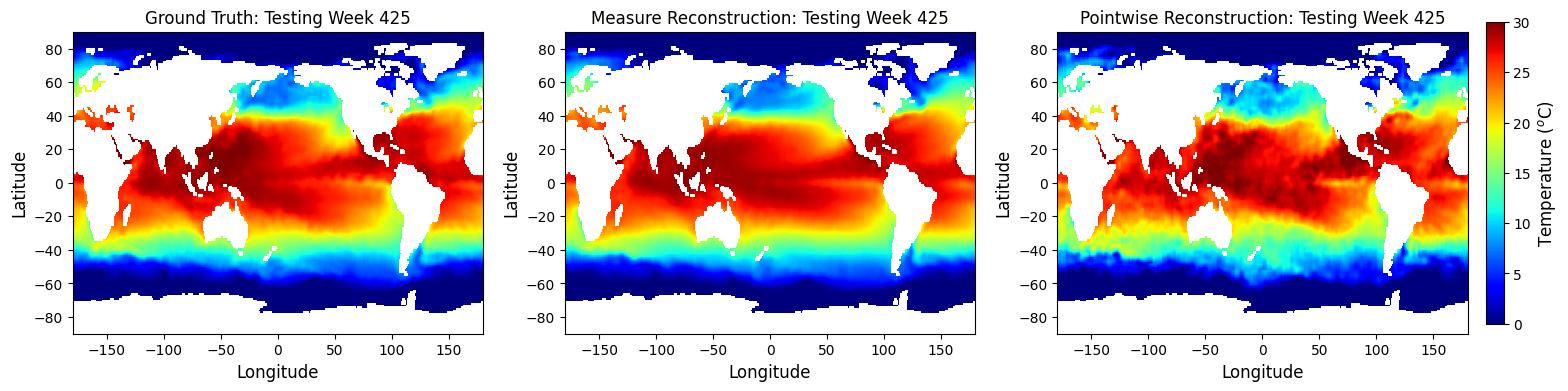}}\label{fig:5a}\\

   \subfloat[SST Reconstruction at testing week 550]{ \includegraphics[width = .9\textwidth]{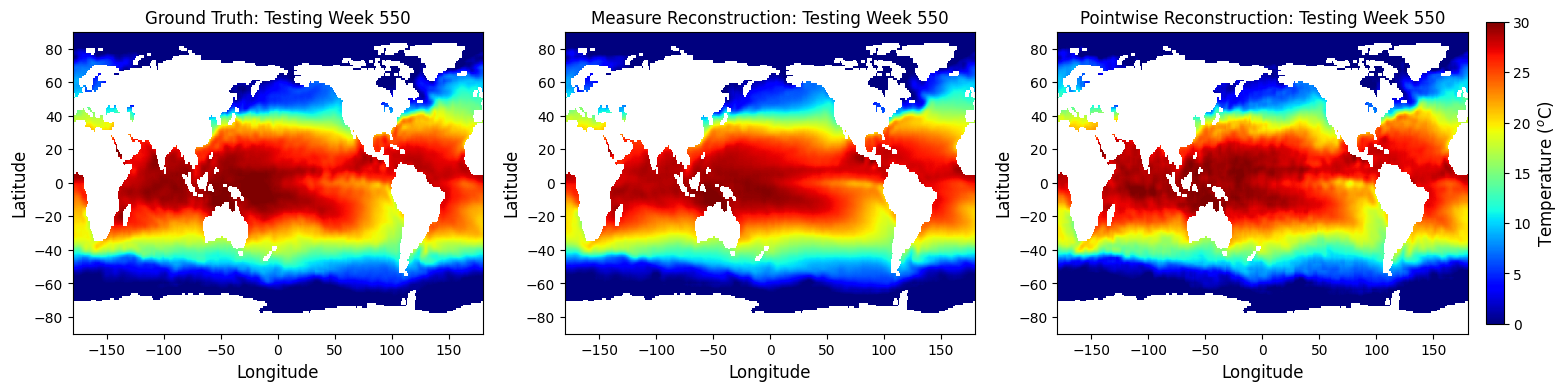}}\label{fig:5b}\\

   \subfloat[SST Reconstruction at testing week 675]{ \includegraphics[width = .9\textwidth]{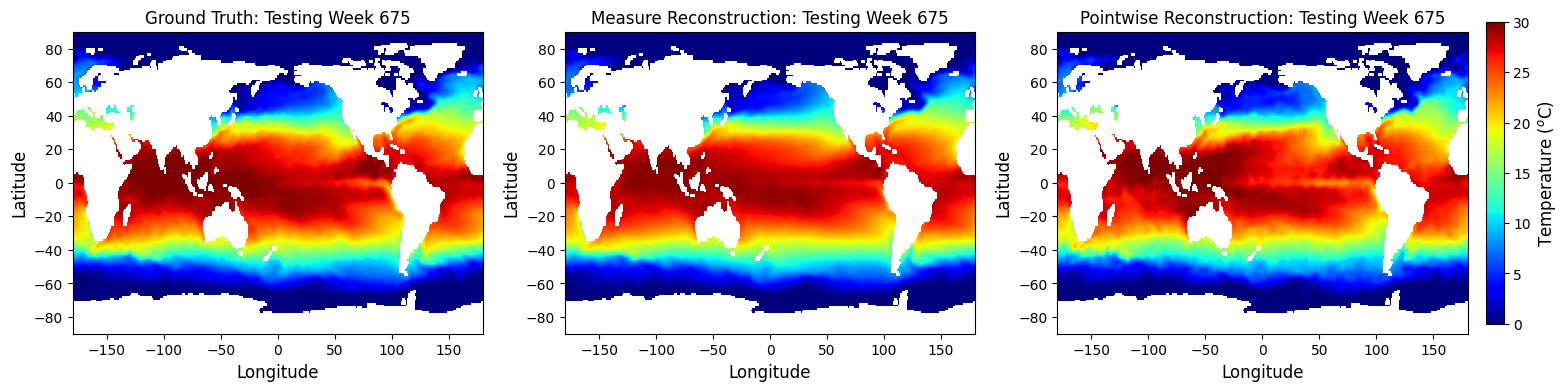}}\label{fig:5c}\\

    \subfloat[SST Reconstruction at testing week 800]{ \includegraphics[width = .9\textwidth]{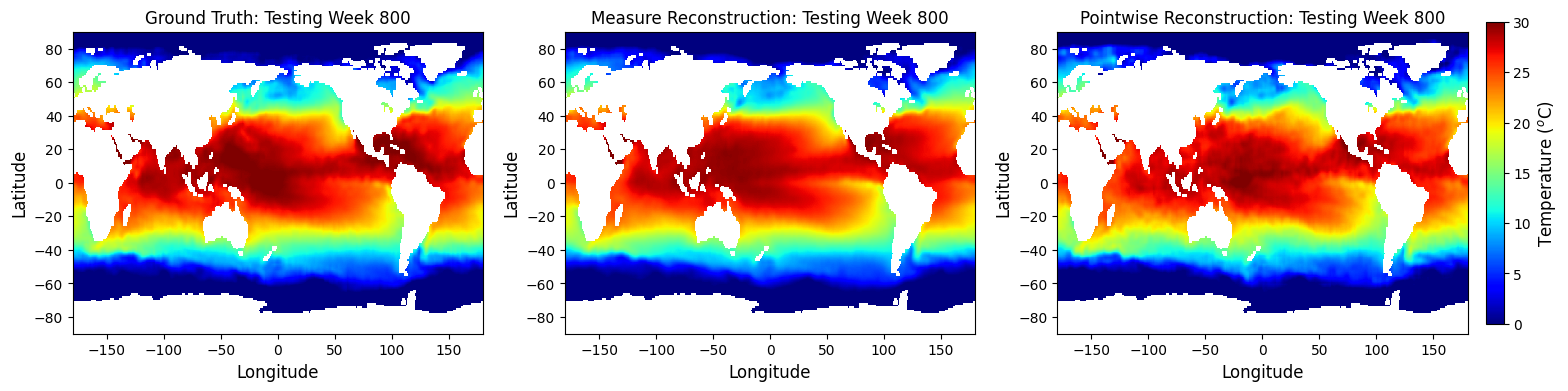}}\label{fig:5d}\\
    
    \caption{Visual comparison of the pointwise (see~\eqref{eq:pointwise}) and measure-based (see~\eqref{eq:distribution}) approaches to reconstructing the SST dataset at the testing weeks 425, 550, 675, and 800. The left column features the ground truth snapshot, the middle column shows the measure-based reconstruction, and the right column shows the pointwise reconstruction. }
    \label{fig:SST_RECON}
\end{figure}

\begin{table}
    \centering
      \begin{tabular}{|c|c|c|c|c|}
    \hline
      \diagbox{Method}{MSE} & Initial & $10^3$ iterations & $10^4$ iterations & $2 \times 10^4$ iterations \\
      \hline
      
    Gaussian MMD &  $158.15 \pm 0.27$ & $61.04 \pm 1.59$ & $61.22\pm 0.54$ & $67.91 \pm 0.49$ \\ 

    Energy MMD &  $158.15 \pm 0.27$ & $49.61 \pm 0.48$ & $67.85 \pm 0.41$ & $78.91 \pm 0.37$ \\ 
    
    Pointwise & $158.15 \pm 0.27$ & $51.00 \pm 0.19$ & $75.87 \pm 0.26$ & $84.85 \pm 0.25$\\
    \hline
    \end{tabular}
    \vspace{.2cm}
    \caption{Testing reconstruction mean squared error (MSE) for the ERA5 wind speed dataset after various iterations during training. For each loss function, the experiment was repeated $5$ times to approximate the mean and standard deviation for the reconstruction error.}
    \label{tab:3}
\end{table}

\begin{figure}
    \centering \includegraphics[width=.9\textwidth]{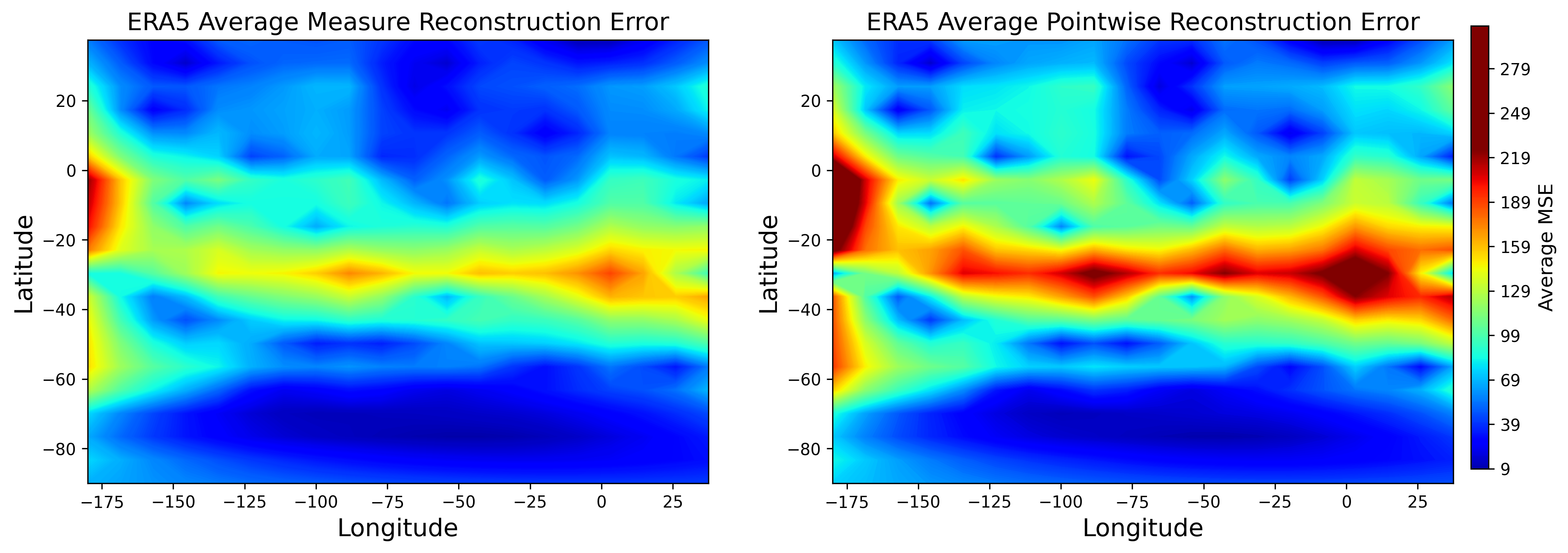}
    \caption{(Left) Spatial distribution of the average MSE for the measure-based reconstruction of the ERA5 dataset, based upon the Gaussian MMD. (Right) Spatial distribution of the average MSE for the pointwise reconstruction of the ERA5 dataset.}
    \label{fig:error_comparison}
\end{figure}

In Table~\ref{tab:2}, we compare the reconstruction errors of the pointwise and measure-based approaches after different numbers of training steps. Due to the sparsity of the training set, the pointwise approach is prone to overfitting, whereas the measure-based relaxation demonstrates greater robustness. After $2\times10^4$ training steps, the reconstruction error for the pointwise approach is approximately $1.6$ times larger than that of the measure-based approach. Both methods train a four-layer fully connected neural network with hyperbolic tangent activation, using the architecture $7 \rightarrow 100 \rightarrow 100 \rightarrow 100 \rightarrow 100 \rightarrow 200$ and a learning rate of $10^{-3}$. Each neural network  is trained $10$ times with different random initializations.

\subsection{ERA5 Wind Field Reconstruction}\label{subsec:ERA5}

Our final test reconstructs a portion of the ERA5 wind speed dataset using partial measurement data~\citep{https://doi.org/10.1002/qj.3803}. {Here, we will again aim to learn the full-state reconstruction map $\Phi^{-1}$ using training data from both the full state and a delay state corresponding to a partial observable. } We consider the ERA5 wind dataset sampled at a geopotential height of Z500, restricted to latitudes between $-180\degree$ and $-105\degree$ and longitudes between $-90\degree$ and $-15\degree$. The dataset is coarsened in both space and time, with measurements taken on approximately a $4\degree$ spatial grid with dimensions $20\times 20$, and separated by $6$ hours in time. For training, we use $2\times 10^3$ randomly sampled snapshots of both the $u$ and $v$ wind speed components (longitudinal and latitudinal, respectively) between 2000 and 2007. For testing, we consider the subsequent $5\times 10^3$ consecutive snapshots.

While the SST dataset studied in Section~\ref{subsec:realdata} exhibited strong periodic oscillations with one week between successive snapshots, the ERA5 wind speed dataset is sampled at a sub-daily rate and exhibits complex transient dynamics. Modeling these dynamics using partial observations from a single geospatial location, as done in Section~\ref{subsec:realdata}, is challenging. Similar to~\citep{maulik2020probabilistic,callaham2019robust}, we instead use a small number of randomly sampled sensors across the state space to collect partial measurement data. Unlike~\citep{maulik2020probabilistic,callaham2019robust}, we also consider time-lagged vectors corresponding to measurements at these spatial locations for performing the full-state reconstruction. This translates the problem of learning the full-state reconstruction map from a scalar-valued time-delayed observable into learning it from a vector-valued time-delayed observable. %

More specifically, the task involves learning the state reconstruction map $\mathbf{y}(t_i) \mapsto \mathbf{z}(t_i)$, where, for each fixed $t_i$, $\mathbf{y}(t_i) \in \mathbb{R}^{240}$ represents the time-delayed state across all observation locations, and $\mathbf{z}(t_i) \in \mathbb{R}^{800}$ is the full state of the system. We will now provide a detailed explanation of how the delayed state $\mathbf{y}(t_i)$ and the full state $\mathbf{z}(t_i)$ are constructed. The delay state is given by
\begin{equation}\label{eq:vec_delay}
    \mathbf{y}(t_i) = \left(\mathbf{x}(t_{i}),\mathbf{x}(t_{i}-\tau),\dots, \mathbf{x}(t_{i}-(m-1)\tau) \right)\in \mathbb{R}^{60m},
\end{equation}
where the vector $\mathbf{x}(t_i)\in \mathbb{R}^{60}$ represents the ensemble of partial observations at time $t_i$, i.e., 
\begin{equation}\label{eq:vec_partial}
   \mathbf{x}(t_i) = \left(u_{n_1}(t_i),\dots, u_{n_{30}}(t_i),v_{n_1}(t_i),\dots, v_{n_{30}}(t_i) \right) \in \mathbb{R}^{60}. 
\end{equation}
In \eqref{eq:vec_delay}, the time-delay and embedding dimension are heuristically chosen as $\tau = 6$ (hours) and $m = 4$. In \eqref{eq:vec_partial}, the indices $\{n_k\}_{k=1}^{30}$ correspond to 30 randomly sampled spatial locations at which we observe the longitudinal and latitudinal components of the wind speed. {While it was possible to obtain a reasonable model for the full state of the NOAA SST dataset from a single observable, the wind speed dynamics considered here exhibit significantly more complex spatio-temporal behavior. Thus, we find it necessary to use measurements from several spatial locations to perform accurate state reconstruction. The number 30 was also chosen heuristically.} The full longitudinal wind speed vector is $\mathbf{u}(t_i) = \left( u_1(t_i) , \dots, u_{400}(t_i) \right)\in \mathbb{R}^{400}$ representing values at the $20 \times 20$ spatial grid, 
while the full latitudinal wind speed vector is 
$\mathbf{v}(t_i)  = \left(v_1(t_i), \dots, v_{400}(t_i) \right)\in \mathbb{R}^{400}.$ The combined full state can then be written $\mathbf{z}(t_i) = (\mathbf{u}(t_i),\mathbf{v}(t_i))\in \mathbb{R}^{800}$, which is precisely the quantity we seek to predict from the delay state $\mathbf{y}(t_i)$; see \eqref{eq:vec_delay}.

We train models to approximate the state reconstruction map $\mathbf{y}(t_i)\mapsto \mathbf{z}(t_i)$ using both the pointwise reconstruction loss~\eqref{eq:pointwise} and the measure-based reconstruction loss~\eqref{eq:distribution}. Throughout, we normalize the training data by subtracting the temporal mean and ensuring that each feature has unit variance. Recall that we randomly sample $2\times 10^3$ times $t_i$ at which we observe both the delay state $\mathbf{y}(t_i)$ and the full state $\mathbf{z}(t_i)$, allowing us to form the paired data $\{(\mathbf{z}(t_i),\mathbf{y}(t_i))\}_{i=1}^{2000}.$ Thus, when learning the reconstruction map according to the pointwise loss~\eqref{eq:pointwise}, we directly enforce the relationship $\mathbf{y}(t_i) \mapsto \mathbf{z}(t_i)$, for $1\leq i\leq 2\times 10^3$, via the mean squared error.

In contrast, when learning the reconstruction map using the measure-based loss \eqref{eq:distribution}, we transform the paired data $\{(\mathbf{z}(t_i), \mathbf{y}(t_i))\}_{i=1}^{2000}$ into probability measures and aim to find a suitable pushforward map between the probability measures in the time-delayed state space and those in the full state space. We first apply a constrained \texttt{k}-means clustering algorithm to partition the time-delayed samples $\{\mathbf{y}(t_i)\}_{i=1}^{2000} \subseteq \mathbb{R}^{240}$ into 20 distinct clusters, each containing 100 samples in $\mathbb{R}^{240}$. In doing so, we obtain 20 pairs of probability distributions. During training, we then learn a model that pushes forward each measure in the time-delay coordinate system to the corresponding measure in the full state space. We evaluate the measure-based approach \eqref{eq:distribution} using both the Energy MMD loss (with the kernel {$k_{1}$}) and the Gaussian MMD loss (with a Gaussian kernel {$k_{2}$} and $\sigma = 25$).

We train all models using a fully connected neural network with hyperbolic tangent activation function and node sizes $240 \rightarrow 500 \rightarrow 500 \rightarrow 500 \rightarrow 800$. We use the Adam optimizer with a learning rate of $10^{-3}$. As shown in Table~\ref{tab:3}, the pointwise approach has a similar error to the Energy MMD and a lower error than the Gaussian MMD reconstruction after $10^3$ training iterations. However, as training continues, the pointwise approach overfits the data, whereas the measure-based approach using both MMD loss functions remains more robust. 

It is worth noting that choosing a relatively large bandwidth for the Gaussian MMD acts as additional regularization, helping to prevent overfitting to the locations of the training samples. For longer training times, the Gaussian MMD outperforms the Energy MMD in measure-based reconstruction. The reconstructions based on the Gaussian MMD are smoother than the pointwise reconstructions and more closely match the ground truth; a visualization can be found in Appendix~\ref{sec:exps}. In Fig.~\ref{fig:error_comparison}, we show the average spatial distribution of the reconstruction error for the Gaussian MMD and pointwise approaches.

\section{Conclusion}\label{sec:conclusions}
\blue{
This work presents a measure-theoretic variant of the classical Takens embedding theorem. While the original theory focuses on the embedding property of individual time-delayed trajectories, the proposed measure-theoretic generalization shifts the focus to probability distributions over the state space. Through Corollary~\ref{cor:1}, our main theoretical contribution, we have demonstrated that the embedding also holds within the space of probability distributions under the pushforward action of the time-delay map. This generalization is obtained by combining the classical Takens' theorem with geometric tools from the theory of optimal transport.
}

{
Building on these theoretical foundations, we devised a measure-theoretic computational routine for learning the inverse embedding map, also known as the full-state reconstruction map. This method enables the reconstruction of an entire high-dimensional dynamical system state from the time lags of a single observable. Our approach involves partitioning the observed trajectory in time-delay coordinates using a \texttt{k}-means clustering routine, where each cluster represents discrete probability measures. During training, we ensure that the corresponding discrete measures in the reconstruction space match the pushforward distributions of the measures in the time-delay coordinates. This training scheme represents a relaxation of classical pointwise matching, allowing for greater tolerance of pointwise errors in the final reconstruction based on the scale of each discrete measure in the reconstruction space.
}

{
While pointwise matching can be accurate under idealized, noise-free and densely sampled conditions, such settings are uncommon in practice. In scenarios with sparse or noisy observations, our distributional training can offer advantages. On synthetic tests and two geophysical datasets (NOAA sea-surface temperature and ERA5 wind speed) we observe stable reconstructions and competitive accuracy for the learned reconstruction map under challenging conditions.
}

{These results suggest several directions for further work, including integrating measure-theoretic tools with dynamical systems to model, predict, and control systems under uncertainty \citep{yang2023optimal,botvinick2023learning,botvinick2023generative,junge2024entropic,beier2024transfer}. They extend the scope of Takens-style embedding results and contribute to a measure-theoretic perspective on data-driven modeling. We will examine the stability of measure-based loss in future work and position it alongside established results for pointwise learning. This problem is nontrivial: stability depends jointly on the function approximation class, the magnitude and structure of noise, and the choice of distance or divergence on probability measures used in the optimization framework.
}

\bmhead{Acknowledgements}
J.~Botvinick-Greenhouse was supported by a fellowship award under contract FA9550-21-F-0003 through the National Defense Science and Engineering Graduate (NDSEG) Fellowship Program, sponsored by the Air Force Research Laboratory (AFRL), the Office of Naval Research (ONR) and the Army Research Office (ARO). M.~Oprea and Y.~Yang were partially supported by the National Science Foundation through grants DMS-2409855 and by the Office of Naval Research through grant N00014-24-1-2088. R.~Maulik was partially supported by U.S. Department of Energy grants, DE-FOA-0002493 and DE-FOA-0002905.

\section*{Declarations}
\begin{itemize}
\item Funding: J.~Botvinick-Greenhouse was supported by a fellowship award under contract FA9550-21-F-0003 through the National Defense Science and Engineering Graduate (NDSEG) Fellowship Program, sponsored by the Air Force Research Laboratory (AFRL), the Office of Naval Research (ONR) and the Army Research Office (ARO). M.~Oprea and Y.~Yang were partially supported by the National Science Foundation through grants DMS-2409855 and by the Office of Naval Research through grant N00014-24-1-2088. R.~Maulik was partially supported by U.S. Department of Energy grants, DE-FOA-0002493 and DE-FOA-0002905.

\item Competing interests: The authors have no competing interests to declare that are relevant to the content of this article.

\item Ethics approval and consent to participate: This does not apply to this article.
\item Consent for publication: All authors consent to publish this article.
\item Data availability: The data used in this article is available to download from the open-source repositories.
\item Materials availability: All materials related to this work are available to download from open-source domains.

\item Code availability: the code is available at \href{https://github.com/jrbotvinick/Measure-Theoretic-Time-Delay-Embedding}{https://github.com/jrbotvinick/Measure-Theoretic-Time-Delay-Embedding}.

\item Author contribution: J.~B.-G., M.~O., R.~M.~and Y.~Y.~designed the research, J.~B.-G.~and M.~O.~performed research; J.~B.-G.~, M.~O.~and  Y.~Y.~contributed new reagents or analytic tools; J.~B.-G.~and R.~M.~analyzed data; J.~B.-G., M.~O.~and Y.~Y.~wrote the paper; R.~M.~reviewed the paper.
\end{itemize}


\newpage

\begin{appendices}
In the appendices, we provide additional details to support our theoretical results in Section~\ref{sec:measure_embedding} of the main text, as well as our experimental results in Section~\ref{sec:experiments} of the main text. In Appendix~\ref{sec:proofs}, we provide proofs of Proposition \ref{prop:integral_not_changed}, Proposition \ref{prop:change_of_vars}, Lemma \ref{lem:diff_curve}, and Theorem \ref{thm:push_forward_differentiable}. These results are used in the main text to derive Theorem \ref{thm:main_embedding} and Corollary \ref{cor:1}, which are our paper's central theoretical results. In Section \ref{sec:exps}, we provide additional information and visualizations for our numerical experiments appearing in Section \ref{sec:experiments} of the main text.

\section{Proofs from the Main Text}\label{sec:proofs}

\begin{proof}[Proof of Proposition \ref{prop:integral_not_changed}]
    Let $v = P^\rho v + v^\perp = v^{\perp} + v^\top$ by the unique orthogonal decomposition, where $v^\perp \in \text{ker}(P^\rho)$ and $v^\top = P^\rho v \in T_\rho\mathcal{P}(M)$. Since $v^\perp \in \text{ker}(P^\rho)$, 
\begin{equation}\label{eq:proof1} \tag{S1}
    \nabla\cdot (v^\perp \rho) = 0 \implies \ \forall \varsigma \in \mathcal{D}(M): \quad \int_M g_x(\nabla \varsigma, v^\perp)\rd \rho(x) = 0. 
    \end{equation}
     Moreover, note that $\varphi \in \mathcal{D}(M)$, so~\eqref{eq:proof1} holds in particular for $\varsigma = \varphi$. 
     
    \begin{eqnarray*}
        \int_M g_x(\nabla\varphi(x), v)\rd \rho(x) &=& \int_M g_x(\nabla\varphi(x), v^\top + v^\perp)\rd \rho(x) \\
        &=& \int_M g_x(\nabla\varphi(x), P^\rho v)\rd \rho(x) +\int_M g_x(\nabla\varphi(x), v^\perp)\rd \rho(x)\,. 
    \end{eqnarray*}
    The last term in the equation above vanishes due to \eqref{eq:proof1}.
\end{proof}
\begin{proof}[Proof of Proposition \ref{prop:change_of_vars}]
Note that for any function $r: M \rightarrow \mathbb{R}$,  by definition $\nabla r$  is the vector field in $TM$ such that $g(\nabla r , X) = dr(X)$, $\forall X \in TM$. We apply this to $r = \varphi \circ f $:
$$
g_x(\nabla(\varphi \circ f)_x, X_x) = d(\varphi \circ f) (X_x) = d\varphi_{f(x)} (df_x X_x) = d\varphi_{f(x)} Y_{f(x)}\,,
$$
where $Y_{f(x)} := df_x X_x$ is a vector field on $N$. Moreover, since $d\varphi_{f(x)}$ lies in the cotangent bundle on $N$, we have $d\varphi_{y} Y_{y} = q_y(\nabla \varphi_y, Y_y)$. %
By plugging this into the equation above with $y = f(x)$, we obtain~\eqref{eq:change_of_vars}.
\end{proof}

\begin{proof}[Proof of Lemma \ref{lem:diff_curve}]
Consider that $\rho_t$ is an AC curve. Proposition~\ref{prop:vectors} guarantees the existence of a vector field $v_t\in L^2(M, \rho_t)$ along $\rho_t$ such that the continuity equation holds. However, $v_t$ is not necessarily in the tangent space $T_{\rho_t} \mathcal{P}(M) \subset L^2(M, \rho_t)$. For an AC curve $\rho_t$, we instead define $\frac{d}{dt}\rho_t = P^{\rho_t} v_t \in T_{\rho_t}\mathcal{P}(M)$. By the definition of the projection, $\left(\rho_t, \frac{d}{dt}\rho_t\right)$ satisfies the continuity equation. Thus, $\frac{d}{dt}\rho_t \in T_{\rho_t}\mathcal{P}(M)$ is a tangent vector field along the curve $\rho_t$ which proves our result. 
\end{proof}
    \begin{proof}[Proof of Theorem \ref{thm:push_forward_differentiable}]  We proceed in two steps. First, let $\rho_t$ be an AC curve, and $v_t = \frac{d}{dt}\rho_t$ whose existence is guaranteed by Proposition~\ref{prop:vectors}. We then prove that $(F(\rho_t), \widetilde{dF}_{\rho_t}(v_t))$ satisfies the continuity equation and that $\int_0^1 \|\widetilde{dF}_{\rho_t}(v_t)\|_{L^2(F(\rho_t))} \rd t< \infty$, which guarantees that $F$ is AC by Proposition~\ref{prop:vectors}. Secondly, we extend the previous results to the projected version $dF_\rho = P^{F(\rho)}\widetilde{dF_{\rho}}(v)$ which lies in $T_{F(\rho)}\mathcal{P}(N)$ by the definition of the projection operator (see~\eqref{eq:projection}).
        
        For the first part, let $\rho_t$ be an AC curve in $\mathcal{P}(M)$. Then
        \begin{align*}
            \int_0^1 \|\widetilde{dF}_{\rho_t}(v_t)\|_{L^2(F(\rho_t))}\, \rd t &= \int_0^1 \sqrt{\int_N q_y(\widetilde{d F}_{\rho_t}(v_t)(y)\,,\,\widetilde{d F}_{\rho_t}(v_t)(y) )\,  \rd \nu_t(y)}\,\rd t\,, \quad \text{where }\nu_t = F(\rho_t)  \\
            &= \int_0^1 \sqrt{\int_N q_y\big(df_{f^{-1}(y)} (v_t(f^{-1}(y)))\,, \, df_{f^{-1}(y)} (v_t(f^{-1}(y)))\big) \, \rd(f\#\rho_t)(y)}\,\rd t\\
            & = \int_0^1\sqrt{\int_M q_{f(x)}(df_x(v_t(x)), df_x(v_t(x)))\, \rd\rho_t(x)}\,\rd t\\
            & \leq \int_0^1 \sqrt{\int_M \left(\sup_{x \in M }\|df_x\|^2 \right) g_x(v_t(x), v_t(x)) \,\rd\rho_t(x)}\,\rd t \qquad \label{eq:inequality} \tag{S2}\\ 
           & = \sup_{x \in M } \|df_x\| \int_0^1 \|v_t\|_{L^2(\rho_t)}\,\rd t\,.
        \end{align*}
        Since $\sup_{x \in M}\|df_x\| < \infty$, the last term is bounded  as a result of Proposition \ref{prop:vectors}. The inequality~[\ref{eq:inequality}] comes from the definition of the norm $\|df_x\|$ and of the supremum:
           \begin{align*}
             \|df_x\|^2 = \sup_{w \in T_x M} \frac{q_{f(x)}(df_x w, df_x w)}{g_x(w, w)} \implies \\
              \int_M q_{f(x)}(df_x( v_t(x)), df_x( v_t(x)))\, \rd \rho_t(x) &\leq \int_M \|df_x\|^2 g_x(v_t(x), v_t(x)) \, \rd \rho_t(x) \\ &\leq \sup_{x \in M }\|df_x\|^2 \int_M g_x(v_t(x), v_t(x)) \, \rd\rho_t(x)\,.
        \end{align*}
        
        Next, we show that $(F(\rho_t), \widetilde{dF}_{\rho_t}(v_t))$  satisfies the continuity equation~\eqref{eq:continuity_dist}. For any test function $\varphi\in \mathcal{D}(N\times [0,T])$,
        \begin{align}
        &\int_0^1   \int_N \left(\frac{\partial \varphi}{\partial t}(y) + q_{y}\left(\nabla \varphi (y, t),  \widetilde{dF}_{\rho_t}(v_t) (y) \right) \right) \rd \nu_t (y) \rd t\,,\quad \nu_t = F(\rho_t) \label{eq:continuity_N}\tag{S3} \\
           = & \int_0^1 \int_N \left(\frac{\partial \varphi}{\partial t}(y,t) + q_{y}(\nabla \varphi (y, t),  df_{f^{-1}(y)}\left(v_t\left(f^{-1}(y)\right)\right) \right) \rd(f\#\rho_t) (y) \rd t  \nonumber \\
          = &\int_0^1 \int_M \left(\frac{\partial \varphi }{\partial t} (f(x),t) + q_{f(x)}(\nabla \varphi (f(x), t), df_x (v_t(x))) \right) \rd \rho_t(x) \rd t  \nonumber \\
          = & \int_0^1 \int_M \left( \frac{\partial  \widetilde{\varphi}}{\partial t}(x,t) + g_x\left(\nabla \widetilde{\varphi}(x,t), v_t(x)\right) \right) \rd\rho_t(x) \rd t\,,\quad \widetilde{\varphi}(x,t):=\varphi(f(x),t)\,. \label{eq:continuity_M} \tag{S4}
        \end{align}
        We used Proposition~\ref{prop:change_of_vars} to obtain~\eqref{eq:continuity_M}. 
        Moreover, note that~\eqref{eq:continuity_M} is exactly the weak formulation of the continuity equation for $(\rho_t, v_t)$ where the test function is $\widetilde{\varphi}$. We now check if $\widetilde{\varphi} \in \mathcal{D}(M\times [0,T])$. Since $f$ is continuously differentiable and $\varphi \in \mathcal{D} (N\times [0,T])$, we have $\widetilde{\varphi}\in \mathcal{C}^1(M\times [0,T])$. The support of $\widetilde{\varphi}$ is
        $$\text{supp}(\widetilde{\varphi}) = \overline{\{(x,t): \varphi((f(x),t)) \neq 0 \}} \subseteq \overline{ (f\otimes \text{\textit{Id}})^{-1}\left( \text{supp}(\varphi)\right)}\,.
       $$
        Since  $\text{supp}(\varphi)$ is compact and $(f\otimes \text{\textit{Id}})$ is proper, $(f\otimes \text{\textit{Id}})^{-1}\left(\text{supp}(\varphi)\right)$ is also compact. Moreover, supp$(\widetilde{\varphi})$ is a closed subset of a compact set, and hence itself compact. Thus, $\widetilde{\varphi} \in \mathcal{D}(M\times [0,T])$. Since  $(\rho_t, v_t)$ satisfies~\eqref{eq:continuity} by assumption, \eqref{eq:continuity_M} is always zero, which implies that $(F(\rho_t), \widetilde{dF}_{\rho_t}(v_t))$ satisfies the continuity equation~\eqref{eq:continuity_dist}.
        Finally, we turn to $dF_{\rho_t}(v_t)$.
        \begin{align*}
           & \int_0^1   \int_N \left(\frac{\partial \varphi}{\partial t}(y) + q_{y}\left(\nabla \varphi (y, t),  dF_{\rho_t}(v_t) (y)\right) \right) \rd \nu_t(y) \rd t
           \,, \quad \nu_t = F(\rho_t)\\
        = & \int_0^1 \int_N \frac{\partial \varphi}{\partial t}(y) \rd \nu_t (y)  \rd t 
             + \int_0^1 \int_N  q_{y}\left(\nabla \varphi (y, t),  P^{F(\rho_t)}\widetilde{dF}_{\rho_t}(v_t) (y)\right)\rd \nu_t (y)  \rd t \\
             = &
              \int_0^1 \int_N \frac{\partial \varphi}{\partial t}(y) \rd \nu_t (y)  \rd t  + \int_0^1 \int_N q_{y}\big(\nabla \varphi (y, t), \widetilde{dF}_{\rho_t}(v_t) (y)\big)\rd \nu_t (y) \rd t = 0\,,
        \end{align*}
        where we used Proposition~\ref{prop:integral_not_changed}. Thus, $dF_{\rho_t}(v_t)$ also satisfies the continuity equation. 
    \end{proof}

\section{Numerical Experiments}\label{sec:exps}
\subsection{Synthetic Examples}
The equations for the dynamical systems studied in Section \ref{subsec:noisydata} are given by 
$$\label{eq:systems}
 \underbrace{\begin{array}{l}
\dot{x}_1 = a_1(x_2-x_1) \\
   \dot{x}_2 = x_1(a_2 - x_3) - x_2 \\
   \dot{x}_3 = x_1x_2 - a_3 x_3 
\end{array}}_{\text{Lorenz-63}}\,,
 \qquad   \underbrace{\begin{array}{l}
   \dot{x}_1 = -x_2-x_3 \\
   \dot{x}_2 = x_1+ b_1 x_2 \\
   \dot{x}_3 = b_2+x_3(x-b_3)\end{array}}_{\text{R\"ossler}}\,, \qquad
\underbrace{\dot{x}_i = r_ix_i\Bigg( 1 - \sum_{j=1}^N \alpha_{ij} x_j\Bigg)}_{\text{Lotka--Volterra}}.
$$
Their parameters are respectively
$$
a = \begin{bmatrix}
    10\\
    28\\
    8/3
\end{bmatrix}\,,\qquad b = \begin{bmatrix}
    0.1\\
    0.1\\
    14
\end{bmatrix} \,,\qquad r = \begin{bmatrix}
    1\\
    0.72\\
    1.53\\
    1.27
\end{bmatrix}\,,  \qquad \alpha = \begin{bmatrix}
    1 & 1.09 & 1.52 & 0 \\
    0 & 1 & 0.44 & 1.36\\
    2.33 & 0 & 1 & 0.47\\
    1.21 & 0.51 & 0.35 & 1
\end{bmatrix},$$
which are standard values known to produce chaotic trajectories.  In Section \ref{subsec:noisydata} of the main text, we compared the pointwise (see~\eqref{eq:pointwise}) and measure-based (see~\eqref{eq:distribution}) reconstruction schemes for sparse and noisy data coming from these systems. 

 Here, we show another numerical example, but this time, a large amount of clean, noise-free data is used instead. Fig.~\ref{fig:prob_embedding} shows the results in which we perform the full state reconstruction of the same three systems. For this test, we partition the delay state into $100$ cells, which results in $100$ different probability measures for the measure-based loss (see~\eqref{eq:distribution}). It is worth noting that each measure is an empirical distribution based on thousands of samples. To reduce the computational cost, we used mini-batching to reduce the number of samples in each of the $100$ empirical distributions from $5000$ to $50$.
We remark that mini-batch training is not performed in the tests shown in the main text since all examples there are on sparse datasets where the problem instead becomes a lack of data. 

After subdividing the training data,  we then parameterize $\mathcal{R}_{\theta}$ as a two-layer feed-forward neural network with $100$ nodes in each layer and a hyperbolic tangent activation function. We enforce the distributional loss~\eqref{eq:distribution} during training. We use a learning rate of $10^{-3}$. Despite using only $K = 100$ measures for learning $\mathcal{R}_{\theta}$, after $100$ epochs of training, we find that the network {reconstructs} the full state of each dynamical system with high-precision. See Fig.~\ref{fig:2a} for the Lorenz-63 system,  Fig.~\ref{fig:2b} for the R\"ossler system and Fig.~\ref{fig:2c} for the Lotka--Volterra system.
\begin{figure}
    \centering
   \subfloat[(Left) The delay state for the Lorenz-63 system based on the time-series $x(t)$ with $\tau = 0.18$ and $d = 4$. (Right) {Reconstructing} the full state $(x(t),y(t),z(t))$ from the time series $x(t)$.]{ \includegraphics[width = .9\textwidth]{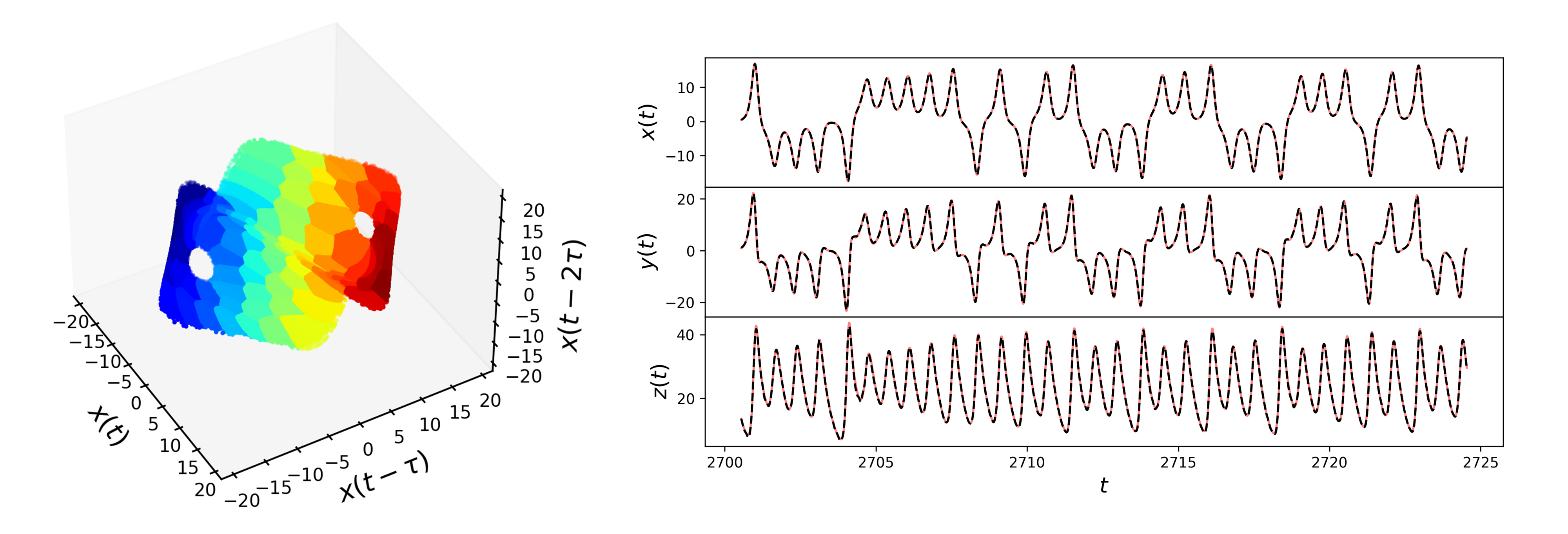}\label{fig:2a}}\\
    \subfloat[(Left) The delay state for the R\"ossler system based on the time-series $x(t)$ with $\tau = 1.44$ and $d = 4$. (Right) {Reconstructing} the full state $(x(t),y(t),z(t))$ from the time series $x(t)$.]{ \includegraphics[width = .9\textwidth]{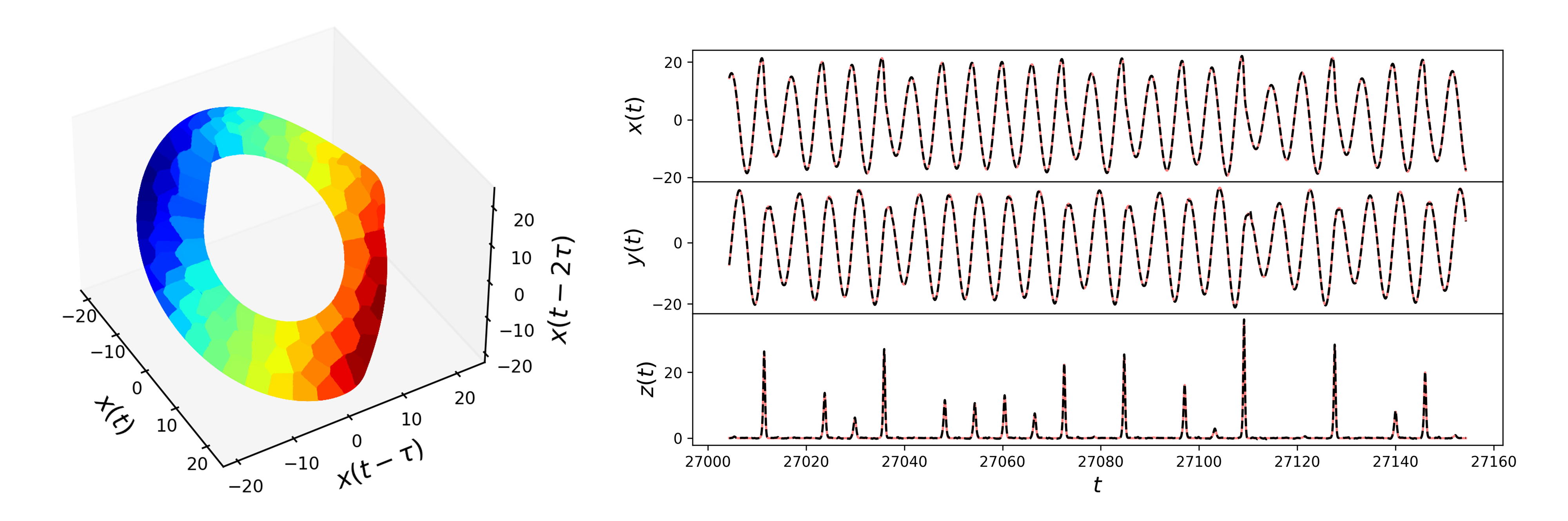}\label{fig:2b}}\\
       \subfloat[(Left) The delay state for the Lotka--Volterra system based on the time-series $x_1(t)$ with $\tau = 6.90$ and $d = 5$. (Right) {Reconstructing} the full state $(x_1(t),x_2(t),x_3(t),x_4(t))$ from $x_1(t).$]{ \includegraphics[width = .9\textwidth]{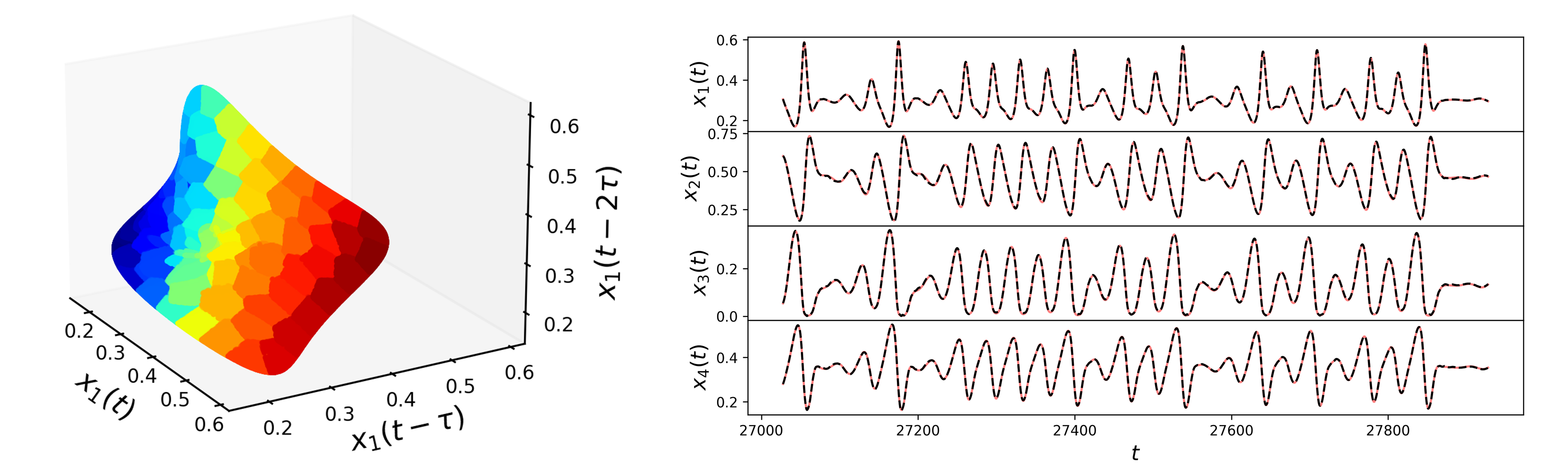}\label{fig:2c}}\\
    \caption{Learning the full-state reconstruction map for three low-dimensional chaotic attractors. The measures $\Phi\# \mu_i$ on the delayed attractor are shown in the left column with each measure colored differently. On the right column, the neural network {reconstruction} during testing is shown as the dotted-black line, whereas the ground truth is in red. In this case, the data is noise-free.}
\end{figure}
\subsection{NOAA SST Reconstruction}
In Section \ref{subsec:realdata}, we performed full state reconstruction on the NOAA SST dataset using both the pointwise and measure-based approaches. Fig.~\ref{fig:SSTData} visualizes the dataset we use to perform the reconstruction. Fig.~\ref{fig:4a} shows a single snapshot of the dataset, as well as the geospatial location at which we collect partial observational data. Fig.~\ref{fig:4b} plots the temperature time series at this location. Figures \ref{fig:4c} and \ref{fig:4d} visualize projections of the input-output measure data used to train the model according to \eqref{eq:distribution}. More specifically, Fig.~\ref{fig:4d} plots the input measures constructed according to the partially observed data in time-delay coordinates, while \ref{fig:4c} shows the output measures in the POD reconstruction space. 
\begin{figure}[h!]
    \centering
   \subfloat[NOAA SST dataset snapshot]{ \includegraphics[width = .49\textwidth]{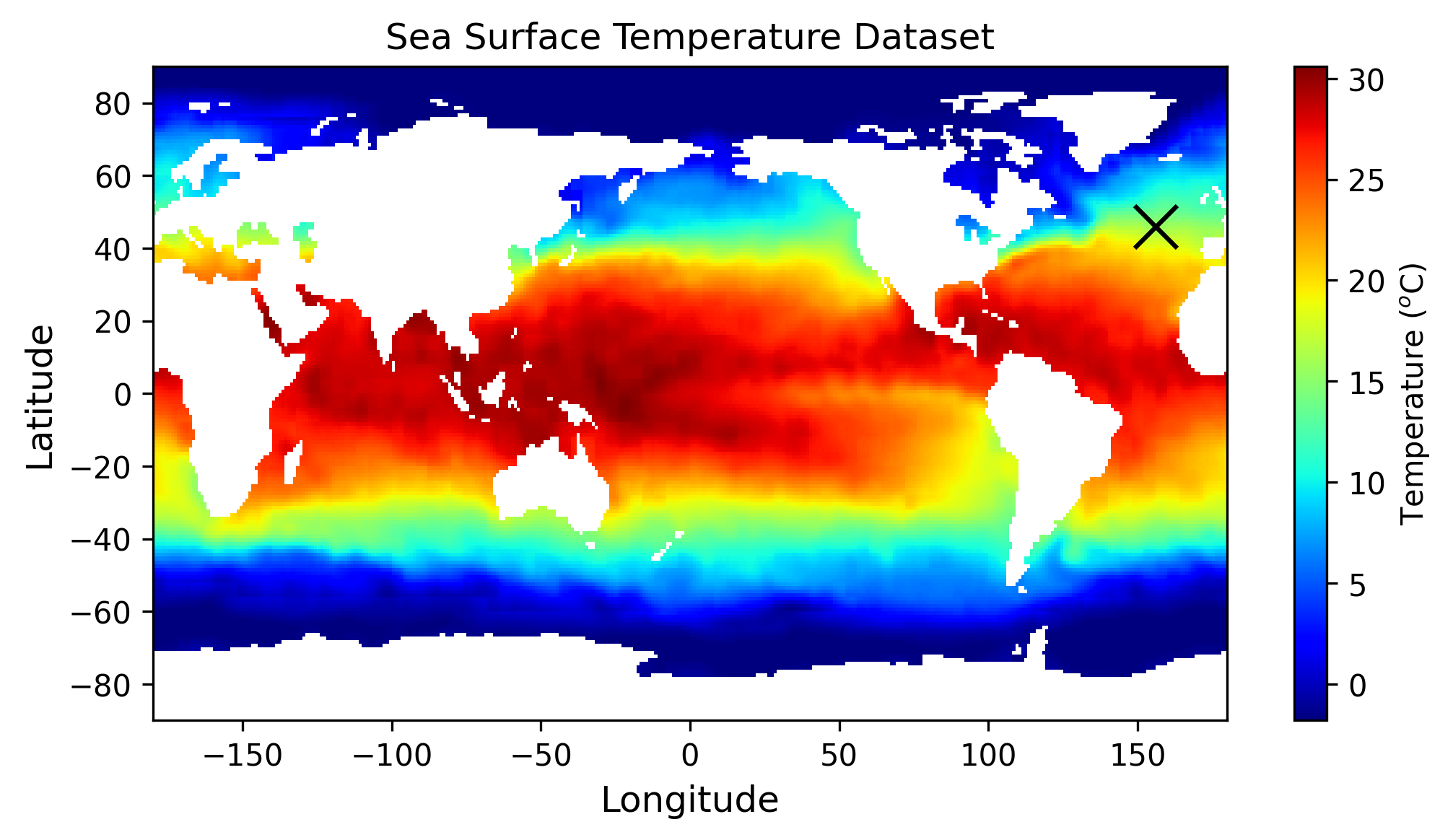} \label{fig:4a}}
    \subfloat[SST observation at $(156 \degree, 40 \degree )$]{ \includegraphics[width = .45\textwidth]{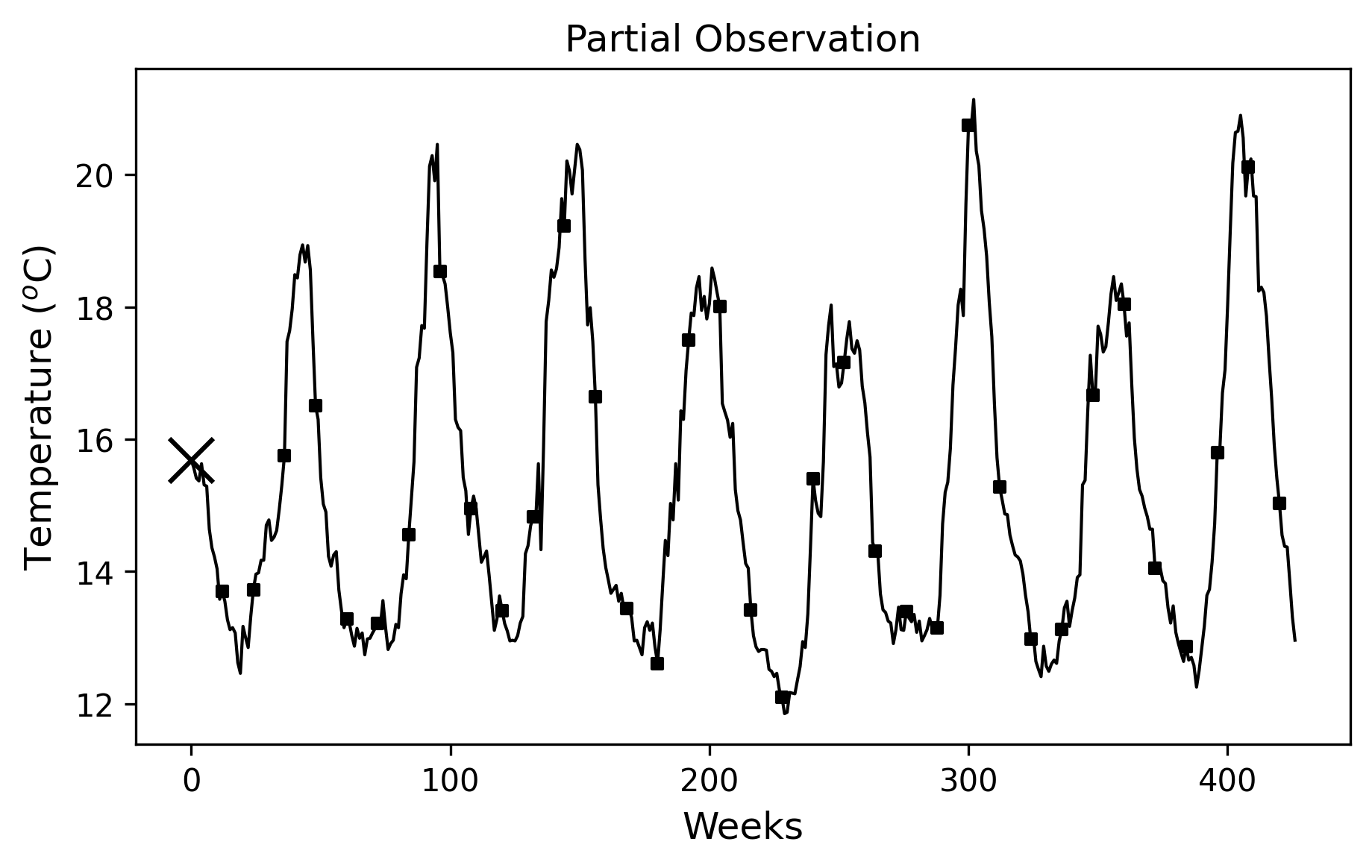}\label{fig:4b}}\\
    \subfloat[Training data in the POD space]{ \includegraphics[width = .35\textwidth]{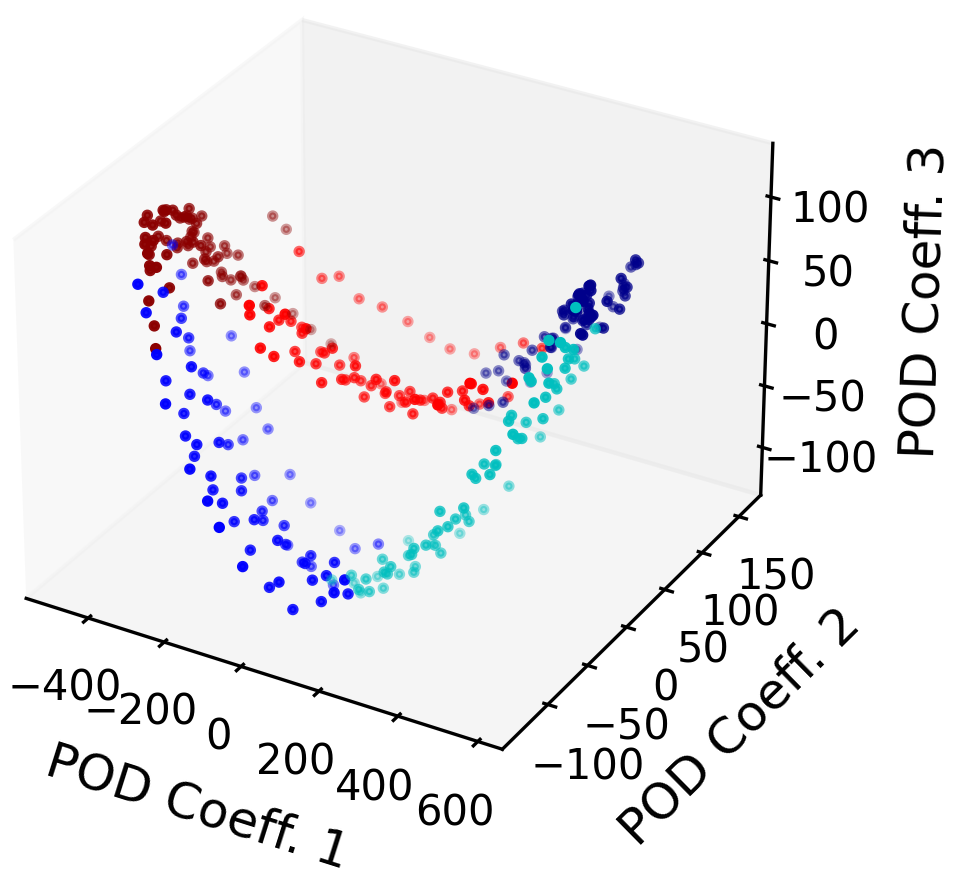}\label{fig:4c}}
    \hspace{1cm}
    \subfloat[Time-delayed observation]{ \includegraphics[width = .35\textwidth]{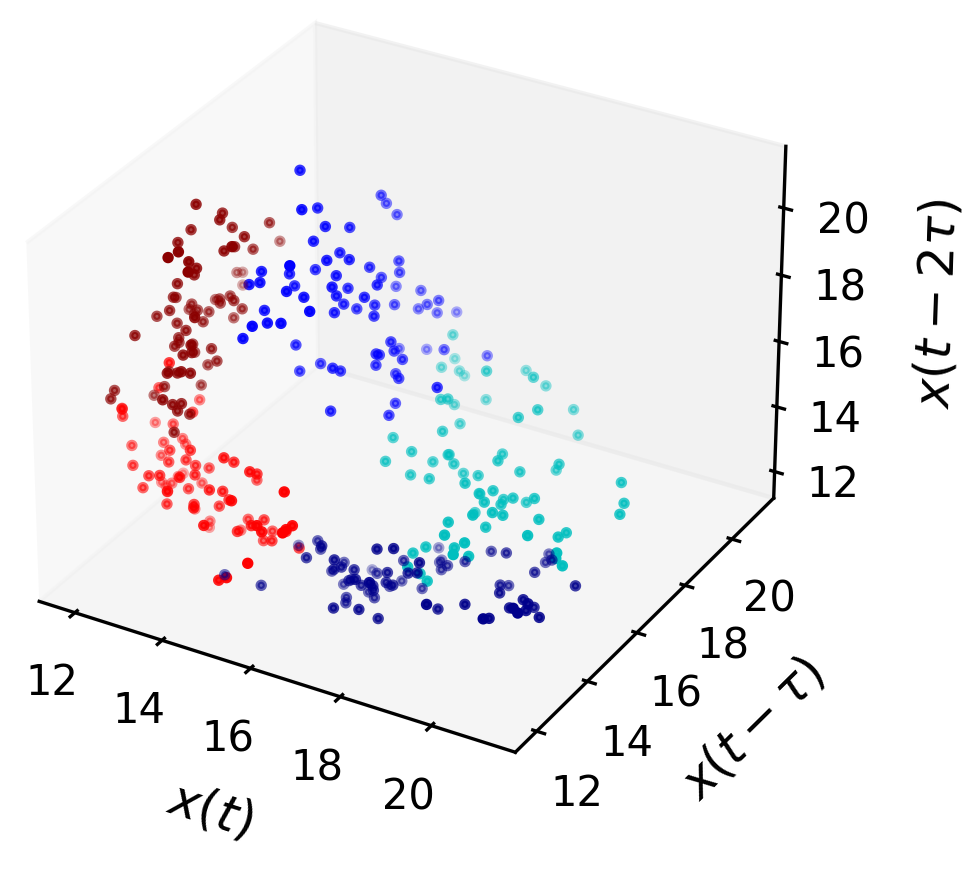}\label{fig:4d}}
    \caption{(a) A single snapshot from the NOAA SST dataset. (b) The time-series from the SST dataset sampled at $(156 \degree, 40 \degree )$, which we regard as our partial observation of the full state. The squares illustrate the time increment of $\tau = 12$ which is used to form the delay coordinates. (c) A three-dimensional projection of the POD coefficients $\{\alpha_k(t_i)\}$ which parameterize the full state. (d) A three-dimensional projection of the delayed time-series in (b). The colors in (c) and (d) reflect the five discrete measures which are used to train our measure-based model. }
    \label{fig:SSTData}
\end{figure}

\subsection{ERA5 Wind Field Reconstruction}
In Section \ref{subsec:ERA5}, we performed full state reconstruction on a portion of the ERA5 wind field dataset. Fig.~\ref{fig:ERA5probes} visualizes a single snapshot of the dataset, including the randomly sampled geospatial locations at which we collect partial observational data, as well as an example wind speed time-series at one of these locations. Fig.~\ref{fig:ERA5_RECON} visualizes results for performing the full-state reconstruction on the dataset using both the pointwise (see~\eqref{eq:pointwise}) and measure-based (see~\eqref{eq:distribution}) approaches. 

\begin{figure}[h!]
    \centering
 \subfloat[ERA5 wind speed snapshot]{ \includegraphics[width = .45\textwidth]{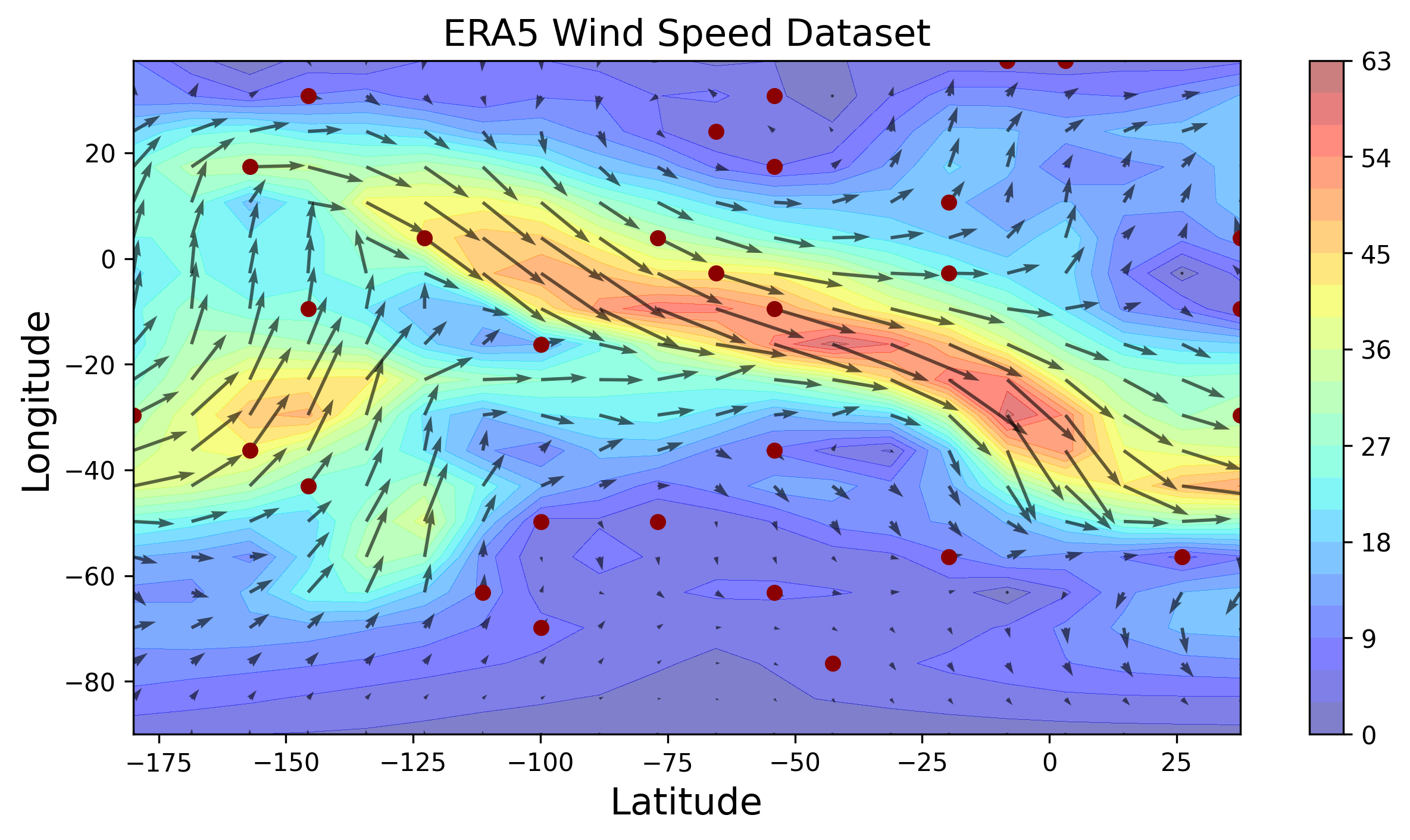}}\label{fig:6a}
   \subfloat[Example trajectory from ERA5]{ \includegraphics[width = .45\textwidth]{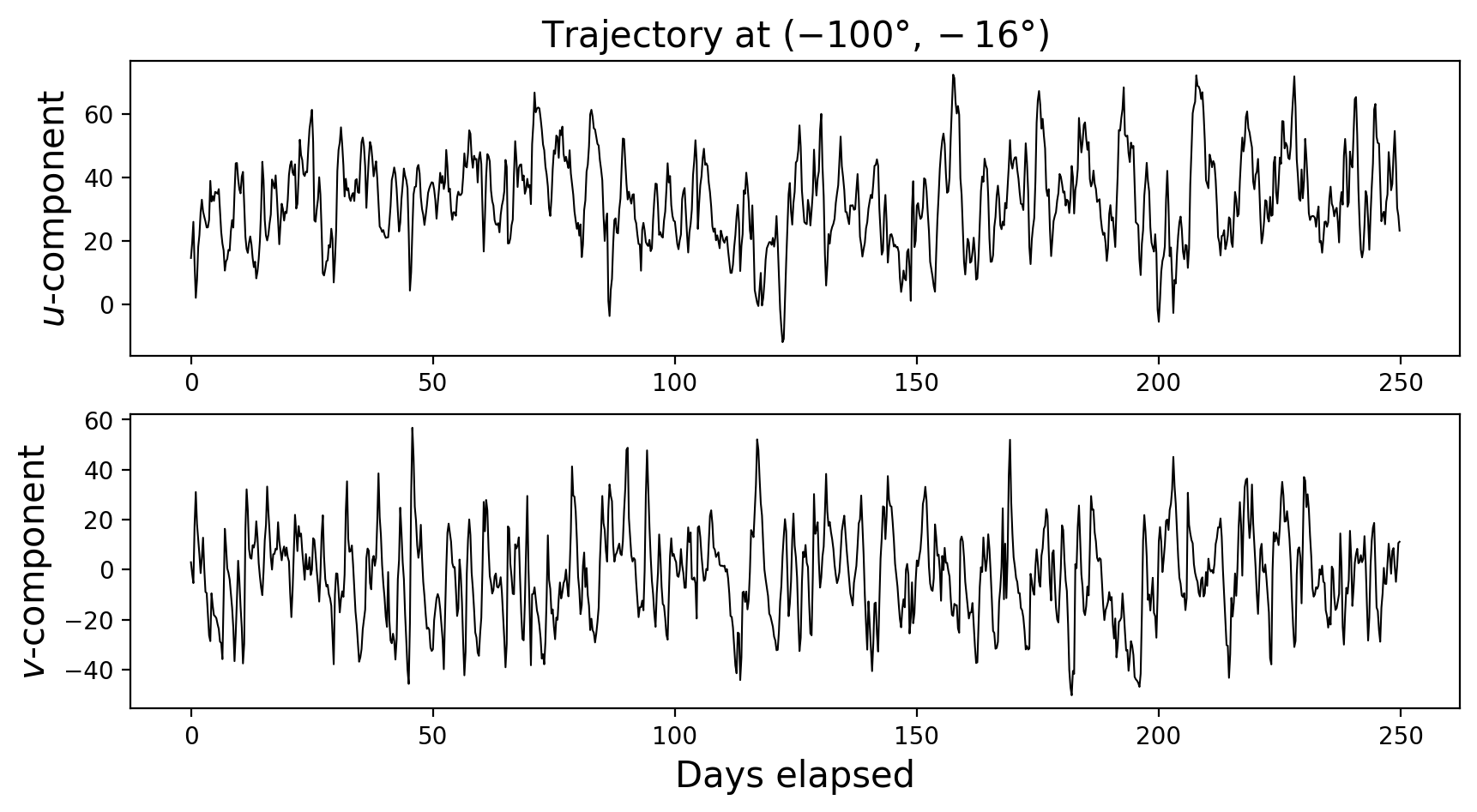}}\label{fig:6b}\\

    \caption{Visualization of the ERA5 wind speed dataset. (Left) A single snapshot in time of the dataset. The vector field indicates the direction of the wind, the contours illustrate the magnitude of the wind speed, and the 30 red circles show the locations at which we obtain partial observations of the wind field. (Right) The time trajectory corresponding to the geospatial location $(-100\degree,-16\degree)$. }
    \label{fig:ERA5probes}
\end{figure}
\begin{figure}
  \centering
   \subfloat[Wind field reconstruction at testing week 0]{ \includegraphics[width = \textwidth]{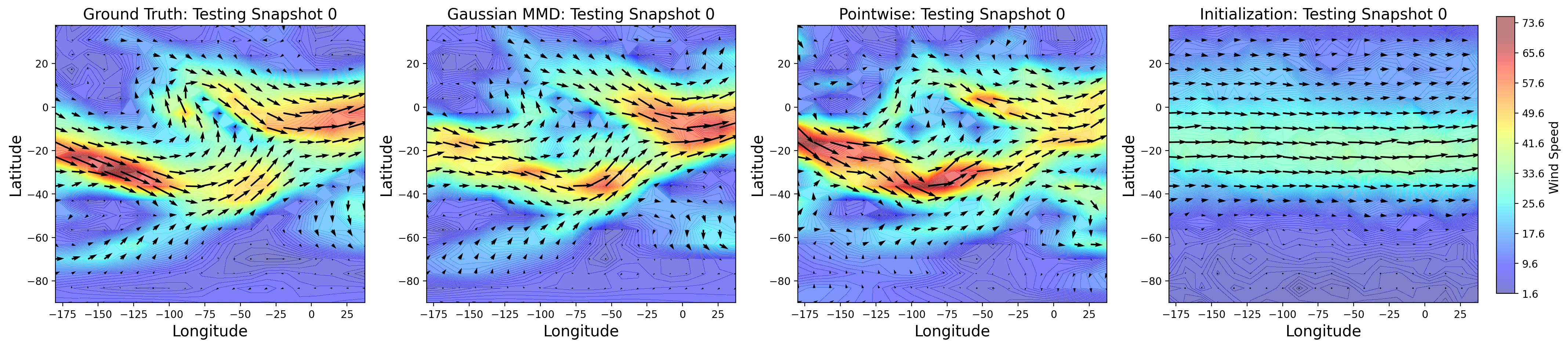}}\\

   \subfloat[Wind field reconstruction at testing week 1500]{ \includegraphics[width = \textwidth]{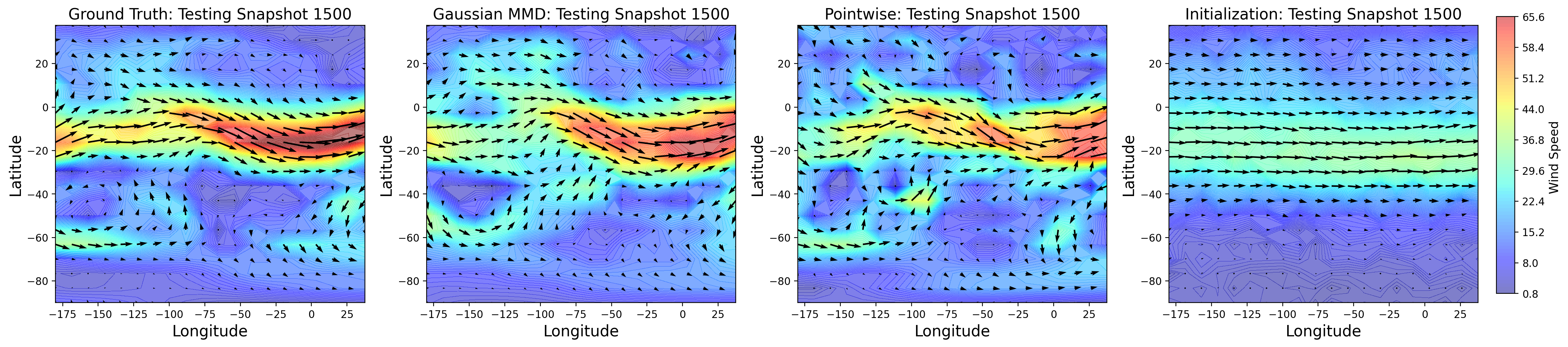}}\\

   \subfloat[Wind field reconstruction at testing week 3000]{ \includegraphics[width = \textwidth]{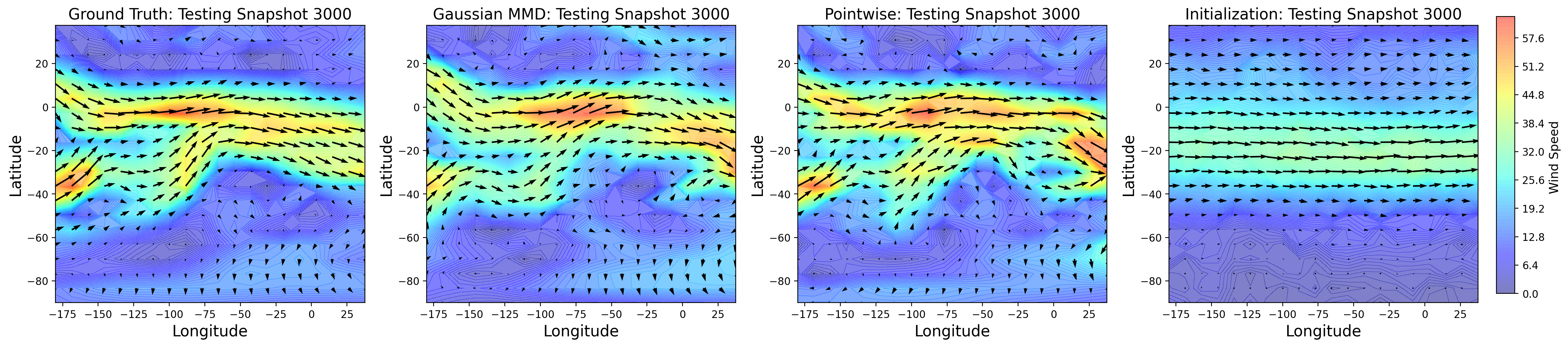}}\\
      \subfloat[Wind field reconstruction at testing week 4500]{ \includegraphics[width = \textwidth]{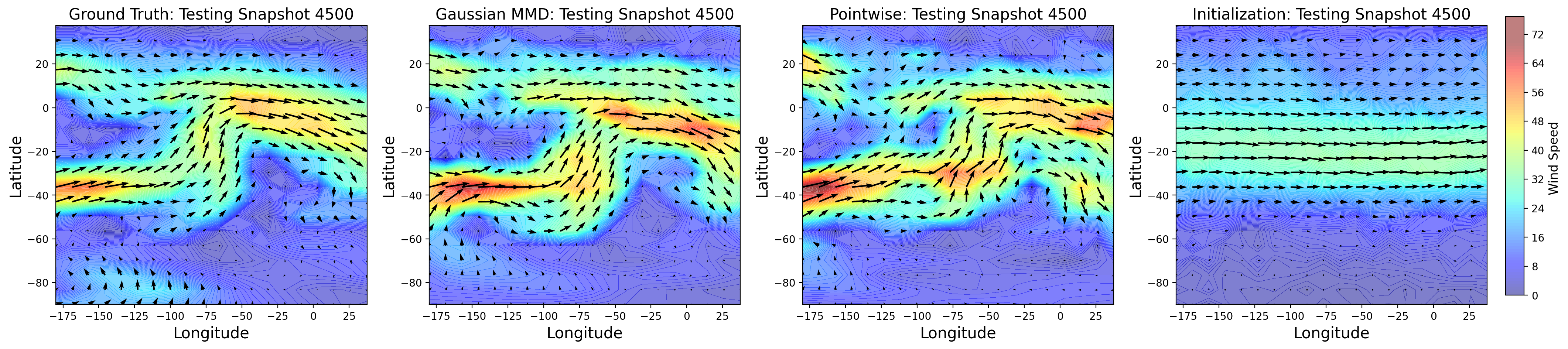}}
    \caption{Visual comparison of the pointwise and measure-based approaches to reconstructing the ERA5 wind dataset after 25000 training iterations. The initialization of the neural networks, shown in the final column, is exactly the same and the only difference among the columns is the loss function used during training.}
\label{fig:ERA5_RECON}
\end{figure}

\end{appendices}

\end{document}